\documentclass[oneside,11pt]{amsart}
\usepackage{amsmath, amsfonts,amsthm,times,graphics}
\usepackage{mathrsfs}
\usepackage[usenames]{color}
\usepackage[active]{srcltx}
 \makeatletter
\renewcommand*\subjclass[2][2000]{%
  \def\@subjclass{#2}%
  \@ifundefined{subjclassname@#1}{%
    \ClassWarning{\@classname}{Unknown edition (#1) of Mathematics
      Subject Classification; using '1991'.}%
  }{%
    \@xp\let\@xp\subjclassname\csname subjclassname@#1\endcsname
  }%
}
 \makeatother

\newtheorem*{ThmA}{Theorem A}
\newtheorem*{ThmB}{Theorem B}
\newtheorem*{ThmC}{Theorem C}
\newtheorem*{ThmD}{Theorem D}
\newtheorem*{ThmE}{Theorem E}
\newtheorem*{ThmF}{Theorem F}
\newtheorem*{ThmG}{Theorem G}
\newtheorem*{ThmH}{Theorem H}
\newtheorem*{ThmI}{Theorem I}

\newtheorem*{ThmK}{Theorem K}
\newtheorem*{ThmL}{Theorem L}
\newtheorem*{ThmM}{Theorem M}
\newtheorem*{ThmN}{Theorem N}
\newtheorem*{ThmO}{Theorem O}

\newtheorem*{LemJ}{Lemma J}

\newtheorem{Thm}{Theorem}[section]
\newtheorem{Cor}[Thm]{Corollary}
\newtheorem{Lem}[Thm]{Lemma}
\newtheorem{Pro}[Thm]{Proposition}
\theoremstyle{definition}
\newtheorem{Def}[Thm]{Definition}

\theoremstyle{remark}
\newtheorem{Rem}[Thm]{\upshape\bfseries Remark}


\numberwithin{equation}{section}

\newcommand{\ee}{\mathrm{e}}

\theoremstyle{definition}

\def\be{\begin{equation}}
\def\ee{\end{equation}}

\newcommand{\ben}{\begin{enumerate}}
\newcommand{\een}{\end{enumerate}}





\newcommand{\br}{\begin{rem}}
\newcommand{\er}{\end{rem}}
\newcommand{\brs}{\begin{rems}}
\newcommand{\ers}{\end{rems}}
\newcommand{\bo}{\begin{obser}}
\newcommand{\eo}{\end{obser}}
\newcommand{\bos}{\begin{obsers}}
\newcommand{\eos}{\end{obsers}}
\newcommand{\bpf}{\begin{pf}}
\newcommand{\epf}{\end{pf}}
\newcommand{\ba}{\begin{array}}
\newcommand{\ea}{\end{array}}
\newcommand{\beq}{\begin{eqnarray}}
\newcommand{\beqq}{\begin{eqnarray*}}
\newcommand{\eeq}{\end{eqnarray}}
\newcommand{\eeqq}{\end{eqnarray*}}



\numberwithin{equation}{section}


\newcounter{minutes}\setcounter{minutes}{\time}
\divide\time by 60
\newcounter{hours}\setcounter{hours}{\time}
\multiply\time by 60 \addtocounter{minutes}{-\time}
\begin{document}
\title{On Riesz type inequalities, Hardy-Littlewood type theorems and smooth moduli}

\author[Shaolin Chen and Hidetaka Hamada]{Shaolin Chen and Hidetaka Hamada}

\address{S. L.  Chen, College of Mathematics and
Statistics, Hengyang Normal University, Hengyang, Hunan 421002,
People's Republic of China; Hunan Provincial Key Laboratory of Intelligent Information Processing and Application,  421002,
People's Republic of China.} \email{mathechen@126.com}

\address{H. Hamada, Faculty of Science and Engineering, Kyushu Sangyo University,
3-1 Matsukadai 2-Chome, Higashi-ku, Fukuoka 813-8503, Japan.}
\email{ h.hamada@ip.kyusan-u.ac.jp}





\maketitle

\def\thefootnote{}
\footnotetext{2010 Mathematics Subject Classification. Primary
42A50, 42B30, 46E15; Secondary 31B05, 31C10, 32U05.}
\footnotetext{Keywords. Riesz type inequality,  Hardy-Littlewood
type Theorem, Smooth moduli}
\makeatletter\def\thefootnote{\@arabic\c@footnote}\makeatother

\begin{abstract}
The  purpose of this paper is to develop some methods  to study
Riesz type inequalities,  Hardy-Littlewood type theorems and smooth
moduli of holomorphic, pluriharmonic  and harmonic functions in
high-dimensional cases. Initially, we  prove some sharp  Riesz type
inequalities of pluriharmonic functions on bounded symmetric
domains. The obtained results extend the main results  in
(\textit{Trans. Amer. Math. Soc.} {\bf 372} (2019)~ 4031--4051).
Furthermore, some Hardy-Littlewood type theorems of holomorphic and
pluriharmonic functions on  John domains are  established.
Additionally, we also discuss the Hardy-Littlewood type theorems and
smooth moduli of holomorphic,
 pluriharmonic  and harmonic functions. Consequently, we improve and generalize the corresponding results
in (\textit{Acta Math.} {\bf 178} (1997)~ 143--167) and
(\textit{Adv. Math.} {\bf 187} (2004)~ 146--172).
\end{abstract}

\maketitle \pagestyle{myheadings} \markboth{ S. L. Chen and H. Hamada }{On Riesz type inequalities, Hardy-Littlewood type theorems and smooth moduli}

\tableofcontents

\section{Introduction}\label{Intro}
Denote by $\mathbb{C}^{n}$  the  complex space of dimension $n$,
where $n$ is a positive integer. We can also interpret
$\mathbb{C}^{n}$ as the real $2n$-space $\mathbb{R}^{2n}$. For
$z=(z_{1},\ldots,z_{n}),~w=(w_{1},\ldots,w_{n})\in\mathbb{C}^{n} $,
the standard
 Hermitian scalar product on $\mathbb{C}^{n}$ and the
Euclidean norm of $z$ are given by $$\langle z,w\rangle :=
\sum_{k=1}^nz_k\overline{w}_k~\mbox{and}~ |z|:={\langle
z,z\rangle}^{1/2}, $$ respectively. For $a\in \mathbb{C}^n$ ($a\in
\mathbb{R}^m$ resp.), we use $\mathbb{B}^n(a,r)=\{z\in
\mathbb{C}^{n}:\, |z-a|<r\}$ ($\mathbf{B}^m(a,r)=\{x\in
\mathbb{R}^{m}:\, |x-a|<r\}$ resp.) to denote the (open) ball of
radius $r>0$ with center $a$, where $m\geq2$ is a positive integer.
Let $\mathbf{S}^{m-1}(a,r):=\partial\mathbf{B}^{m}(a,r)$ be the
boundary of $\mathbf{B}^{m}(a,r)$. Then the boundary of
$\mathbb{B}^{n}(a,r)$ is
$\partial\mathbb{B}^{n}(a,r)=\mathbf{S}^{2n-1}(a,r)$. In particular,
let $\mathbb{B}^{n}:=\mathbb{B}^{n}(0,1)$
($\mathbf{B}^{m}:=\mathbf{B}^{m}(0,1)$ resp.)  and
$\mathbf{S}^{m-1}:=\mathbf{S}^{m-1}(0,1)$.

A two times continuously differentiable real-valued function $u$
defined in an open subset $\mathscr{M}$ of $\mathbb{R}^{m}$ is said
to be harmonic if $\Delta\,u(x)=0$ for all
$x=(x_{1},\ldots,x_{m})\in\mathscr{M}$, where
$$\Delta:=\sum_{j=1}^{m}\frac{\partial^{2}}{\partial\,x^{2}_{j}}$$ is a Laplace operator (see \cite{ABR}). In particular,
a two times continuously differentiable complex-valued function $f$
defined in an open subset $\mathscr{D}$ of $\mathbb{C}^{n}$ is said
to be  pluriharmonic if it satisfies the $n^{2}$ partial
differential equations

$$\frac{\partial^{2}f}{\partial z_{j}\partial\overline{z}_{k}}=0$$ for $j,k\in\{1,\ldots,n\}$ (see \cite{DHK-2011,RU,Ru-1,Vl}).
In \cite{Ru}, Rudin showed that a two times continuously
differentiable complex-valued function $f$ defined in
$\mathbb{B}^{n}$
 is pluriharmonic if and only if $f$ satisfies
$$\Delta f=0~\mbox{and}~\tilde{\Delta}f=0,$$
where
$$\tilde{\Delta}=(1-|z|^{2})\left(\Delta f-4\sum_{j,k=1}^{n}z_{j}\overline{z}_{k}\frac{\partial^{2}f}{\partial z_{j}\partial\overline{z}_{k}}\right)$$
is the  Laplace-Beltrami operator. In a simply connected domain
$\mathscr{D}\subset\mathbb{C}^{n}$, a pluriharmonic function
$f:\,\mathscr{D}\rightarrow \mathbb{C}:=\mathbb{C}^{1}$  has a
representation $f=h+\overline{g},$ where $h$ and $g$ are holomorphic
in $\mathscr{D}$ (see \cite{CH-2022,CHPV,DHK-2011,Vl}); this
representation is unique up to an additive constant. Furthermore,
from this representation, it is easy to know that pluriharmonic
 functions are a broader class of functions than holomorphic functions.
In particular, if $n=1$, then the pluriharmonic function  is
equivalent to the harmonic function.

Throughout of this paper,
 we use the symbol $C$ to denote the various positive
constants, whose value may change from one occurrence to another.

\section{Preliminaries and main results}\label{sec2}
\subsection{The Riesz type inequalities and the Hardy-Littlewood type Theorems}

Let $\Omega\subset\mathbb{C}^{n}$ be a bounded symmetric domain with
origin and Bergman-Silov boundary $b$. Denote by $\Gamma$ the group
of holomorphic automorphisms of $\Omega$, and denote by $\Gamma_{0}$
the isotropy group of $\Gamma$. It is well known that $\Omega$ is
circular and star-shaped with respect to $0$ and that $b$ is
circular. The group $\Gamma_{0}$ is transitive on $b$ and $b$ has a
unique normalized
 $\Gamma_{0}$-invariant measure $\sigma$ with $\sigma(b)=1$ (see \cite{Bo,H-M,Hua,KW,M,Shi, WK}). Obviously, the unit polydisk and the
 unit ball $\mathbb{B}^{n}$ are bounded symmetric domains.
For $p\in(0,\infty]$, the pluriharmonic Hardy space
$\mathscr{PH}^{p}(\Omega)$ consists of all those pluriharmonic
functions $f:~\Omega\rightarrow\mathbb{C}$ such that, for
$p\in(0,\infty)$,
$$\|f\|_{p}:=\sup_{r\in[0,1)}M_{p}(r,f)<\infty,$$ and, for $p=\infty$, $$\|f\|_{\infty}:=\sup_{r\in[0,1)}M_{\infty}(r,f)<\infty,$$
where
$$M_{p}(r,f)=\left(\int_{b}|f(r\zeta)|^{p}d\sigma(\zeta)\right)^{\frac{1}{p}}~\mbox{and}~M_{\infty}(r,f)=\sup_{\zeta\in\,b}|f(r\zeta)|.$$

Suppose that $\omega\in L_{loc}^{1}(\mathbb{R}^{m})$ with $\omega>0$
a.e..
 For $p\in(0,\infty)$, we denote by $L^{p}(E,\omega)$ the space of all
 measurable functions $f$ with

$$\|f\|_{p,E,\omega}=\left(\int_{E}|f(x)|^{p}\omega(x)d\mu(x)\right)^{\frac{1}{p}}<\infty,$$ where
$E\subset\mathbb{R}^{m}$ is a $m$-dimensional Lebesgue measurable
set and
 $d\mu$ is the Lebesgue measure  on $\mathbb{R}^{m}$.
 For the $m$-dimensional Lebesgue measurable set $E$ with
\[
0<\int_{E}\omega(x)d\mu(x)<\infty,
\]let $$f_{E,\omega}=\frac{\int_{E}f(x)\omega(x)d\mu(x)}{\int_{E}\omega(x)d\mu(x)}$$ be the average,
 where $f$ is a measurable function  in $E$.
In particular, we write $\|f\|_{p,E}:=\|f\|_{p,E,\omega}$ and
$f_{E}:=f_{E,\omega}$ when $\omega\equiv1$. We say that $\omega$ is
a Muckenhoupt weight, $\omega\in A_{M}^{q}(\Omega)$, where
$q\in(1,\infty)$ and $M\in[1,\infty)$, if $\omega>0$ a.e. on
$\Omega$, and
$$\|\omega\|_{1,Q}\leq M|Q|^{q}\|\omega\|_{1/(1-q),Q}$$
for each cube or ball $Q$ contained in the domain
$\Omega\subset\mathbb{R}^{m}$.

Let's recall one of the celebrated results on holomorphic functions
in $\mathscr{PH}^{p}(\mathbb{D})$  by Riesz, where
$\mathbb{D}:=\mathbb{B}^{1}$ and $p\in(1,\infty)$.

\begin{ThmA}{\rm (M. Riesz)}\label{M-R}
If a real-valued harmonic function
$u\in\mathscr{PH}^{p}(\mathbb{D})$ for some $p\in(1,\infty)$, then
its harmonic conjugate $v$ with $v(0)=0$ is also of class
$\mathscr{PH}^{p}(\mathbb{D})$. Furthermore, there is a constant
$C$, depending only on $p$, such that

\begin{equation}\label{Riez}
M_{p}(r,v)\leq
CM_{p}(r,u)
\end{equation}

for all $r\in[0,1)$.
\end{ThmA}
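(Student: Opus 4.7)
The plan is to identify $v$ as the harmonic conjugate of $u$ (equivalently, as the Hilbert transform of the boundary values of $u$) and establish the $L^p$-boundedness of this operator by the classical route: $p=2$ by Fourier analysis, even integer $p$ by elementary complex analysis, and general $p\in(1,\infty)$ by interpolation together with duality.

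\textbf{Setup and the $L^2$ case.} Let $F=u+iv$ be the analytic function on $\mathbb{D}$ with $\re F=u$ and $\im F(0)=0$. Since $u\in\mathscr{PH}^p(\mathbb{D})$ with $p>1$, standard Hardy space theory produces boundary values $u^*\in L^p(\mathbb{T})$ together with the Poisson representation $u(re^{i\theta})=(P_r*u^*)(\theta)$. Writing $u(re^{i\theta})=\sum_{n\in\mathbb{Z}}c_n r^{|n|}e^{in\theta}$ with $c_{-n}=\overline{c_n}$, the normalization $v(0)=0$ forces
\[
v(re^{i\theta})=-i\sum_{n\neq 0}\operatorname{sgn}(n)\,c_n\,r^{|n|}e^{in\theta},
\]
and Parseval's identity gives $M_2(r,v)^2\le M_2(r,u)^2$; hence \eqref{Riez} holds with $C=1$ at $p=2$.

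\textbf{Even integers $p=2k$.} Since $F^{2k}$ is holomorphic and $F(0)=u(0)$ is real, the mean-value property gives
\[
u(0)^{2k}=\int_0^{2\pi}F^{2k}(re^{i\theta})\,\frac{d\theta}{2\pi}.
\]
Taking real parts and expanding $(u+iv)^{2k}$ by the binomial theorem isolates the term $(-1)^k v^{2k}$ against cross terms $u^{2k-j}v^j$ with $j<2k$ even. Controlling each cross term by H\"older's inequality yields an algebraic inequality of degree strictly less than $2k$ for the ratio $x=M_{2k}(r,v)/M_{2k}(r,u)$, which forces $x\le C_k$ and establishes \eqref{Riez} at $p=2k$. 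To reach arbitrary $p\in[2,\infty)$, apply Marcinkiewicz (or Riesz--Thorin) interpolation between the $L^2$ and $L^{2k}$ estimates for $2k>p$. For $p\in(1,2)$, use duality: the conjugate-function operator has adjoint $-H$ on $L^2$, so its $L^p$ and $L^{p'}$ operator norms coincide, transferring the bound at $p'>2$ to $p<2$. The move from boundary to interior radii is automatic, since $v_r=H(P_r*u^*)=H u_r$, giving $M_p(r,v)=\|H u_r\|_{L^p(\mathbb{T})}\le C_p\|u_r\|_{L^p(\mathbb{T})}=C_p M_p(r,u)$.

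\textbf{Main obstacle.} The technically delicate point is the bookkeeping in the even-integer step: one has to track signs and binomial coefficients so that the inequality $\|v\|_{2k}^{2k}\le A_k\,|u(0)|^{2k}+\sum_{j<2k,\,j\text{ even}}B_{k,j}\|u\|_{2k}^{2k-j}\|v\|_{2k}^{j}$ is non-vacuous, which in turn forces the desired polynomial bound on $x$. Once this is in place, the interpolation and duality steps are routine, at the cost of yielding a constant $C_p$ that blows up as $p\to 1^+$ or $p\to\infty$ -- consistent with the well-known failure of the theorem at the endpoints. Sharper constants (e.g.\ Pichorides' optimal bound) would require a different, subharmonic-function argument in place of interpolation, but the qualitative boundedness needed for Theorem A follows from the route above.
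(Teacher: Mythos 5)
The paper states Theorem A as a classical result of M.~Riesz and offers no proof of it, so there is nothing to compare against on the paper's side; it is used as a black box (e.g.\ via the refinements (\ref{Riez-1}) and (\ref{Riez-2})) in the proof of Theorem \ref{thm-1}. Your argument is a correct rendition of the standard textbook proof: Parseval at $p=2$; the mean-value identity $u(0)^{2k}=\frac{1}{2\pi}\int_0^{2\pi}\re\bigl(F^{2k}(re^{i\theta})\bigr)\,d\theta$ plus H\"older at even integers, which yields $x^{2k}\leq 1+\sum_{j\leq 2k-2}\binom{2k}{j}x^{j}$ for $x=M_{2k}(r,v)/M_{2k}(r,u)$ and hence a bound on $x$; Riesz--Thorin to fill in $2<p<2k$; and skew-adjointness of the conjugation operator to transfer the bound to $1<p<2$. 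The only points worth making explicit in a full write-up are (i) that for each fixed $r<1$ the functions $u_r,v_r$ are smooth, so all the means are finite and the algebraic manipulation (dividing by $M_{2k}(r,u)^{2k}$, treating the degenerate case $u\equiv 0$ separately) is legitimate without any boundary-value theory, and (ii) that the interpolation and duality steps should be run on the multiplier operator $e^{in\theta}\mapsto -i\,\mathrm{sgn}(n)e^{in\theta}$ acting on trigonometric polynomials, after which $v_r=Hu_r$ converts the operator bound into (\ref{Riez}) radius by radius. With those caveats the proof is complete and correct, though as you note it does not produce the sharp constant $\cot(\pi/2p^{*})$ of Pichorides that the paper actually invokes later.
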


By replacing $\|\cdot\|_{p}$ with $\|\cdot\|_{p,\mathbb{D}}$ in
Theorem A, Hardy and Littlewood  \cite{H-L} proved the following
result for $p\in(0,\infty)$.

\begin{ThmB}\label{ThmA}
For each $p\in(0,\infty)$, there exists a constant $C$, depending
only on $p$, such that
$$\|u-u(0)\|_{p,\mathbb{D}}\leq C\|v\|_{p,\mathbb{D}}$$
for all real valued harmonic functions $u$ and $v$ in the unit disk
$\mathbb{D}$ such that $u+iv$ is holomorphic  in $\mathbb{D}$, where
$d\mu$ is the Lebesgue area measure on $\mathbb{D}$.
\end{ThmB}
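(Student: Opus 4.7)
The plan is to split the proof by the range of $p$. For $p > 1$, Theorem B follows directly from Theorem A by integrating in the radial variable. For $0 < p \leq 1$, Theorem A fails (already at $p = 1$ by Kolmogorov's example), and an extra subharmonicity/Littlewood--Paley argument becomes necessary.

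For $p \in (1, \infty)$: since $F = u + iv$ is holomorphic on $\mathbb{D}$, so is $iF = -v + iu$. Viewing $-v$ as the real-valued harmonic function whose harmonic conjugate normalized at the origin is $u - u(0)$, Theorem A yields $M_p(r, u - u(0)) \leq C\, M_p(r, v)$ for every $r \in [0,1)$, with $C = C(p)$ (to handle the fact that Theorem A is stated for $\mathscr{PH}^p(\mathbb{D})$ functions, one applies it, for each fixed $r$, to the dilate $z \mapsto F(rz)$). Because Lebesgue area measure on $\mathbb{D}$ decomposes as $dA = r\, dr\, d\theta$, one has the general identity $\|f\|_{p, \mathbb{D}}^p = 2\pi \int_0^1 r\, M_p^p(r, f)\, dr$. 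Raising the pointwise bound to the $p$-th power and integrating against $2\pi r\, dr$ then gives $\|u - u(0)\|_{p, \mathbb{D}} \leq C \|v\|_{p, \mathbb{D}}$ at once.

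For $p \in (0, 1]$, I would work with $G := F - F(0)$, which is holomorphic on $\mathbb{D}$ with $G(0) = 0$, $\operatorname{Re} G = u - u(0)$ and $\operatorname{Im} G = v - v(0)$. Since $|G|^p$ is subharmonic for every $p > 0$, the sub-mean-value inequality controls $|G(z)|^p$ pointwise by averages of $|G|^p$ over Euclidean disks of radius proportional to $1 - |z|$. Combined with the Cauchy--Riemann identity $|G'| = |\nabla u| = |\nabla v|$ and the Littlewood--Paley-type characterization
\[
\int_\mathbb{D} |G|^p\, dA \asymp |G(0)|^p + \int_\mathbb{D} |G|^{p-2}\, |G'|^2\, (1-|z|^2)^2\, dA,
\]
the area integral of $|u - u(0)|^p$ can be replaced by the right-hand quantity, which is symmetric in $u$ and $v$, and then majorized by the analogous expression for $v - v(0) = \operatorname{Im} G$. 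One then finishes by bounding $\|v - v(0)\|_{p, \mathbb{D}} \leq \|v\|_{p, \mathbb{D}} + \pi^{1/p} |v(0)|$, with $|v(0)|$ controlled by $\|v\|_{p, \mathbb{D}}$ via subharmonicity of $|F - u(0)|^p$.

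The main obstacle lies squarely in the range $0 < p \leq 1$. Without Theorem A, one cannot control $u - u(0)$ by $v$ circle by circle, and the entire argument must exploit the extra averaging in $r$ built into the area-integral norm. Implementing the Littlewood--Paley representation with a constant depending only on $p$ near $p = 1$, where the exponent $p - 2$ approaches $-1$ and the integrand $|G|^{p-2}|G'|^2$ becomes delicate near the zero set of $G$, is the technical heart of the proof.
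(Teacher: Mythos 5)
The paper does not prove Theorem B at all: it is quoted as a classical result of Hardy and Littlewood \cite{H-L}, so there is no in-paper argument to compare against and your proposal must stand on its own. Your treatment of the range $p\in(1,\infty)$ does stand: applying Theorem A to $iF$ (so that $u-u(0)$ is the normalized conjugate of $-v$), using dilates to avoid the $\mathscr{PH}^p$ hypothesis, and integrating $M_p^p(r,\cdot)$ against $2\pi r\,dr$ is correct and complete.

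The range $0<p\le 1$, however, contains a genuine gap exactly where you locate "the technical heart," and the route you sketch does not close it. After passing from $\int_{\mathbb{D}}|u-u(0)|^p\,dA$ to $|G(0)|^p+\int_{\mathbb{D}}|G|^{p-2}|G'|^2(1-|z|^2)^2\,dA$ and using $|G|^{p-2}\le|v-v(0)|^{p-2}$ (valid since $p-2<0$), you are left needing the one-sided Hardy--Stein inequality
\[
\int_{\mathbb{D}}|v-v(0)|^{p-2}\,|\nabla v|^{2}\,(1-|z|^2)^{2}\,dA\;\le\;C\int_{\mathbb{D}}|v-v(0)|^{p}\,dA
\]
for the \emph{real harmonic} function $v-v(0)$ with $p\le 1$. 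This is not a known "analogous expression" you can invoke: for a real harmonic $h$ and $0<p<1$ one has $\Delta|h|^p=p(p-1)|h|^{p-2}|\nabla h|^2+\mu_s$ with $p(p-1)<0$ and a nonnegative singular part $\mu_s$ supported on the zero set of $h$, so Green's identity expresses the left-hand quantity in terms of $\mu_s$, which is not controlled by $\|h\|_{p,\mathbb{D}}^p$; the subharmonicity that drives the holomorphic Littlewood--Paley identity is simply absent. (A secondary, fixable issue: you justify $|v(0)|^p\lesssim\|v\|_{p,\mathbb{D}}^p$ via subharmonicity of $|F-u(0)|^p$, but that controls $|v(0)|$ by the area integral of $((u-u(0))^2+v^2)^{p/2}$ — the very quantity you are trying to bound — and is circular; the correct tool is the Hardy--Littlewood lemma $|v(0)|\le C_p\bigl(|\mathbf{B}|^{-1}\int_{\mathbf{B}}|v|^p\bigr)^{1/p}$ for harmonic functions.) A workable replacement for your second step is the derivative characterization $\|G\|_{A^p}^p\asymp|G(0)|^p+\int_{\mathbb{D}}|G'(z)|^p(1-|z|^2)^p\,dA$ combined with the pointwise estimate $|\nabla v(z)|(1-|z|)\le C_p\bigl(|\mathbf{B}_z|^{-1}\int_{\mathbf{B}_z}|v|^p\,dA\bigr)^{1/p}$ on $\mathbf{B}_z:=\mathbb{B}^1(z,(1-|z|)/2)$ and Fubini, which avoids the negative power $|G|^{p-2}$ altogether.
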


In \cite{Pi}, Pichorides  improved (\ref{Riez}) into the following
sharp form.
\begin{equation}\label{Riez-1}
\|v\|_{p}\leq\cot\frac{\pi}{2p^*}\|u\|_{p},
\end{equation} where $p^*=\max\{p,~p/(p-1)\}$. Later, Verbitsky
\cite{Ve} further improved  (\ref{Riez-1}) and obtained the
following sharp inequalities
\begin{equation}\label{Riez-2}\frac{1}{\cos\frac{\pi}{2p^*}}\|v\|_{p}\leq\|f\|_{p}\leq\frac{1}{\sin\frac{\pi}{2p^*}}\|u\|_{p},
\end{equation}
where $f=u+iv$ is holomorphic.  As an analogy of Theorem A, Kalaj
\cite{K-2019} established the following sharp Riesz type
inequalities for complex-valued harmonic functions in
$\mathscr{PH}^{p}(\mathbb{D})$ with $p\in(1,\infty)$.  

\begin{ThmC}{\rm (\cite[Theorems 2.1 and 2.3]{K-2019})}\label{Trm-2019}
Let $p\in(1,\infty)$ be a constant and
$f=h+\overline{g}\in\mathscr{PH}^{p}(\mathbb{D})$, where $h$ and $g$
are holomorphic functions in $\mathbb{D}$.

\begin{enumerate}
\item[{\rm ($\mathscr{A}_{1}$)}]  If $ {\rm Re}(g(0)h(0))\geq0$, then,
$$\left(\int_{0}^{2\pi}\big(|h(e^{i\theta})|^{2}+
|g(e^{i\theta})|^{2}\big)^{\frac{p}{2}}\frac{d\theta}{2\pi}\right)^{\frac{1}{p}}\leq
\frac{1}{C_{1}(p)} \|f\|_{p},$$ where
$C_{1}(p)=\sqrt{1-\big|\cos\frac{\pi}{p}\big|}$ and  $``{\rm Re}"$
denotes the real part of a complex number.
\item[{\rm ($\mathscr{A}_{2}$)}] If ${\rm Re}(g(0)h(0))\leq0$, then,
$$\|f\|_p\leq
C_{2}(p)\left(\int_{0}^{2\pi}(|h(e^{i\theta})|^{2}+
|g(e^{i\theta})|^{2})^{\frac{p}{2}}\frac{d\theta}{2\pi}\right)^{\frac{1}{p}},$$
where
$C_{2}(p)=\sqrt{2}\max\left\{\sin\frac{\pi}{2p},~\cos\frac{\pi}{2p}\right\}$.
\end{enumerate}
\end{ThmC}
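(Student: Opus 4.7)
The plan is to recast both assertions via the holomorphic auxiliary function $F(z) := h(z)g(z)$. Expanding the modulus squared gives the pointwise identity
\begin{equation*}
|f(z)|^2 = |h(z)+\overline{g(z)}|^2 = |h(z)|^2+|g(z)|^2+2\,\mathrm{Re}\,F(z),
\end{equation*}
so that $|h|^2+|g|^2 = |f|^2-2\,\mathrm{Re}\,F$. The mean value property of the holomorphic $F$ gives $\int_0^{2\pi}\mathrm{Re}\,F(e^{i\theta})\,d\theta/(2\pi) = \mathrm{Re}(h(0)g(0))$, so the sign hypotheses of $(\mathscr{A}_1)$ and $(\mathscr{A}_2)$ translate directly into sign conditions on the $L^1$-mean of $\mathrm{Re}\,F$. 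Introducing the companion harmonic function $\tilde f(z) := h(z)-\overline{g(z)}$ (whose holomorphic part coincides with that of $f$), a direct computation yields the identity $|f|^2+|\tilde f|^2 = 2(|h|^2+|g|^2)$, so each side of the theorem becomes a sharp $L^p$ comparison between $\|f\|_p$ and $\|(|f|^2+|\tilde f|^2)^{1/2}\|_p$.

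A short trigonometric manipulation rewrites the constants as $C_1(p) = \sqrt 2\,\sin(\pi/(2p^*))$ and $C_2(p) = \sqrt 2\,\cos(\pi/(2p^*))$, where $p^* := \max\{p, p/(p-1)\}$, so in particular $C_1(p)^2+C_2(p)^2 = 2$. These are exactly the sharp constants in the Pichorides--Verbitsky circle of results (\ref{Riez-1})--(\ref{Riez-2}), so the strategy is to reduce both bounds to a sharp inequality for a suitable holomorphic function. Concretely, I would apply (\ref{Riez-2}) to $F$ at exponent $p/2$, combined with the pointwise AM--GM bound $2|F|\leq|h|^2+|g|^2$ and the sign information on the mean of $\mathrm{Re}\,F$. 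For $(\mathscr{A}_1)$, the hypothesis $\mathrm{Re}\,F(0)\geq 0$ lets one absorb the nonnegative mean of $\mathrm{Re}\,F$ when passing from $\|(|h|^2+|g|^2)^{1/2}\|_p$ to $\|f\|_p$. Dually, for $(\mathscr{A}_2)$, the hypothesis $\mathrm{Re}\,F(0)\leq 0$ allows the negative contribution of $2\,\mathrm{Re}\,F$ to $|f|^2$ to be absorbed in the reverse direction, producing $C_2(p)$. The case $p=2$ serves as a sanity check: both bounds collapse to the $L^2$-orthogonality identity $\|f\|_2^2 = \|h\|_2^2+\|g\|_2^2+2\,\mathrm{Re}(h(0)g(0))$, and indeed $C_1(2) = C_2(2) = 1$.

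The main obstacle I anticipate is attaining the sharp constants $\sqrt 2\sin(\pi/(2p^*))$ and $\sqrt 2\cos(\pi/(2p^*))$, rather than the weaker $\sqrt 2$ that follows from the trivial pointwise estimate $|f| \leq |h|+|g| \leq \sqrt 2\,(|h|^2+|g|^2)^{1/2}$. This demands careful bookkeeping of the extremal functions of (\ref{Riez-2}) --- conformal maps of $\mathbb{D}$ onto a sector, of the form $F_0(z) \propto \bigl((1+z)/(1-z)\bigr)^{2/p^*}$ --- and verifying that equality is attained (up to rotation) by a pair $(h, g)$ for which $hg$ is a rotate of $F_0$, thereby pinning the constants down. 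A secondary subtlety is the case split between $p\geq 2$ and $1<p<2$, where $p^*$ flips between $p$ and $p/(p-1)$, mirroring the analogous split in Verbitsky's original theorem.
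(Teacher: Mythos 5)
First, note that the paper does not prove this statement at all: Theorem~C is quoted verbatim from \cite[Theorems 2.1 and 2.3]{K-2019} and used as a black box in the proof of Theorem \ref{thm-1}, so there is no in-paper argument to compare against. Judged on its own terms, your proposal has correct preliminary identities (the expansion $|f|^2=|h|^2+|g|^2+2\,\mathrm{Re}(hg)$, the companion identity with $\tilde f=h-\overline{g}$, and the rewriting $C_1(p)=\sqrt2\sin\frac{\pi}{2p^*}$, $C_2(p)=\sqrt2\cos\frac{\pi}{2p^*}$ all check out), but the central step --- deducing the sharp $L^p$ comparison from Verbitsky's inequality applied to $F=hg$ at exponent $p/2$ --- is not carried out, and the natural implementation fails. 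Writing $S=|h|^2+|g|^2$ and combining the reverse triangle inequality in $L^{p/2}$ with the AM--GM bound $2|F|\le S$ and the bound $\|\mathrm{Re}\,F\|_{p/2}\le\cos\frac{\pi}{2(p/2)^*}\|F\|_{p/2}$ yields $\|S+2\,\mathrm{Re}\,F\|_{p/2}\ge\bigl(1-\cos\frac{\pi}{2(p/2)^*}\bigr)\|S\|_{p/2}$, which for $2\le p<4$ equals $1-\sin\frac{\pi}{p}$ rather than the required $C_1(p)^2=1-\cos\frac{\pi}{p}$ (and degenerates to $0$ at $p=2$, where the claim is a trivial orthogonality identity). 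For $1<p<2$ the exponent $p/2$ drops below $1$, where (\ref{Riez-2}) is simply unavailable (the M.~Riesz theorem fails for exponents $\le 1$). A further obstruction is normalization: the sharp form of (\ref{Riez-2}) for $\mathrm{Re}\,F=\mathrm{Im}(iF)$ requires $\mathrm{Re}\,F(0)=0$, whereas your hypothesis only gives $\mathrm{Re}(g(0)h(0))\ge 0$ or $\le 0$. Finally, ``bookkeeping of extremal functions'' identifies candidates for equality but proves nothing about the inequality itself.

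For what it is worth, the actual proof in \cite{K-2019} does not post-process the scalar Riesz--Verbitsky inequality for $hg$; it follows the Ess\'en--Verbitsky plurisubharmonic-minorant method, establishing a sharp pointwise/subharmonicity statement (of the type that $(|h|^2+|g|^2)^{p/2}-C|h+\overline{g}|^p$ admits a subharmonic majorant or minorant with the right value at the origin --- compare the citation of \cite[Lemma 2.13]{K-2019} in the present paper) and then integrating. If you want to complete a proof along your lines, you would need to replace the triangle-inequality step by such a pointwise sharp inequality; as written, the argument does not close.
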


Whether there is a Riesz type theorem in high-dimensional space
 has always been a challenging problem (see \cite[p.167-172]{FS-1972}).
Let $u=(u_{0},u_{1},\ldots,u_{n})$ be a vector valued harmonic
function
 in $(n+1)$-dimensional upper half space
$\mathbb{R}_{+}^{n+1}$ satisfying the Cauchy-Riemann system

$$\sum_{j=0}^{n}\frac{\partial\,u_{j}}{\partial\,x_{j}}=0,~
\frac{\partial\,u_{k}}{\partial\,x_{j}}=\frac{\partial\,u_{j}}{\partial\,x_{k}},~0\leq\,k,j\leq\,n.$$
 By
analogy with (\ref{Riez-2}), Fefferman and Stein showed that
$u\in\,L^{p}$ if and only if $u_{0}\in\,L^{p}$. They also used the
non-tangential maximal function to establish a Riesz type Theorem in
$\mathbb{R}_{+}^{n+1}$ (see \cite[Theorem 9]{FS-1972} and
\cite{FS-2020}). The first aim of this paper is to establish some
Riesz type inequalities  of pluriharmonic functions on bounded
symmetric domains, which extend Theorem C.

\begin{Thm}\label{thm-1}
Let $\Omega\subset\mathbb{C}^{n}$ be a bounded symmetric domain with
origin and Bergman-Silov boundary $b$, and let $p\in[1,\infty]$ be a
given constant. Let $f=h+\overline{g}$, where $h$ and $g$ are
holomorphic functions in $\Omega$. Then the following statements
hold.

\begin{enumerate}
\item[{\rm ($\mathscr{B}_{1}$)}] If $p\in(1,\infty)$, then $f=h+\overline{g}\in\mathscr{PH}^{p}(\Omega)$ if and only if $h,~g\in\mathscr{PH}^{p}(\Omega)$;
Moreover, \begin{enumerate}
\item[{\rm ($\mathscr{B}_{1}{\rm(I)}$)}]  If $f=h+\overline{g}\in\mathscr{PH}^{p}(\Omega)$ and $ {\rm Re}(g(0)h(0))\geq0$, then,

$$\lim_{r\rightarrow1^{-}}\left(\int_{b}\big(|h(r\zeta)|^{2}+
|g(r\zeta)|^{2}\big)^{\frac{p}{2}}d\sigma(\zeta)\right)^{\frac{1}{p}}\leq
\frac{1}{C_{1}(p)} \|f\|_{p},$$ where $C_{1}(p)$ is the same as in
Theorem C.
\item[{\rm ($\mathscr{B}_{1}{\rm(II)}$)}] If $f=h+\overline{g}\in\mathscr{PH}^{p}(\Omega)$ and ${\rm Re}(g(0)h(0))\leq0$, then,

$$\|f\|_p\leq
C_{2}(p)\lim_{r\rightarrow1^{-}}\left(\int_{b}\big(|h(r\zeta)|^{2}+
|g(r\zeta)|^{2}\big)^{\frac{p}{2}}d\sigma(\zeta)\right)^{\frac{1}{p}},$$
where $C_{2}(p)$ is the same as in Theorem C.
\end{enumerate}
\item[{\rm ($\mathscr{B}_{2}$)}] If $p=1$, then $f=h+\overline{g}\in\mathscr{PH}^{1}(\Omega)$ implies that $h,~g\in\mathscr{PH}^{q}(\Omega)$ for all $q\in(0,1)$.
Furthermore, if $f=h+\overline{g}\in\mathscr{PH}^{1}(\Omega)$, then
\begin{equation}\label{Ch-Ha-1}
M_{1}(r,h)=O\left(\log\frac{1}{1-r}\right)~\mbox{and}~M_{1}(r,g)=O\left(\log\frac{1}{1-r}\right)
\end{equation} as $r\rightarrow1^{-}$. In particular, if $\Omega$ is
the unit polydisk, then the estimates of  {\rm (\ref{Ch-Ha-1})} are
sharp.
\item[{\rm ($\mathscr{B}_{3}$)}] If $p=\infty$, then  there are two unbounded holomorphic functions $h$ and $g$ such that $f=h+\overline{g}\in\mathscr{PH}^{\infty}(\Omega)$.
\end{enumerate}
\end{Thm}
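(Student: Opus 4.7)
The uniform strategy is \emph{slice reduction}: transfer one-dimensional sharp estimates along complex lines through the origin to integrals over the Bergman-Silov boundary. For $\zeta \in b$ fixed, consider the slice functions $h_\zeta(\lambda) := h(\lambda\zeta)$, $g_\zeta(\lambda) := g(\lambda\zeta)$ and $f_\zeta = h_\zeta + \overline{g_\zeta}$ on $\mathbb{D}$; these are well defined because $\Omega$ is circular and star-shaped with respect to $0$, and the sign condition $\re(g_\zeta(0) h_\zeta(0)) = \re(g(0)h(0))$ is preserved along every slice. Because the rotation $\zeta \mapsto e^{i\theta}\zeta$ lies in $\Gamma_0$ (circularity of $\Omega$), the measure $\sigma$ is rotation-invariant on $b$, hence
\[
\int_b G(\zeta)\,d\sigma(\zeta) \;=\; \int_b \int_0^{2\pi} G(e^{i\theta}\zeta)\,\frac{d\theta}{2\pi}\,d\sigma(\zeta)
\]
for every non-negative measurable $G$ on $b$; this identity will repeatedly convert slicewise mean integrals over $\partial\mathbb{D}$ into integrals over $b$.

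For part~($\mathscr{B}_1$), I apply Theorem~C to the dilates $\lambda \mapsto f(r\lambda\zeta)$ on $\mathbb{D}$ to obtain, pointwise in $\zeta \in b$ and $r \in [0,1)$,
\[
\int_0^{2\pi} \bigl(|h(re^{i\theta}\zeta)|^2 + |g(re^{i\theta}\zeta)|^2\bigr)^{p/2}\,\frac{d\theta}{2\pi} \;\leq\; \frac{1}{C_1(p)^p}\int_0^{2\pi} |f(re^{i\theta}\zeta)|^p\,\frac{d\theta}{2\pi}
\]
when $\re(g(0)h(0)) \geq 0$, and the analogous reverse estimate with $C_2(p)$ when $\re(g(0)h(0)) \leq 0$. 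Integrating in $d\sigma(\zeta)$ and applying the invariance identity to both sides collapses the $\theta$-integral and produces $C_1(p)^{-p}M_p(r,f)^p$ on the right; letting $r \to 1^-$, with $M_p(r,f) \nearrow \|f\|_p$ from plurisubharmonicity of $|f|^p$ for $p \geq 1$, delivers ($\mathscr{B}_1${\rm (I)}) and ($\mathscr{B}_1${\rm (II)}). For the equivalence $f \in \mathscr{PH}^p(\Omega) \Leftrightarrow h,g \in \mathscr{PH}^p(\Omega)$ the $(\Leftarrow)$ direction is the triangle inequality and the $(\Rightarrow)$ direction follows, regardless of the sign of $\re(g(0)h(0))$, by applying the classical non-sharp one-dimensional M.~Riesz theorem slicewise and integrating via the invariance identity to obtain $M_p(r,h) \leq C_p M_p(r,f)$ and likewise for $g$.

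For part~($\mathscr{B}_2$), feed the one-dimensional Kolmogorov/Hardy--Littlewood estimates
\[
\|h_\zeta\|_q \leq C_q \|f_\zeta\|_1 \quad (0<q<1), \qquad M_1(r,h_\zeta) \leq C\log\tfrac{1}{1-r}\,\|f_\zeta\|_1
\]
into the same machinery. Monotonicity of slicewise $L^1$-means (from subharmonicity of $|f|$) combined with the invariance identity gives $\int_b \|f_\zeta\|_1\,d\sigma(\zeta) = \|f\|_1$. Raising the first estimate to the power $q$, integrating in $\sigma$, and applying Jensen's inequality $\int F^q\,d\sigma \leq (\int F\,d\sigma)^q$ (since $0<q<1$) produces $h,g \in \mathscr{PH}^q(\Omega)$; integrating the second estimate directly gives $M_1(r,h) = O\bigl(\log\frac{1}{1-r}\bigr)$. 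Sharpness on the polydisk is exhibited by $h = g = (1-z_1)^{-1}$, so that $f = 2\re\bigl((1-z_1)^{-1}\bigr)$: positivity of $\re\bigl((1-z_1)^{-1}\bigr)$ for $|z_1|<1$, combined with the harmonic mean value property in $z_1$, yields $\|f\|_1 = 2 < \infty$, while the classical asymptotic $\int_0^{2\pi}|1-re^{i\theta}|^{-1}\,d\theta \asymp \log\frac{1}{1-r}$ forces $M_1(r,h) \asymp \log\frac{1}{1-r}$.

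For part~($\mathscr{B}_3$), I construct an explicit example via a peak function. Pick a holomorphic $\phi$ on $\overline{\Omega}$ with $\phi(z_0) = 1$ and $|\phi|<1$ elsewhere at some $z_0 \in b$ (peak functions at Silov-boundary points exist for bounded symmetric domains). Since $|\phi|<1$ implies $\re(1-\phi) > 0$, the principal branch $h = \log(1-\phi)$ is a well-defined holomorphic function on $\Omega$ with $\im(h) = \arg(1-\phi) \in (-\pi/2,\pi/2)$. Setting $g = -h$ gives $f = h - \overline{h} = 2i\,\im(h)$, which is bounded, whereas $h$ and $g$ are unbounded because $|h(z)| \to \infty$ as $z \to z_0$. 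The main technical subtlety across all three parts will be the interchange of $r \to 1^-$ with $\sigma$-integration and with the slicewise one-dimensional limits; this is controlled throughout by the (pluri)subharmonicity-based monotonicity of the relevant means, and complex-valued $f_\zeta$ are handled by splitting into real and imaginary parts when invoking the Kolmogorov-type results.
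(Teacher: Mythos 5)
Your proposal is correct and follows essentially the same route as the paper's proof: slicewise reduction to the unit disk via the rotation-invariance of $\sigma$, monotonicity of the slice means coming from subharmonicity of $|f|^p$ and $(|h|^2+|g|^2)^{p/2}$ on slices, application of the sharp one-dimensional inequalities (Theorem C, M.~Riesz, Hardy--Littlewood) to the dilated slice functions, and the same polydisk example $h=g=(1-z_{1})^{-1}$ for sharpness. The only cosmetic differences are that for the $q<1$ part of ($\mathscr{B}_{2}$) the paper invokes Mitchell's theorem for bounded symmetric domains where you re-derive it by slicing the one-dimensional Kolmogorov estimate and using Jensen's inequality, and that for ($\mathscr{B}_{3}$) the paper writes down an explicit Bergman-metric-based function in place of your abstract peak function at a Silov boundary point.
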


As in \cite[Corollaries 2.2 and 2.5]{K-2019}, we obtain the
following corollary from Theorem \ref{thm-1}, which generalizes the
Riesz type inequalities (\ref{Riez}), (\ref{Riez-1}) and
(\ref{Riez-2}) to real valued pluriharmonic functions on bounded
symmetric domains in $\mathbb{C}^{n}$.

\begin{Cor}\label{Sharp-1}
Let $\Omega\subset\mathbb{C}^{n}$ be a bounded symmetric domain with
origin and Bergman-Silov boundary $b$, and let $1<p<\infty$. If $u$
and $v$ are real pluriharmonic functions on $\Omega$ with $v(0)=0$
and $g=u+iv$ is a holomorphic function on $\Omega$, then, we have
\begin{equation}\label{Sharp-2}
 \| v\|_p\leq \cos\frac{\pi}{2p^*} \| g\|_p
\end{equation}
and
\begin{equation}\label{Sharp-3} \| g\|_p\leq
\frac{1}{\sin\frac{\pi}{2p^*}} \| u\|_p.
\end{equation}
\end{Cor}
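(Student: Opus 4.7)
The plan is to deduce both inequalities directly from part $(\mathscr{B}_{1})$ of Theorem \ref{thm-1} by choosing the right pluriharmonic decomposition of $u$ and $v$ in terms of the holomorphic function $g$, and then reducing the constants via a case analysis on whether $p\geq 2$ or $p\leq 2$. The key preliminary observation is that the hypothesis $v(0)=0$ forces $g(0)=u(0)\in\mathbb{R}$, so $g(0)^{2}\geq 0$; this will determine the sign of the relevant real parts in the hypotheses of $(\mathscr{B}_{1}\text{(I)})$ and $(\mathscr{B}_{1}\text{(II)})$.

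For (\ref{Sharp-3}), I would write $u=\mathrm{Re}\,g=\frac{g}{2}+\overline{\frac{g}{2}}$, which matches the decomposition $h+\overline{g_{1}}$ with $h=g_{1}=g/2$. Then $\mathrm{Re}(g_{1}(0)h(0))=g(0)^{2}/4\geq 0$, so part $(\mathscr{B}_{1}\text{(I)})$ applies. Since $|h|^{2}+|g_{1}|^{2}=\tfrac{1}{2}|g|^{2}$, it gives
\[
\tfrac{1}{\sqrt{2}}\,\|g\|_{p}\;\leq\;\frac{1}{C_{1}(p)}\,\|u\|_{p}.
\]
To match the target constant, I would use $1-|\cos(\pi/p)|=2\sin^{2}(\pi/(2p))$ when $p\geq 2$ and $1-|\cos(\pi/p)|=1+\cos(\pi/p)=2\cos^{2}(\pi/(2p))$ when $1<p\leq 2$, together with the identity $\sin\frac{\pi}{2p^{*}}=\sin\frac{\pi}{2p}$ for $p\geq 2$ and $\sin\frac{\pi}{2p^{*}}=\cos\frac{\pi}{2p}$ for $p\leq 2$. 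In each case this yields $C_{1}(p)=\sqrt{2}\sin\frac{\pi}{2p^{*}}$, hence $\sqrt{2}/C_{1}(p)=1/\sin\frac{\pi}{2p^{*}}$, which is exactly (\ref{Sharp-3}).

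For (\ref{Sharp-2}), I would analogously write $v=\mathrm{Im}\,g=-\tfrac{i}{2}g+\overline{-\tfrac{i}{2}g}$, so that $h=g_{1}=-ig/2$. Now $\mathrm{Re}(g_{1}(0)h(0))=\mathrm{Re}(-g(0)^{2}/4)=-g(0)^{2}/4\leq 0$, so part $(\mathscr{B}_{1}\text{(II)})$ applies. Again $|h|^{2}+|g_{1}|^{2}=\tfrac{1}{2}|g|^{2}$, and the theorem gives $\|v\|_{p}\leq C_{2}(p)\cdot\tfrac{1}{\sqrt{2}}\|g\|_{p}$. Using the definition $C_{2}(p)=\sqrt{2}\max\{\sin\frac{\pi}{2p},\cos\frac{\pi}{2p}\}$ and the two-case analysis of $p^{*}$, the maximum equals $\sin\frac{\pi}{2p}$ when $p\leq 2$ and $\cos\frac{\pi}{2p}$ when $p\geq 2$, both of which equal $\cos\frac{\pi}{2p^{*}}$. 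This yields (\ref{Sharp-2}).

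There is no substantive obstacle here: the corollary is really a specialization of Theorem \ref{thm-1}$(\mathscr{B}_{1})$ to the diagonal decomposition $h=\pm g_{1}$ coming from the fact that $u,v$ are the real and imaginary parts of a single holomorphic function. The only care required is in (i) observing that $v(0)=0$ places us in the correct sign regime of $\mathrm{Re}(g_{1}(0)h(0))$ for each inequality, and (ii) bookkeeping the trigonometric identities that convert $C_{1}(p)$ and $C_{2}(p)$ into expressions involving $p^{*}$; the latter is the most delicate step, since it requires splitting at $p=2$ and using the reflection $\tfrac{\pi}{2p^{*}}=\tfrac{\pi}{2}-\tfrac{\pi}{2p}$ for $1<p\leq 2$.
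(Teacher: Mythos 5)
Your proposal is correct and is exactly the derivation the paper intends: the paper gives no independent proof of Corollary~2.3, merely noting that it follows from Theorem~2.2 ``as in [K-2019, Corollaries 2.2 and 2.5],'' and that route is precisely your specialization $u=\tfrac{g}{2}+\overline{\tfrac{g}{2}}$ (sign regime of $(\mathscr{B}_{1}\mathrm{(I)})$) and $v=-\tfrac{i}{2}g+\overline{-\tfrac{i}{2}g}$ (sign regime of $(\mathscr{B}_{1}\mathrm{(II)})$), followed by the identification $C_{1}(p)=\sqrt{2}\sin\tfrac{\pi}{2p^{*}}$ and $C_{2}(p)=\sqrt{2}\cos\tfrac{\pi}{2p^{*}}$ via the split at $p=2$. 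All the computations (the factor $|h|^{2}+|g_{1}|^{2}=\tfrac12|g|^{2}$ and the trigonometric bookkeeping) check out.
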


\begin{Rem}
If $n=1$ and $\Omega$ is the unit disk in Corollary \ref{Sharp-1},
then the inequalities (\ref{Sharp-2}) and (\ref{Sharp-3}) are sharp
(see \cite{K-2019,Ve} and \cite[Theorem 3.7]{Pi}).
\end{Rem}

Let $\mathbf{b}^p(\mathbb{B}^n)$ denote the Bergman class of
pluriharmonic functions $f$ defined on $\mathbb{B}^n$, satisfying
the condition
\[
\| f\|_{\mathbf{b}^p(\mathbb{B}^n)}:=\left( \int_{\mathbb{B}^n}
|f(z)|^p{d\mu(z)}\right)^{1/p}<\infty.
\]
As in \cite[Corollary 2.9]{K-2019}, we also obtain the following
corollary from Theorem \ref{thm-1}.

\begin{Cor}
Let $1<p<\infty$. If $h$ and $g$ are holomorphic functions on
$\mathbb{B}^n$, $f=h+\overline{g}\in{\mathbf{b}^p(\mathbb{B}^n)}$
and $ {\rm Re}(g(0)h(0))=0$, then we have
\[
\left(\int_{\mathbb{B}^n}\big(|h(z)|^{2}+
|g(z)|^{2}\big)^{\frac{p}{2}}{d\mu(z)}\right)^{\frac{1}{p}}\leq
\frac{1}{C_{1}(p)}
\left(\int_{\mathbb{B}^n}|f(z)|^{p}{d\mu(z)}\right)^{\frac{1}{p}},
\]
and
\[
\left(\int_{\mathbb{B}^n}|f(z)|^{p}{d\mu(z)}\right)^{\frac{1}{p}}
\leq C_{2}(p)\left(\int_{\mathbb{B}^n}\big(|h(z)|^{2}+
|g(z)|^{2}\big)^{\frac{p}{2}}{d\mu(z)}\right)^{\frac{1}{p}},
\]
where $C_{1}(p)$ and $C_{2}(p)$ are the same as in Theorem C.
\end{Cor}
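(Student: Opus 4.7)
The plan is to reduce the Bergman-space corollary to the Hardy-space statement of Theorem~\ref{thm-1} via polar integration on the ball $\mathbb{B}^{n}$. Recall that the Bergman-Silov boundary of $\mathbb{B}^{n}$ is the sphere $\mathbf{S}^{2n-1}$, and the Lebesgue measure decomposes as
\[
\int_{\mathbb{B}^{n}} \Phi(z)\, d\mu(z) \;=\; 2n\, V_{2n}\int_{0}^{1} r^{2n-1}\!\int_{\mathbf{S}^{2n-1}}\Phi(r\zeta)\, d\sigma(\zeta)\, dr,
\]
where $V_{2n}$ denotes the Euclidean volume of $\mathbb{B}^{n}$ and $\sigma$ is the normalized surface measure on $\mathbf{S}^{2n-1}$. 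Applying this identity with $\Phi = |f|^{p}$ and with $\Phi = (|h|^{2}+|g|^{2})^{p/2}$, the two claimed Bergman inequalities will follow at once from the radial-integral inequalities
\[
\int_{b} \bigl(|h(r\zeta)|^{2}+|g(r\zeta)|^{2}\bigr)^{p/2}\, d\sigma(\zeta) \;\leq\; \frac{1}{C_{1}(p)^{p}}\int_{b} |f(r\zeta)|^{p}\, d\sigma(\zeta)
\]
and its reverse with constant $C_{2}(p)^{p}$, provided these hold for every $r\in(0,1)$.

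To obtain the pointwise-in-$r$ inequalities, I would fix $r\in(0,1)$ and apply Theorem~\ref{thm-1}\,($\mathscr{B}_{1}$) not to $f$ itself but to the dilated function $F(z)=f(rz)=h(rz)+\overline{g(rz)}$. This $F$ is pluriharmonic on a neighborhood of $\overline{\mathbb{B}^{n}}$, hence automatically lies in $\mathscr{PH}^{p}(\mathbb{B}^{n})$. The crucial point is that the associated splitting $F=H+\overline{G}$ with $H(z)=h(rz)$ and $G(z)=g(rz)$ satisfies ${\rm Re}(G(0)H(0))={\rm Re}(g(0)h(0))=0$, so both hypotheses ${\rm Re}(G(0)H(0))\geq 0$ and ${\rm Re}(G(0)H(0))\leq 0$ are simultaneously in force, allowing both halves of Theorem~\ref{thm-1}\,($\mathscr{B}_{1}$) to be invoked. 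By continuity of $H$ and $G$ up to $\overline{\mathbb{B}^{n}}$ and dominated convergence, the limits as $\rho\to 1^{-}$ appearing in Theorem~\ref{thm-1} collapse to the corresponding integrals over $b$ at the radius $r$, while $\|F\|_{p}$ equals $\bigl(\int_{b}|f(r\zeta)|^{p}d\sigma(\zeta)\bigr)^{1/p}$ by Hardy convexity (monotonicity of $M_{p}(\cdot,f)$ in the radial variable) together with continuity. The two radial inequalities above then drop out directly.

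Multiplying each radial inequality by $r^{2n-1}$, integrating from $0$ to $1$, invoking the polar decomposition recorded above, and extracting $p$-th roots completes the proof with the constants $1/C_{1}(p)$ and $C_{2}(p)$, as required. The only subtlety is converting the asymptotic (limit-based) form of Theorem~\ref{thm-1} into a pointwise statement at each radius; dilation together with Hardy convexity of $M_{p}$ takes care of this, so no genuinely new obstacle arises. The argument is a direct analogue of the deduction of \cite[Corollary~2.9]{K-2019} from \cite[Theorems~2.1 and 2.3]{K-2019}, now available in arbitrary complex dimension thanks to Theorem~\ref{thm-1}.
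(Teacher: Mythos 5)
Your argument is correct and is exactly the derivation the paper intends (the paper gives no details, saying only that the corollary follows from Theorem~2.1 ``as in \cite[Corollary 2.9]{K-2019}''): the hypothesis ${\rm Re}(g(0)h(0))=0$ lets you invoke both halves of ($\mathscr{B}_{1}$), the dilation $F(z)=f(rz)$ together with monotonicity of $M_{p}$ converts the limit-based Hardy-space inequalities into pointwise-in-$r$ inequalities on each sphere, and polar integration finishes the proof. No gaps.
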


\begin{Def}\label{eq-de-1}
Let $\Omega\subset\mathbb{C}^n$ be a proper subdomain. We call
$\Omega$ a {\it $\delta$-John domain}, $\delta>0$, if there is a
point $x_{0}\in\Omega$ which can be joined with any point
$x\in\Omega$ by a continuous curve $\gamma\subset\Omega$ which
satisfies $$\delta|\xi-x|\leq\,d_{\Omega}(\xi)$$
 for all $\xi\in\gamma$, where $d_{\Omega}(\xi)$ is the Euclidean
 distance between $\xi$ and the boundary of $\Omega$.
\end{Def}

Definition \ref{eq-de-1} is equivalent to the usual definition of a
John domain.  The $\delta$-John domains are wide class of domains
including quasidisks and bounded uniform domains (cf. \cite{N}).
Note that a $\delta$-John domain is bounded in $\mathbb{C}^n$ (see
\cite[p.273-p.274]{I-N}).

Iwaniec and Nolder \cite{I-N} established estimates similar to
Theorem B for the components of a quasiregular mapping in domains in
$\mathbb{R}^n$ which satisfy certain geometric conditions. On John
domains, Nolder \cite{N} obtained a generalization of Theorem B to
solutions of certain elliptic equations in divergence form, whose
gradients have comparable local $L^{\alpha}$ norms.

By analogy with Theorem B, we get the following result for
pluriharmonic functions on John domains.

\begin{Thm}\label{thm-2}
Let $f=h+\overline{g}$ be a pluriharmonic function in a  domain
$\Omega\subset\mathbb{C}^{n}$, where $h$ and $g$ are holomorphic in
$\Omega$ and $\Omega$ is a $\delta$-John domain. Then, for
$p\in(0,\infty)$, $q\in(1,\infty)$ and $M\in [1,\infty)$, there is a
positive constant $C$ depending only on $p$, $q$, $n$,   $\delta$
and $M$ such that
 $$\inf_{a\in \mathbb{C}}\|h-a\|_{p,\Omega,\omega}+\inf_{b\in \mathbb{C}}\|g-b\|_{p,\Omega,\omega}\leq C\|f\|_{p,\Omega,\omega},$$
 where $\omega\in A_{M}^{q}(\mathbb{C}^{n})$.
\end{Thm}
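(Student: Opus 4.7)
The plan is to reduce the inequality to the combination of a weighted Poincar\'e inequality on $\delta$-John domains and a weighted Caccioppoli-type estimate for pluriharmonic functions, linked by a pointwise gradient identity coming from the Cauchy--Riemann equations. The holomorphic/pluriharmonic structure of $f=h+\overline{g}$ only enters through the gradient identity and through the fact that $f$ is harmonic in the real sense.

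First, using the Wirtinger formulas together with the holomorphy of $h$ and $g$, I would show that $\partial f/\partial z_j=h_{z_j}$ and $\partial f/\partial\overline{z}_j=\overline{g_{z_j}}$, which gives the pointwise identity
\begin{equation*}
|\nabla f|^{2}=|\nabla h|^{2}+|\nabla g|^{2}
\end{equation*}
on $\Omega$, and in particular $|\nabla h|,|\nabla g|\le|\nabla f|$. Next, I would invoke the weighted Poincar\'e inequality on $\delta$-John domains with Muckenhoupt weights (in the spirit of Hurri-Syrj\"anen, Chua, and the $A_{q}$-version used in Nolder~\cite{N}): for any $\omega\in A_{M}^{q}(\mathbb{C}^{n})$ and any $C^{1}$ function $u$ on $\Omega$,
\begin{equation*}
\inf_{a\in\mathbb{C}}\|u-a\|_{p,\Omega,\omega}\le C\,\|d_{\Omega}\nabla u\|_{p,\Omega,\omega},
\end{equation*}
with $C$ depending only on $n,p,q,M$ and $\delta$. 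Applied separately to $h$ and to $g$ and combined with the gradient identity, this yields
\begin{equation*}
\inf_{a}\|h-a\|_{p,\Omega,\omega}+\inf_{b}\|g-b\|_{p,\Omega,\omega}\le C\,\|d_{\Omega}\nabla f\|_{p,\Omega,\omega}.
\end{equation*}

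The remaining step is the weighted Caccioppoli-type bound
\begin{equation*}
\|d_{\Omega}\nabla f\|_{p,\Omega,\omega}\le C\,\|f\|_{p,\Omega,\omega}.
\end{equation*}
To prove it, I would use that $f$ is harmonic in the real sense: on each ball $B(z,d_{\Omega}(z)/2)\subset\Omega$, the Cauchy-type gradient estimate combined with the classical inequality $|f(z)|\le C(\sigma,n)(|B|^{-1}\int_{B}|f|^{\sigma}\,d\mu)^{1/\sigma}$, valid for any $\sigma>0$, produces
\begin{equation*}
d_{\Omega}(z)|\nabla f(z)|\le C\bigl(\mathcal{M}(|f|^{\sigma})(z)\bigr)^{1/\sigma},
\end{equation*}
where $\mathcal{M}$ denotes the local Hardy--Littlewood maximal operator computed over balls of radius at most $d_{\Omega}(z)/2$. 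Choosing $\sigma>0$ so small that $p/\sigma>q$ ensures $\omega\in A_{p/\sigma}$, and then the weighted boundedness $\mathcal{M}:L^{p/\sigma}(\omega)\to L^{p/\sigma}(\omega)$ applied to $|f|^{\sigma}$ completes the estimate. Combining the last two displays proves the theorem.

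The hardest part will be to justify the weighted Poincar\'e step uniformly in the full range $p\in(0,\infty)$---in particular for $p<1$, where the standard proof proceeds by chaining local Poincar\'e inequalities on Whitney balls along a central curve of the $\delta$-John domain and the Muckenhoupt constants must be tracked through the chain. All other ingredients---the gradient identity, the mean-value/Cauchy bound, and the weighted maximal-function inequality---are classical; the main care lies in exploiting the self-improvement of the Muckenhoupt class $A_{M}^{q}$ so that the exponent $p/\sigma$ is admissible, and in keeping the final constant $C$ dependent only on $p,q,n,\delta,M$.
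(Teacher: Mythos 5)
Your route is genuinely different from the paper's. The paper proves this theorem in a few lines by an algebraic reduction to Nolder's Hardy--Littlewood theorem on John domains (\cite[Theorem 3.1]{N}): it sets $F=h+g$, notes $\mathrm{Re}\,F=\mathrm{Re}\,f$, applies Nolder's conjugate-function inequality $\inf_a\|\mathrm{Im}\,F-a\|_{p,\Omega,\omega}\le C\inf_a\|\mathrm{Re}\,F-a\|_{p,\Omega,\omega}$, recovers $\mathrm{Im}\,g=\tfrac12(\mathrm{Im}\,F-\mathrm{Im}\,f)$, applies Nolder once more to pass from $\mathrm{Im}\,g$ to $\mathrm{Re}\,g$, and finishes with $h=f-\overline{g}$ and Lemma J. You instead bypass the algebraic decomposition via the cleaner pointwise identity $|\nabla h|,|\nabla g|\le|\nabla f|$ and then rebuild the analytic engine yourself (weighted Poincar\'e on John domains plus a weighted Caccioppoli/maximal-function estimate); this is essentially a re-derivation of the machinery inside Nolder's proof rather than a citation of it. Your Caccioppoli step is sound: pluriharmonic functions are harmonic, the pointwise bound $d_{\Omega}(z)|\nabla f(z)|\le C(\mathcal{M}(|f|^{\sigma})(z))^{1/\sigma}$ is standard, and shrinking $\sigma$ so that $p/\sigma>q$ together with the nesting of Muckenhoupt classes gives the weighted maximal bound. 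Incidentally, your gradient identity combined directly with Nolder's Theorem 3.1 (applied to the harmonic pair $f$, $h$ and then $f$, $g$, whose gradients are pointwise comparable in the required direction) would yield an even shorter proof than either argument.

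There is, however, one step stated in a false generality. The weighted Poincar\'e inequality
\begin{equation*}
\inf_{a\in\mathbb{C}}\|u-a\|_{p,\Omega,\omega}\le C\,\|d_{\Omega}\nabla u\|_{p,\Omega,\omega}
\end{equation*}
is \emph{not} valid for arbitrary $C^{1}$ functions when $p<1$, even on a ball with $\omega\equiv 1$: a smooth approximation of a step function with a transition layer of width $\varepsilon$ has $\|\nabla u\|_{p}^{p}\sim\varepsilon^{1-p}\to0$ while $\inf_a\|u-a\|_{p}$ stays bounded below. You flag $p<1$ as the hardest part, but you attribute the difficulty to the chaining and to tracking Muckenhoupt constants; the real obstruction is that the local Poincar\'e inequality on each Whitney ball already fails for general $C^{1}$ data in this range. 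The repair is to use that $h$ and $g$ are holomorphic (hence harmonic), so that $|\nabla h|^{\sigma}$ is subharmonic for small $\sigma>0$ and satisfies a reverse H\"older inequality $\fint_{B}|\nabla h|\le C(\fint_{2B}|\nabla h|^{p})^{1/p}$ on Whitney balls; only with this self-improvement can the $L^{1}$-based local Poincar\'e inequality be downgraded to $L^{p}$, $p<1$, and chained along the John curve. Since your target functions are indeed holomorphic, the gap is fixable, but as written the key intermediate inequality is asserted for a class of functions for which it is false.
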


The following result  easily follows from Theorem \ref{thm-2}.
\begin{Cor}
Let $f=h+\overline{g}$ be a pluriharmonic function in a domain
$\Omega\subset\mathbb{C}^{n}$, where $h$ and $g$ are holomorphic in
$\Omega$ and $\Omega$ is a $\delta$-John domain. Then,for
$p\in(0,\infty)$, $q\in(1,\infty)$ and $M\in [1,\infty)$,
$\|f\|_{p,\Omega,\omega}<\infty$ if and only if
$\|h\|_{p,\Omega,\omega}+\|g\|_{p,\Omega,\omega}<\infty$, where
$\omega\in A_{M}^{q}(\mathbb{C}^{n})$.
\end{Cor}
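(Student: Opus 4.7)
The plan is to deduce the corollary directly from Theorem \ref{thm-2}, using the two directions of the biconditional separately, the boundedness of a John domain, and the local integrability of Muckenhoupt weights.

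For the easy direction ($\Leftarrow$), I would observe that $|f| \leq |h| + |\overline g| = |h| + |g|$, so by the elementary inequality $(x+y)^p \leq 2^p(x^p + y^p)$ for $p \in (0,\infty)$,
\begin{equation*}
\|f\|_{p,\Omega,\omega}^p \leq 2^p\big(\|h\|_{p,\Omega,\omega}^p + \|g\|_{p,\Omega,\omega}^p\big),
\end{equation*}
which gives $\|f\|_{p,\Omega,\omega} < \infty$ as soon as both summands on the right are finite. This step uses nothing about holomorphicity, plurihamonicity, or John domains.

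For the nontrivial direction ($\Rightarrow$), I would apply Theorem \ref{thm-2} to obtain constants $a_0, b_0 \in \mathbb{C}$ (or at least sequences realizing the infima) with $\|h - a_0\|_{p,\Omega,\omega}$ and $\|g - b_0\|_{p,\Omega,\omega}$ finite. To conclude $\|h\|_{p,\Omega,\omega} < \infty$, I need to absorb the constants $a_0, b_0$, which amounts to showing that $\omega(\Omega) := \int_\Omega \omega\, d\mu$ is finite. Here is where the $\delta$-John hypothesis enters: a $\delta$-John domain in $\mathbb{C}^n$ is bounded, so $\Omega$ is contained in some ball $Q$; since $\omega \in A_M^q(\mathbb{C}^n) \subset L^1_{\mathrm{loc}}(\mathbb{C}^n)$, we have $\omega(\Omega) \leq \omega(Q) < \infty$. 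Then, again via $(x+y)^p \leq 2^p(x^p + y^p)$,
\begin{equation*}
\|h\|_{p,\Omega,\omega}^p \leq 2^p\big(\|h - a_0\|_{p,\Omega,\omega}^p + |a_0|^p\,\omega(\Omega)\big) < \infty,
\end{equation*}
and symmetrically for $g$.

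There is essentially no hard step; the only point requiring a moment of care is the $p < 1$ case, where I cannot use the triangle inequality and must instead invoke the quasi-triangle inequality together with $\omega(\Omega) < \infty$. If only infima (not minimizers) are available from Theorem \ref{thm-2}, I would pick $a_0, b_0$ nearly achieving them, which is enough since the statement only concerns finiteness. Combining the two directions yields the biconditional, with the implicit constants depending on $p$, $q$, $n$, $\delta$, $M$, and $\omega(\Omega)$.
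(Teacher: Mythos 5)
Your proposal is correct and is essentially the intended argument: the paper gives no proof, stating only that the corollary ``easily follows'' from Theorem \ref{thm-2}, and your deduction --- the trivial direction via $|f|\le|h|+|g|$ and Lemma J, and the nontrivial direction by absorbing the constants $a_0,b_0$ using $\omega(\Omega)<\infty$ (boundedness of the John domain plus $\omega\in L^1_{loc}$) --- is exactly the missing routine verification. Your attention to the $p<1$ quasi-triangle inequality and to the possibility that the infima are not attained is appropriate but, as you note, harmless for a pure finiteness statement.
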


By taking $g=-h$ in Theorem \ref{thm-2}, we obtain the following
generalization of Theorem B to higher dimensions.

\begin{Cor}\label{cor-2b}
Let $h=u+iv$ be a holomorphic function in a  domain
$\Omega\subset\mathbb{C}^{n}$, where $u$ and $v$ are real valued
pluriharmonic functions in $\Omega$ and $\Omega$ is a $\delta$-John
domain. Then, for $p\in(0,\infty)$, $q\in(1,\infty)$ and $M\in
[1,\infty)$, there is a positive constant $C$ depending only on $p$,
$q$, $n$,   $\delta$ and $M$ such that
 $$\inf_{a\in \mathbb{R}}\|u-a\|_{p,\Omega,\omega}\leq C\|v\|_{p,\Omega,\omega},$$
 where $\omega\in A_{M}^{q}(\mathbb{C}^{n})$.
\end{Cor}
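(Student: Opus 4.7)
The plan is to apply Theorem \ref{thm-2} with the specific substitution $g = -h$, so that the pluriharmonic function $f = h + \overline{g}$ collapses to a pure imaginary multiple of $v$. Indeed, writing $h = u + iv$ with $u,v$ real pluriharmonic, the choice $g = -h$ (which is still holomorphic in $\Omega$) gives
\[
f \;=\; h + \overline{g} \;=\; h - \overline{h} \;=\; 2i\,\operatorname{Im}(h) \;=\; 2iv.
\]
Thus the hypotheses of Theorem \ref{thm-2} are met for this $f$ with the same $\omega\in A_M^q(\mathbb{C}^n)$, $p$, $q$ and $M$.

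Next I would feed this into Theorem \ref{thm-2} directly, obtaining a constant $C_0$ depending only on $p,q,n,\delta,M$ such that
\[
\inf_{a\in\mathbb{C}} \|h-a\|_{p,\Omega,\omega} \;+\; \inf_{b\in\mathbb{C}} \|-h-b\|_{p,\Omega,\omega} \;\leq\; C_0 \,\|f\|_{p,\Omega,\omega} \;=\; 2C_0\,\|v\|_{p,\Omega,\omega}.
\]
Discarding the (non-negative) second term on the left immediately yields an estimate for $\inf_{a\in\mathbb{C}}\|h-a\|_{p,\Omega,\omega}$ in terms of $\|v\|_{p,\Omega,\omega}$.

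Finally I would pass from complex to real constants via the trivial pointwise inequality $|u - \operatorname{Re}(a)| \leq |h - a|$, valid for every $a\in\mathbb{C}$ since $h - a = (u - \operatorname{Re}(a)) + i(v - \operatorname{Im}(a))$. Raising this to the $p$-th power, multiplying by the weight $\omega$, integrating, and taking the $p$-th root (interpreting $\|\cdot\|_{p,\Omega,\omega}$ as a quasi-norm when $0<p<1$, which is harmless since the inequality is pointwise and the scaling $\|c v\|_{p,\Omega,\omega} = |c|\,\|v\|_{p,\Omega,\omega}$ still holds) gives
\[
\inf_{a\in\mathbb{R}} \|u-a\|_{p,\Omega,\omega} \;\leq\; \inf_{a\in\mathbb{C}} \|u-\operatorname{Re}(a)\|_{p,\Omega,\omega} \;\leq\; \inf_{a\in\mathbb{C}} \|h-a\|_{p,\Omega,\omega} \;\leq\; 2C_0\,\|v\|_{p,\Omega,\omega},
\]
which is precisely the desired inequality with $C = 2C_0$.

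There is no substantial obstacle in this argument: the entire content of Corollary \ref{cor-2b} is already encoded in Theorem \ref{thm-2}, and the only subtle point is the recognition that $g=-h$ eliminates the real part of $h$ and isolates $v$, followed by the elementary reduction from a complex to a real infimum. I would therefore expect this proof to occupy only a few lines in the final manuscript.
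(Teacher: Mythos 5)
Your proposal is correct and is exactly the argument the paper intends: the corollary is stated as following ``by taking $g=-h$ in Theorem \ref{thm-2}'', and your computation $f=h-\overline{h}=2iv$, the discarding of the second infimum, and the pointwise reduction $|u-\operatorname{Re}(a)|\leq|h-a|$ fill in those routine details correctly.
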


In the case $p\geq 1$, we further obtain the following results.

\begin{Thm}\label{thm-2c}
Let $p\in [1,\infty)$, $q\in(1,\infty)$, $M\in [1,\infty)$, and
$\omega\in A_{M}^{q}(\mathbb{C}^{n})$. Let $f=h+\overline{g}$ be a
pluriharmonic function in a  domain $\Omega\subset\mathbb{C}^{n}$,
where $h, g\in L^{p}(\Omega,\omega)$ are holomorphic in $\Omega$ and
$\Omega$ is a $\delta$-John domain. Then, there is a positive
constant $C$ depending only on $p$, $q$, $n$,   $\delta$ and $M$
such that
 $$\|h-h_{\Omega, \omega}\|_{p,\Omega,\omega}+\|g-g_{\Omega, \omega}\|_{p,\Omega,\omega}\leq C\|f\|_{p,\Omega,\omega}.$$
\end{Thm}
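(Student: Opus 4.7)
The plan is to deduce Theorem \ref{thm-2c} from Theorem \ref{thm-2} by showing that, for $p\geq 1$, the weighted mean $h_{\Omega,\omega}$ is a quasi-minimizer of the map $a\mapsto \|h-a\|_{p,\Omega,\omega}$ over $a\in\mathbb{C}$, and likewise for $g$. All of the hard analytic work (the $\delta$-John geometry, the Muckenhoupt weight, the coupling between $h$, $g$ and $f=h+\overline g$) has already been absorbed into Theorem \ref{thm-2}, so only a simple averaging argument remains.

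First I would check that the averages $h_{\Omega,\omega}$ and $g_{\Omega,\omega}$ are well-defined. Since a $\delta$-John domain in $\mathbb{C}^n$ is bounded, $\Omega$ is contained in some ball, and the Muckenhoupt weight $\omega$ is locally integrable, so $0<\int_\Omega\omega\,d\mu<\infty$ (positivity uses $\omega>0$ a.e.). For $p\geq 1$, Hölder's inequality against the constant function then gives $L^p(\Omega,\omega)\subset L^1(\Omega,\omega)$, so $h_{\Omega,\omega}, g_{\Omega,\omega}\in\mathbb{C}$ make sense.

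The key step is the numerical inequality
$$ |h_{\Omega,\omega}-a|^{p}\int_\Omega\omega\,d\mu\;\leq\;\int_\Omega|h-a|^{p}\omega\,d\mu,\qquad a\in\mathbb{C}, $$
which follows from Jensen's inequality applied to the convex function $z\mapsto|z|^p$ on $\mathbb{C}$ (using $p\geq1$) and the probability measure $d\nu=\omega\,d\mu/\int_\Omega\omega\,d\mu$, combined with the linearity identity $(h-a)_{\Omega,\omega}=h_{\Omega,\omega}-a$. This rewrites as $\|a-h_{\Omega,\omega}\|_{p,\Omega,\omega}\leq\|h-a\|_{p,\Omega,\omega}$, and then Minkowski's inequality (again requiring $p\geq 1$) yields
$$ \|h-h_{\Omega,\omega}\|_{p,\Omega,\omega}\;\leq\;\|h-a\|_{p,\Omega,\omega}+\|a-h_{\Omega,\omega}\|_{p,\Omega,\omega}\;\leq\;2\,\|h-a\|_{p,\Omega,\omega}. $$
Taking the infimum over $a\in\mathbb{C}$ gives $\|h-h_{\Omega,\omega}\|_{p,\Omega,\omega}\leq 2\inf_{a\in\mathbb{C}}\|h-a\|_{p,\Omega,\omega}$, and the same reasoning applied to $g$ yields the analogous bound.

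Finally I would add these two estimates and invoke Theorem \ref{thm-2} to control the sum of the two infima by $C\|f\|_{p,\Omega,\omega}$, producing the desired inequality with constant $2C$. I do not foresee a serious obstacle: the argument is essentially the observation that the constant of best $L^p$ approximation is equivalent, for $p\geq1$, to the weighted mean. The only point of mild care is ensuring that the infima in Theorem \ref{thm-2} are attained (or approximated) by constants which make the subtraction $h-h_{\Omega,\omega}$ legitimate in $L^p(\Omega,\omega)$, and this is automatic from the hypothesis $h,g\in L^p(\Omega,\omega)$.
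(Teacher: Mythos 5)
Your proposal is correct and follows essentially the same route as the paper: the paper proves a lemma (Lemma \ref{Lemx-3}) showing $\|h-h_{\Omega,\omega}\|_{p,\Omega,\omega}\leq 2^{2-\frac{1}{p}}\inf_{a}\|h-a\|_{p,\Omega,\omega}$ via the triangle inequality plus H\"older's inequality for the mean, and then invokes Theorem \ref{thm-2}, exactly as you do with Jensen plus Minkowski (your constant $2$ is even marginally sharper).
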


By taking $g=-h$ in Theorem \ref{thm-2c}, we obtain the following
result.

\begin{Cor}\label{cor-2d}
Let $p\in [1,\infty)$, $q\in(1,\infty)$, $M\in [1,\infty)$, and
$\omega\in A_{M}^{q}(\mathbb{C}^{n})$. Let $h=u+iv\in
L^{p}(\Omega,\omega)$ be a holomorphic function in a  domain
$\Omega\subset\mathbb{C}^{n}$, where $u$ and $v$ are real valued
pluriharmonic functions in $\Omega$ and $\Omega$ is a $\delta$-John
domain. Then, there is a positive constant $C$ depending only on
$p$, $q$, $n$,   $\delta$ and $M$ such that
 $$\| u-u_{\Omega,\omega}\|_{p,\Omega, \omega}\leq C\|v\|_{p,\Omega,\omega}.$$
\end{Cor}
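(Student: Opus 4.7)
The plan is to reduce Corollary \ref{cor-2d} to Theorem \ref{thm-2c} by a direct substitution, exactly mirroring the way Corollary \ref{cor-2b} is extracted from Theorem \ref{thm-2}. Specifically, I would apply Theorem \ref{thm-2c} with the choice $g:=-h$. Since $h\in L^{p}(\Omega,\omega)$ is holomorphic in $\Omega$, so is $g=-h$, and evidently $g\in L^{p}(\Omega,\omega)$; thus all the hypotheses of Theorem \ref{thm-2c} are in place.

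With this choice, the pluriharmonic combination collapses:
\[
f = h+\overline{g} = h-\overline{h} = (u+iv)-(u-iv) = 2iv,
\]
so that $\|f\|_{p,\Omega,\omega}=2\|v\|_{p,\Omega,\omega}$. Theorem \ref{thm-2c} then gives
\[
\|h-h_{\Omega,\omega}\|_{p,\Omega,\omega}+\|g-g_{\Omega,\omega}\|_{p,\Omega,\omega}\leq 2C\,\|v\|_{p,\Omega,\omega}.
\]

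To finish, I would split $h-h_{\Omega,\omega}=(u-u_{\Omega,\omega})+i(v-v_{\Omega,\omega})$, using linearity of the weighted averaging operator and the fact that $u_{\Omega,\omega}$ and $v_{\Omega,\omega}$ are real. This yields the pointwise bound $|u-u_{\Omega,\omega}|\leq |h-h_{\Omega,\omega}|$, and integrating the $p$-th powers against $\omega\,d\mu$ gives
\[
\|u-u_{\Omega,\omega}\|_{p,\Omega,\omega}\leq \|h-h_{\Omega,\omega}\|_{p,\Omega,\omega}\leq 2C\,\|v\|_{p,\Omega,\omega},
\]
which is the claim after absorbing the factor $2$ into the constant.

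There is essentially no obstacle here: the heavy lifting sits entirely in Theorem \ref{thm-2c}. The only minor point I would verify explicitly is that the averages $h_{\Omega,\omega}$ and $u_{\Omega,\omega}$ are well-defined, i.e.\ that $h\in L^{1}(\Omega,\omega)$. This follows because every $\delta$-John domain is bounded, so $\omega\in L_{loc}^{1}(\mathbb{R}^{2n})$ is in fact integrable on $\Omega$; Hölder's inequality applied to $h\in L^{p}(\Omega,\omega)$ and $\omega\in L^{1}(\Omega)$ then yields $h\in L^{1}(\Omega,\omega)$, and similarly for $u$.
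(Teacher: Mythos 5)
Your proposal is correct and matches the paper's own derivation, which likewise obtains Corollary \ref{cor-2d} simply by setting $g=-h$ in Theorem \ref{thm-2c} so that $f=h-\overline{h}=2iv$. The details you supply (linearity of the weighted average so that $\operatorname{Re}(h-h_{\Omega,\omega})=u-u_{\Omega,\omega}$, and the finiteness of the averages, which the paper already secures in Lemma \ref{Lemx-3}) are exactly the routine verifications the paper leaves implicit.
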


\begin{Rem}
Iwaniec and Nolder \cite[Definition 1]{I-N} introduced domains with
a chain condition $\Omega \in \mathscr{F}(\sigma, N)$ with
$\sigma>1$, $N\geq 1$. These classes contain many important types of
domains in the Euclidean space such as cubes, balls and John
domains. Every $\Omega \in \mathscr{F}(\sigma, N)$ is bounded.
Theorems \ref{thm-2}, \ref{thm-2c} and the above corollaries can be
generalized to domains $\Omega \in \mathscr{F}(\sigma, N)$ with
$\sigma>1$, $N\geq 1$.
\end{Rem}

\subsection{The Hardy-Littlewood type theorems and smooth moduli}
A continuous increasing function
$\psi:[0,\infty)\rightarrow[0,\infty)$ with $\psi(0)=0$ is called a
majorant if $\psi(t)/t$ is non-increasing for $t>0$ (see
\cite{Dy1,Dy2}). For $\delta_{0}>0$ and $0<\delta<\delta_{0}$, we
consider the following conditions on a majorant $\psi$:
\begin{equation}\label{eq2x} \int_{0}^{\delta}\frac{\psi(t)}{t}\,dt\leq C
\psi(\delta)
 \end{equation} and
 \begin{equation}\label{eq3x}
\delta\int_{\delta}^{\infty}\frac{\psi(t)}{t^{2}}\,dt\leq C
\psi(\delta), \end{equation}
 where $C$ denotes a positive
constant. A majorant $\psi$ is henceforth  called fast (resp. slow)
if condition (\ref{eq2x}) (resp. (\ref{eq3x}) ) is fulfilled. In
particular, a majorant $\psi$ is said to be  regular if it satisfies
the conditions (\ref{eq2x}) and (\ref{eq3x}) (see
\cite{Dy1,Dy2,Pa}).

Given a majorant $\psi$ and a proper subdomain  $\Omega^{\ast}$ of
$\mathbb{R}^m$ or $\mathbb{C}^{n}$, a function $f$ from
$\Omega^{\ast}$ into $\mathbb{C}$ or $\mathbb{R}:=(-\infty,\infty)$
is said to belong to the  Lipschitz  space
$\mathscr{L}_{\psi}(\Omega^{\ast})$
 if there is a positive constant $C:=C(f)$ such that

 \begin{equation}\label{rrt-1}|f(z)-f(w)| \leq\,C\psi\left(|z-w|\right), \quad z,w
\in \Omega^{\ast}.
\end{equation} Note that if $f\in
\mathscr{L}_{\psi}(\Omega^{\ast})$, then $f$ is continuous on
$\overline{\Omega^{\ast}}$ and (\ref{rrt-1}) holds for $z,w \in
\overline{\Omega^{\ast}}$ (see \cite{Dy2}).
 In particular, we say that
a function $f$ belongs to the  local Lipschitz space
$\mbox{loc}\mathscr{L}_{\psi}(\Omega^{\ast})$ if (\ref{rrt-1})
holds, with a fixed positive constant $C$, whenever $z\in
\Omega^{\ast}$ and $|z-w|<\frac{1}{2}d_{\Omega^{\ast}}(z)$ (cf.
\cite{Dy2,GM,L}). Moreover, $\Omega^{\ast}$ is called an
$\mathscr{L}_{\psi}$-extension domain if
$\mathscr{L}_{\psi}(\Omega^{\ast})=\mbox{loc}\mathscr{L}_{\psi}(\Omega^{\ast}).$
On the geometric characterization of $\mathscr{L}_{\psi}$-extension
domains, see
 \cite{GM}. In \cite{L}, Lappalainen
generalized the characterization of \cite{GM}, and proved that
$\Omega^{\ast}$ is a $\mathscr{L}_{\psi}$-extension domain if and
only if each pair of points $z,w\in \Omega^{\ast}$ can be joined by
a rectifiable curve $\gamma\subset \Omega^{\ast}$ satisfying

 \begin{equation}\label{eq1.0}
\int_{\gamma}\frac{\psi(d_{\Omega^{\ast}}(\zeta))}{d_{\Omega^{\ast}}(\zeta)}\,ds(\zeta)
\leq C\psi(|z-w|)
\end{equation} with some fixed positive constant
$C=C(\Omega^{\ast},\psi)$, where $ds$ stands for the arc length
measure on $\gamma$.  Furthermore, Lappalainen \cite[Theorem
4.12]{L} proved that $\mathscr{L}_{\psi}$-extension domains  exist
only for fast majorants $\psi$. In particular, $\mathbb{B}^{n}$ is
an $\mathscr{L}_{\psi}$-extension domain of $\mathbb{C}^n$ for fast
majorant $\psi$ (see \cite{Dy2}).

Given two sets $E_{1}$ and $E_{2}$ in $\mathbb{R}^m$ or
$\mathbb{C}^{n}$, we write $\mathscr{L}_{\psi}(E_{1},E_{2})$ for the
class of those continuous functions $f$ in $E_{1}\cup\,E_{2}$ which
satisfy (\ref{rrt-1}), with some positive constant $C=C(f)$,
whenever $z\in\,E_{1}$ and $w\in\,E_{2}$ (see \cite{Dy2}).

In \cite{Dy1}, Dyakonov gave some characterizations of the
holomorphic functions of the class $\mathscr{L}_{\psi}(\mathbb{D})$
in terms of their moduli. Let's recall one of the important findings
in \cite{Dy1} as follows (see also \cite[Theorem A]{Pa}).

\begin{ThmD}\label{Dya-1}
Let $\psi$ be a regular majorant and $f$ be a holomorphic function
in $\mathbb{D}$. Then $f\in\mathscr{L}_{\psi}(\mathbb{D})$ if and
only if $|f|\in\mathscr{L}_{\psi}(\mathbb{D})$.
\end{ThmD}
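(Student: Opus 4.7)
The ``only if'' direction is immediate from the reverse triangle inequality $\big||f(z)|-|f(w)|\big|\le |f(z)-f(w)|$. For the nontrivial ``if'' implication, my plan follows the classical route of Hardy--Littlewood/Dyakonov: first extract a pointwise derivative bound for $f$ from the modulus Lipschitz hypothesis, then promote that bound to a local Lipschitz condition, and finally upgrade it to a global one via the extension property of $\mathbb{D}$.

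\textbf{Step 1 (pointwise derivative estimate).} The key claim is that, under the hypothesis $|f|\in\mathscr{L}_{\psi}(\mathbb{D})$ with Lipschitz constant $C_{0}$, one has
\begin{equation}\label{pf-plan-grad}
|f'(z)|\le C\,\frac{\psi(1-|z|)}{1-|z|},\qquad z\in\mathbb{D}.
\end{equation}
Fix $z_{0}\in\mathbb{D}$, set $r=(1-|z_{0}|)/2$, and write $D=\mathbb{B}^{1}(z_{0},r)$; by hypothesis $\big||f(\zeta)|-|f(z_{0})|\big|\le C_{0}\psi(1-|z_{0}|)$ on $\overline{D}$. I would then split into two cases. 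In the \emph{small-modulus} case $|f(z_{0})|\le 2C_{0}\psi(1-|z_{0}|)$, the bound $|f|\le 3C_{0}\psi(1-|z_{0}|)$ on $D$ together with Cauchy's estimate $|f'(z_{0})|\le r^{-1}\sup_{\partial D}|f|$ yields (\ref{pf-plan-grad}) at once. In the \emph{large-modulus} case $|f(z_{0})|>2C_{0}\psi(1-|z_{0}|)$, one has $|f|\ge |f(z_{0})|/2>0$ on $D$, so $\log|f|$ is harmonic there and, by the mean-value theorem applied to $\log$,
\[
\big|\log|f(\zeta)|-\log|f(z_{0})|\big|\le \frac{2C_{0}\psi(1-|z_{0}|)}{|f(z_{0})|},\qquad \zeta\in D.
\]
The standard gradient--oscillation estimate for harmonic functions then gives $|\nabla\log|f|(z_{0})|\le C\psi(1-|z_{0}|)/(|f(z_{0})|(1-|z_{0}|))$. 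Since for the holomorphic branch $g=\log f$ on $D$ one has $|g'|=|\nabla\operatorname{Re}(g)|$, i.e.\ $|f'(z_{0})|/|f(z_{0})|=|\nabla\log|f|(z_{0})|$, multiplying through by $|f(z_{0})|$ again yields (\ref{pf-plan-grad}).

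\textbf{Step 2 (local-to-global).} Using (\ref{pf-plan-grad}), for $z\in\mathbb{D}$ and $w$ with $|w-z|\le (1-|z|)/2$, integration along the segment $[z,w]$ combined with the monotonicity of $\psi(t)/t$ gives
\[
|f(z)-f(w)|\le C\,|z-w|\,\frac{\psi(1-|z|)}{1-|z|}\le C\,\psi(|z-w|),
\]
so $f\in\mbox{loc}\mathscr{L}_{\psi}(\mathbb{D})$. Since $\psi$ is regular and hence fast, the Lappalainen characterisation (\ref{eq1.0}) guarantees that $\mathbb{D}$ is an $\mathscr{L}_{\psi}$-extension domain, whence $\mbox{loc}\mathscr{L}_{\psi}(\mathbb{D})=\mathscr{L}_{\psi}(\mathbb{D})$ and therefore $f\in\mathscr{L}_{\psi}(\mathbb{D})$.

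The main obstacle is Step~1: the hypothesis controls only $|f|$, not the argument of $f$, so a direct Cauchy estimate for $f$ itself is unavailable. The dichotomy based on the size of $|f(z_{0})|$ relative to $\psi(1-|z_{0}|)$ is the crucial device --- far from the zero set of $f$ the harmonicity of $\log|f|$ effectively upgrades the modulus control to derivative control, while near the zero set $|f|$ itself is so small that the naive size estimate suffices.
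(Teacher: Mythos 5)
Your proof is correct. Both cases of Step 1 work: near the zero set the trivial bound $|f|\le 3C_{0}\psi(1-|z_{0}|)$ plus Cauchy's estimate suffices, and away from it the harmonicity of $\log|f|$ on the zero-free disk, the oscillation bound $\bigl|\log|f(\zeta)|-\log|f(z_{0})|\bigr|\le 2C_{0}\psi(1-|z_{0}|)/|f(z_{0})|$, the interior gradient estimate for harmonic functions, and the identity $|f'/f|=|\nabla\log|f||$ combine exactly as you say; Step 2 is the standard local-to-global passage. Note, however, that the paper does not prove Theorem D at all --- it is quoted from Dyakonov's 1997 paper and Pavlovi\'c's simplification --- so the relevant comparison is with the analogous implications the paper does prove, namely $(\mathscr{C}_{3})\Rightarrow(\mathscr{C}_{1})$ in Theorem \ref{thm-3} and its counterparts in Theorems \ref{thm-4} and \ref{Har-1}. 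There the modulus hypothesis is upgraded to a gradient bound by a dichotomy-free mechanism: setting $M_{z}=\sup\{|u(\xi)|:|\xi-z|<d(z)\}$, one bounds $M_{z}-|u(z)|\le C\psi(d(z))$ via a nearest boundary point, and then applies the sharp Schwarz--Pick inequality (Theorem M, resp.\ N and O) to the normalized function $u(z+d(z)w)/M_{z}$ to obtain $d(z)|\nabla u(z)|\le\frac{8}{\pi}(M_{z}-|u(z)|)$. That is the Pavlovi\'c route, tailored to the modulus of a \emph{real} (pluri)harmonic function; your route is Dyakonov's original one for the modulus $|f|$ of the holomorphic function itself, where the zero set forces the small/large dichotomy and $\log|f|$ plays the role that the Schwarz--Pick lemma plays in the paper. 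Both approaches share the final step through the $\mathscr{L}_{\psi}$-extension property of $\mathbb{D}$, which is where condition (\ref{eq2x}) enters; your argument never invokes (\ref{eq3x}), consistent with the fact (Theorem E) that only the fast half of regularity is actually needed.
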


Later, Pavlovi\'c \cite{Pa} came up with a relatively simple proof
of Theorem D. By using Pavlovi\'c's method, in conjunction with
other techniques, Dyakonov \cite{Dy2}  generalized Theorem D into
the following form.

\begin{ThmE}{\rm (\cite[Theorem 1]{Dy2})}\label{Dya-2}
Let $\psi$ be a fast majorant, and let $\mathscr{E}$ be an
$\mathscr{L}_{\psi}$-extension domain of $\mathbb{C}^{n}$. Suppose
that $f$ is a holomorphic function  in $\mathscr{E}$. Then
$f\in\mathscr{L}_{\psi}(\mathscr{E})$$\Leftrightarrow$$|f|\in\mathscr{L}_{\psi}(\mathscr{E})$
$\Leftrightarrow$$|f|\in\mathscr{L}_{\psi}(\mathscr{E},\partial\mathscr{E})$.
\end{ThmE}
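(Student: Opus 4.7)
The plan is to close the cycle $f \in \mathscr{L}_{\psi}(\mathscr{E}) \Rightarrow |f| \in \mathscr{L}_{\psi}(\mathscr{E}) \Rightarrow |f| \in \mathscr{L}_{\psi}(\mathscr{E}, \partial\mathscr{E}) \Rightarrow f \in \mathscr{L}_{\psi}(\mathscr{E})$. The first arrow is the reverse triangle inequality $\bigl||f(z)|-|f(w)|\bigr| \le |f(z)-f(w)|$. The second follows from the remark after (\ref{rrt-1}): any function in $\mathscr{L}_\psi(\mathscr{E})$ is uniformly continuous, extends continuously to $\overline{\mathscr{E}}$, and the Lipschitz estimate survives the passage to the closure, so in particular it holds for pairs with one point in $\mathscr{E}$ and the other in $\partial \mathscr{E}$. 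The substance is therefore concentrated in the final implication.

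For $|f| \in \mathscr{L}_\psi(\mathscr{E}, \partial\mathscr{E}) \Rightarrow f \in \mathscr{L}_\psi(\mathscr{E})$, the core is the pointwise gradient bound
\[
|\nabla f(z)| \,\le\, C\,\frac{\psi(d_{\mathscr{E}}(z))}{d_{\mathscr{E}}(z)}, \qquad z \in \mathscr{E}.
\]
Once this is in hand, take $z \in \mathscr{E}$ and $w$ with $|z-w| < \tfrac{1}{2} d_{\mathscr{E}}(z)$; the segment from $z$ to $w$ lies in $\mathscr{E}$ with $d_{\mathscr{E}} \ge \tfrac{1}{2} d_{\mathscr{E}}(z)$ all along, so integrating the bound gives $|f(z)-f(w)| \le C\,\psi(d_{\mathscr{E}}(z))\,|z-w|/d_{\mathscr{E}}(z) \le C\,\psi(|z-w|)$, the last step using the non-increase of $t \mapsto \psi(t)/t$. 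Hence $f \in \mathrm{loc}\mathscr{L}_\psi(\mathscr{E})$, and since $\mathscr{E}$ is an $\mathscr{L}_\psi$-extension domain this upgrades to $f \in \mathscr{L}_\psi(\mathscr{E})$.

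To prove the gradient bound I would invoke a Pavlovi\'c-type pointwise lemma: for $F$ holomorphic on a ball $\mathbb{B}^n(z_0, r)$,
\[
|\nabla F(z_0)| \,\le\, \frac{C}{r}\,\sup_{\zeta \in \mathbb{B}^n(z_0, r/2)} \bigl||F(\zeta)| - |F(z_0)|\bigr|.
\]
In one complex variable this is the refinement used by Pavlovi\'c in his proof of Theorem D; in $\mathbb{C}^n$ it reduces to the one-variable case by restricting $F$ to complex lines through $z_0$ and estimating each partial derivative via Cauchy's formula. Apply the lemma with $F = f$ and $r = d_{\mathscr{E}}(z)$, and pick a nearest boundary point $w_0 \in \partial \mathscr{E}$; then for $\zeta$ with $|\zeta - z| < r/2$, routing through $w_0$,
\[
\bigl||f(\zeta)|-|f(z)|\bigr| \,\le\, \bigl||f(\zeta)|-|f(w_0)|\bigr| + \bigl||f(w_0)|-|f(z)|\bigr| \,\le\, C\bigl(\psi(3r/2) + \psi(r)\bigr) \,\le\, C\psi(r),
\]
using the hypothesis (both pairs are interior-to-boundary) and the doubling $\psi(2t) \le 2\psi(t)$ that follows from $\psi(t)/t$ non-increasing. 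This yields the gradient bound.

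The principal obstacle is the Pavlovi\'c-type lemma itself. The delicate case is when $|F(z_0)|$ is comparable to $r|\nabla F(z_0)|$, where Cauchy's estimate applied to $F - F(z_0)$ does not directly convert the oscillation of $F$ into oscillation of $|F|$. The standard way out exploits the fact that $|F|^2$ is plurisubharmonic for holomorphic $F$, with Levi form dominated below by $|\nabla F|^2$, so that the oscillation of $|F|$ on a ball of radius $r$ is bounded below by a controlled multiple of $r|\nabla F(z_0)|$; the one-variable details are carried out in \cite{Pa}, and their adaptation to $\mathbb{B}^n$ is indicated in \cite{Dy2}. Granted this technical input and the observation that (\ref{eq2x}) plus the non-increase of $\psi(t)/t$ control all the majorant arithmetic that appears, the rest of the proof is bookkeeping.
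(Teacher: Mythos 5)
The paper does not actually prove Theorem E: it is quoted verbatim from Dyakonov \cite[Theorem 1]{Dy2} and used as a black box in the proof of Theorem \ref{thm-3}, so there is no in-paper argument to compare against. Your proposal is, in substance, a correct reconstruction of Dyakonov's original proof. The two cheap implications are handled exactly as you say (reverse triangle inequality; continuous extension of a $\psi$-Lipschitz function to $\overline{\mathscr{E}}$, as in the remark following (\ref{rrt-1})), and the essential implication $|f|\in\mathscr{L}_{\psi}(\mathscr{E},\partial\mathscr{E})\Rightarrow f\in\mathscr{L}_{\psi}(\mathscr{E})$ is obtained, as in \cite{Dy2}, by (i) a Pavlovi\'c-type bound of the gradient by the oscillation of the modulus, (ii) routing through a nearest boundary point together with $\psi(2t)\le2\psi(t)$ to convert the interior-to-boundary hypothesis into $\sup_{|\zeta-z|<d_{\mathscr{E}}(z)/2}\bigl||f(\zeta)|-|f(z)|\bigr|\le C\psi(d_{\mathscr{E}}(z))$, and (iii) integration along segments plus the $\mathscr{L}_{\psi}$-extension property to pass from $\mathrm{loc}\mathscr{L}_{\psi}$ to $\mathscr{L}_{\psi}$. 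This is the same circle of ideas the present paper redeploys for pluriharmonic and harmonic functions in Theorems \ref{thm-3}--\ref{Har-1}.

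The one soft spot is your sketch of the key lemma itself. The slicing along complex lines to reduce $\mathbb{C}^n$ to one variable is fine, but in the delicate case you identify (namely $|F(z_0)|$ large compared with $r|\nabla F(z_0)|$) the plurisubharmonicity of $|F|^2$ does not close the argument: the sub-mean-value inequality for $|F|^2$ only gives $\bigl(\sup|F|-|F(z_0)|\bigr)\bigl(\sup|F|+|F(z_0)|\bigr)\ge c\,r^2|\nabla F(z_0)|^2$, which degenerates to a bound quadratic in the oscillation exactly when $|F(z_0)|$ dominates (test $F(w)=1+\varepsilon w$). The mechanism that actually works in \cite{Pa} is simpler: normalize so that $F(z_0)=|F(z_0)|\ge0$, note that $\re F\le|F|\le|F(z_0)|+M$ on the ball, where $M$ is the one-sided oscillation of $|F|$, and apply the Schwarz--Borel--Carath\'eodory estimate $r\,|F'(z_0)|\le 2M$ for holomorphic functions whose real part is bounded above. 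Since you defer the details of the lemma to \cite{Pa} and \cite{Dy2}, and the lemma is true as stated, this does not invalidate your proof; but the heuristic you offer for why it holds is not the one that carries the proof through.
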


If $f=u+iv$ is a holomorphic function in $\mathbb{D}$, and if
$u={\rm Re}(f)\in\mathscr{L}_{\psi_{\alpha}}(\mathbb{D})$, then so
does $v$ (and hence $f$), with the same $\alpha\in  (0,1]$, where
$\psi_{\alpha}(t)=t^{\alpha}~(t\geq0)$. This fact, proved in
\cite{HL}, is known as the Hardy-Littlewood theorem on conjugate
functions. We remark, however, that in the case $\alpha\in(0,1)$ an
earlier-and essentially equivalent- result of Privalov \cite{P}
tells us that harmonic conjugation preserves the Lipschitz class
$\mathscr{L}_{\psi_{\alpha}}(\mathbb{T})$ of the circle
$\mathbb{T}:=\partial\mathbb{D}=\mathbb{S}^{0}$.
 In \cite{Dy2}, Dyakonov  extended
the Hardy-Littlewood theorem to the high-dimensional case as follows
(see \cite[Ineq. (2.3)]{Dy2}).

\begin{ThmF}\label{Dy-1997} Let $\psi$ be a fast majorant, and let $\mathscr{E}$ be an   $\mathscr{L}_{\psi}$-extension domain of $\mathbb{C}^{n}$.
If $u+iv$ is a bounded  holomorphic function in $\mathscr{E}$, then
$u\in\mathscr{L}_{\psi}(\mathscr{E})\Rightarrow\,v\in\mathscr{L}_{\psi}(\mathscr{E})$.
\end{ThmF}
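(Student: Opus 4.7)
The plan is to reduce the claim to a pointwise interior gradient bound on $v$ and then integrate along a short line segment. Since $\mathscr{E}$ is an $\mathscr{L}_{\psi}$-extension domain, $\mathscr{L}_{\psi}(\mathscr{E})=\mbox{loc}\mathscr{L}_{\psi}(\mathscr{E})$, so it is enough to verify $|v(z)-v(w)|\le C\psi(|z-w|)$ only in the local regime $|z-w|<\tfrac{1}{2}d_{\mathscr{E}}(z)$; the extension property then promotes this to global membership in $\mathscr{L}_{\psi}(\mathscr{E})$.

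Fix $z\in\mathscr{E}$ and set $r=d_{\mathscr{E}}(z)/2$, so that $\overline{\mathbb{B}^{n}(z,r)}\subset\mathscr{E}$. Three ingredients combine to bound the real gradient of $v$. First, the Lipschitz hypothesis on $u$ gives $|u(\zeta)-u(z)|\le C_{u}\psi(|\zeta-z|)\le C_{u}\psi(r)$ on that ball. Second, the Cauchy-Riemann equations $\partial u/\partial x_{j}=\partial v/\partial y_{j}$ and $\partial u/\partial y_{j}=-\partial v/\partial x_{j}$ for the holomorphic $f=u+\ii v$ (writing $z_{j}=x_{j}+\ii y_{j}$) realize $\nabla v$ as a signed permutation of $\nabla u$, so $|\nabla v(z)|=|\nabla u(z)|$. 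Third, $u$ is pluriharmonic, hence harmonic in the real sense on $\mathscr{E}$, so the standard interior Cauchy estimate for harmonic functions yields $|\nabla u(z)|\le (C/r)\sup_{\mathbb{B}^{n}(z,r)}|u-u(z)|$. Chaining the three bounds and using that $\psi$ is non-decreasing produces
$$
|\nabla v(z)|\le C\,\frac{\psi(r)}{r}\le C\,\frac{\psi(d_{\mathscr{E}}(z))}{d_{\mathscr{E}}(z)}.
$$

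With this pointwise bound in hand, for any $w$ with $|z-w|<\tfrac12 d_{\mathscr{E}}(z)$ the segment $\zeta(t)=z+t(w-z)$, $t\in[0,1]$, lies inside $\overline{\mathbb{B}^{n}(z,r)}$ and satisfies $d_{\mathscr{E}}(\zeta(t))\ge d_{\mathscr{E}}(z)/2\ge|z-w|$. The non-increase of $\psi(s)/s$ then gives $\psi(d_{\mathscr{E}}(\zeta(t)))/d_{\mathscr{E}}(\zeta(t))\le\psi(|z-w|)/|z-w|$, so integrating the gradient bound along the segment produces
$$
|v(z)-v(w)|\le|z-w|\sup_{t\in[0,1]}|\nabla v(\zeta(t))|\le C\,\psi(|z-w|),
$$
which is the desired local Lipschitz inequality. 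I expect the crux to be the gradient estimate of the middle paragraph: this is where the several-complex-variable Cauchy-Riemann structure is used to transfer the Lipschitz information from $u$ to $v$, coupled with the Poisson-type interior derivative bound for harmonic functions. The global boundedness of $f$ does not enter the interior argument explicitly, but it ensures that $v$ is a well-defined bounded harmonic companion of $u$ on all of $\mathscr{E}$, which is the natural setting in which the pointwise analysis above can be carried out when $\mathscr{E}$ is unbounded.
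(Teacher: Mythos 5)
Your proof is correct and follows essentially the same route as the paper, which establishes this implication (without the boundedness hypothesis) inside Theorem 2.10 via the chain $(\mathscr{C}_{1})\Rightarrow(\mathscr{C}_{2})\Leftrightarrow(\mathscr{C}_{6})\Rightarrow(\mathscr{C}_{5})$: a Poisson-kernel (interior Cauchy) gradient estimate for $u$, the identity $|\nabla u|=|\nabla v|$ from the Cauchy--Riemann equations, and integration of the gradient bound back to a Lipschitz estimate. The only cosmetic difference is that you invoke the defining property $\mathscr{L}_{\psi}(\mathscr{E})=\mbox{loc}\mathscr{L}_{\psi}(\mathscr{E})$ and integrate along a straight segment, whereas the paper integrates along the rectifiable curves furnished by Lappalainen's characterization (\ref{eq1.0}) of extension domains; these are equivalent.
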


Hardy and Littlewood  \cite{HL} proved the following theorem for
holomorphic functions with respect to the majorant
$\psi_{\alpha}(t)=t^{\alpha}~(t\geq0)$ as follows, where
$\alpha\in(0,1]$. For relevant research on harmonic functions, see
\cite{Kr,MM,Pa-2007}.

\begin{ThmG}{\rm (\cite[Theorem 40]{HL})}\label{Shiga}
Let $f$ be a holomorphic function in $\mathbb{D}$ and continuous up
to $\overline{\mathbb{D}}:=\mathbb{D}\cup\mathbb{T}$. Let
$\psi_{\alpha}(t)=t^{\alpha}~(t\geq0)$. Suppose that there exists
$\alpha\in(0,1]$ and a positive constant $C$ such that

$$|f(e^{i\theta_{1}})-f(e^{i\theta_{2}})|\leq\,C\psi_{\alpha}(|\theta_{1}-\theta_{2}|)~\mbox{for}~0\leq\theta_{1},~\theta_{2}<2\pi.$$
Then
$$|f'(z)|\leq\,C\frac{\psi_{\alpha}(d_{\mathbb{D}}(z))}{d_{\mathbb{D}}(z)}~\mbox{for}~z\in\mathbb{D}.$$
\end{ThmG}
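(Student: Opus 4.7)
The plan is to estimate $|f'(z)|$ via the Cauchy integral formula, using the boundary H\"older hypothesis to control the resulting kernel integral. First, for $|z|<1/2$ the conclusion is immediate from Cauchy's inequality on the circle $|\zeta-z|=1/4$, combined with the boundedness of $f$ on $\overline{\mathbb{D}}$ (which follows from its continuity). So I fix $z=re^{i\theta}$ with $r\in[1/2,1)$ and set $z^{\ast}=e^{i\theta}$, so that $d_{\mathbb{D}}(z)=1-r$.

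The key identity is
\[
f'(z)=\frac{1}{2\pi i}\int_{|\zeta|=1}\frac{f(\zeta)-f(z^{\ast})}{(\zeta-z)^{2}}\,d\zeta,
\]
obtained from Cauchy's formula for $f'$ by subtracting the constant $f(z^{\ast})$, which is legitimate because $\int_{|\zeta|=1}(\zeta-z)^{-2}\,d\zeta=0$ for $z\in\mathbb{D}$. Parametrizing $\zeta=e^{i(\theta+\varphi)}$ with $\varphi\in[-\pi,\pi]$, the boundary hypothesis yields $|f(\zeta)-f(z^{\ast})|\leq C|\varphi|^{\alpha}=C\psi_{\alpha}(|\varphi|)$, and the denominator obeys
\[
|\zeta-z|^{2}=|e^{i\varphi}-r|^{2}=(1-r)^{2}+4r\sin^{2}(\varphi/2)\geq (1-r)^{2}+c\varphi^{2}
\]
for some absolute constant $c>0$ (using $r\geq 1/2$ and $|\sin(\varphi/2)|\geq |\varphi|/\pi$ for $|\varphi|\leq\pi$). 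Combining these gives
\[
|f'(z)|\leq C\int_{0}^{\pi}\frac{\varphi^{\alpha}}{(1-r)^{2}+c\varphi^{2}}\,d\varphi.
\]

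Rescaling $\varphi=(1-r)s$ transforms the right-hand side into
\[
C(1-r)^{\alpha-1}\int_{0}^{\pi/(1-r)}\frac{s^{\alpha}}{1+cs^{2}}\,ds,
\]
and for $\alpha\in(0,1)$ the rescaled integral is uniformly bounded as $r\to 1^{-}$, since $s^{\alpha}/(1+cs^{2})$ is integrable at infinity. This delivers the target estimate $|f'(z)|\leq C'(1-r)^{\alpha-1}=C'\psi_{\alpha}(d_{\mathbb{D}}(z))/d_{\mathbb{D}}(z)$ throughout the subcritical range.

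The main obstacle is the endpoint $\alpha=1$, where the rescaled integral diverges logarithmically and the modulus estimate yields only $|f'(z)|=O(\log(1/d_{\mathbb{D}}(z)))$, which is strictly weaker than the bounded conclusion $\psi_{1}(d)/d=1$. To close this gap one must exploit the oscillation of the kernel $(\zeta-z)^{-2}$ rather than just its modulus: a natural route is to split the boundary integral at $|\varphi|\sim 1-r$, treat the near region with the trivial bound, and integrate by parts in the far region using $\frac{d}{d\zeta}(\zeta-z)^{-1}=-(\zeta-z)^{-2}$ to transfer the singular factor onto the boundary function, where the Lipschitz hypothesis then produces an absolutely integrable expression. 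This endpoint refinement is the technically most delicate part of the argument.
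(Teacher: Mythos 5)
The paper does not prove this statement; it is quoted verbatim from Hardy--Littlewood \cite{HL}, so there is no internal proof to compare against. Judged on its own merits, your argument is correct and complete for $\alpha\in(0,1)$: the subtracted Cauchy formula is legitimate (the kernel $(\zeta-z)^{-2}$ has a single-valued primitive on $\mathbb{T}$, so its integral vanishes), the lower bound $|\zeta-z|^{2}\geq(1-r)^{2}+c\varphi^{2}$ is right for $r\geq 1/2$, and the rescaled integral converges precisely because $\alpha<1$. The small-$|z|$ case is also fine (after replacing $f$ by $f-f(1)$ one can even make the constant depend only on $\alpha$ and the H\"older constant).

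The genuine gap is the endpoint $\alpha=1$, which the theorem explicitly includes and which you flag but do not close; moreover the fix you sketch does not work as described. If you split at $|\varphi|\sim 1-r$ and integrate by parts in the far region, you transfer the derivative onto the boundary function and obtain $\int_{1-r<|\varphi|<\pi}|g(\zeta)||\zeta-z|^{-1}\,d\varphi$ with $g$ bounded; taking absolute values there still yields $\int_{1-r}^{\pi}\varphi^{-1}\,d\varphi\sim\log\frac{1}{1-r}$, the same logarithm you were trying to kill. No size estimate on the kernel can succeed here, because the bounded conclusion at $\alpha=1$ genuinely uses analyticity (the one-sided spectrum): for a merely harmonic extension of a Lipschitz boundary function the gradient can indeed grow like $\log\frac{1}{1-r}$. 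A correct endpoint argument is, for instance: for $h>0$ the difference quotient $z\mapsto h^{-1}\bigl(f(e^{ih}z)-f(z)\bigr)$ is holomorphic in $\mathbb{D}$, continuous on $\overline{\mathbb{D}}$, and bounded by $C$ on $\mathbb{T}$ by hypothesis, hence bounded by $C$ in $\mathbb{D}$ by the maximum principle; letting $h\to 0^{+}$ gives $|zf'(z)|\leq C$, and together with your estimate for $|z|<1/2$ this yields $|f'|\leq C$ throughout $\mathbb{D}$, as required since $\psi_{1}(d)/d\equiv 1$. (Equivalently: the boundary function is absolutely continuous with $F'\in L^{\infty}$ and nonnegative spectrum, so $izf'(z)$ is the Poisson integral of $F'$.) Incorporating such an argument would complete the proof.
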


In fact, there is an essential connection among Theorems E, F and G.
By using  new techniques, in conjunction with some methods of
Dyakonov and Pavlovi\'c, we remove ``the boundedness condition" in
Theorem F and establish extensions of Theorems E, F and G and also a
connection among Theorems E, F and G as follows.

\begin{Thm}\label{thm-3}
 Let $\psi$ be a fast majorant, and let $\mathscr{E}$ be an   $\mathscr{L}_{\psi}$-extension domain of $\mathbb{C}^{n}$.
If $f=u+iv$ is holomorphic in $\mathscr{E}$,
 then  the following statements are equivalent, where $u$ and $v$ are real-valued pluriharmonic functions in $\mathscr{E}$.
\begin{enumerate}
\item[{\rm ($\mathscr{C}_{1}$)}]  $u\in\mathscr{L}_{\psi}(\mathscr{E})$;
\item[{\rm ($\mathscr{C}_{2}$)}] There is a positive constant $C$ such that $$|\nabla\,u(z)|\leq\,C\frac{\psi(d_{\mathscr{E}}(z))}{d_{\mathscr{E}}(z)},
\quad z\in\mathscr{E},$$ where
$z=(z_{1},\ldots,z_{n})=(x_{1}+iy_{1},\ldots,x_{n}+iy_{n})$ and
$$\nabla\,u(z)=(u_{x_{1}},u_{y_{1}}\ldots,u_{x_{n}},u_{y_{n}});$$
\item[{\rm ($\mathscr{C}_{3}$)}]  $|u|\in\mathscr{L}_{\psi}(\mathscr{E})$;
\item[{\rm ($\mathscr{C}_{4}$)}] If $n\geq2$, then, for a given  $p\geq\frac{2n-2}{2n-1}$, there is a positive constant $C$ such that
\begin{equation}\label{eq-rfc-1}I_{u,p}(z)\leq\,C\frac{\psi(d_{\mathscr{E}}(z))}{d_{\mathscr{E}}(z)},~z\in\mathscr{E},
\end{equation} where
$I_{u,p}(z):=|\mathbb{B}^{n}(z,d_{\mathscr{E}}(z)/2)|^{-1/p}\|\nabla\,u\|_{p,\mathbb{B}^{n}(z,d_{\mathscr{E}}(z)/2)}$.
Moreover, if $n=1$, then, for a given $p>0$, there is a positive
constant $C$ such that {\rm(\ref{eq-rfc-1})} holds;

\item[{\rm ($\mathscr{C}_{5}$)}]  $v\in\mathscr{L}_{\psi}(\mathscr{E})$;
\item[{\rm ($\mathscr{C}_{6}$)}] There is a positive constant $C$ such that
 $$|\nabla\,v(z)|\leq\,C\frac{\psi(d_{\mathscr{E}}(z))}{d_{\mathscr{E}}(z)},~z\in\mathscr{E};$$
\item[{\rm ($\mathscr{C}_{7}$)}]  $|v|\in\mathscr{L}_{\psi}(\mathscr{E})$;
\item[{\rm ($\mathscr{C}_{8}$)}] If $n\geq2$, then, for a given  $p\geq\frac{2n-2}{2n-1}$, there is a positive constant $C$ such that
\begin{equation}\label{eq-rfc-2}I_{v,p}(z)\leq\,C\frac{\psi(d_{\mathscr{E}}(z))}{d_{\mathscr{E}}(z)},~z\in\mathscr{E}.
\end{equation} Moreover, if $n=1$, then, for a given  $p>0$, there is
a positive constant $C$ such that {\rm(\ref{eq-rfc-2})} holds;
\item[{\rm ($\mathscr{C}_{9}$)}] $f\in\mathscr{L}_{\psi}(\mathscr{E})$;
\item[{\rm ($\mathscr{C}_{10}$)}] $|f|\in\mathscr{L}_{\psi}(\mathscr{E})$;
\item[{\rm ($\mathscr{C}_{11}$)}] $|f|\in\mathscr{L}_{\psi}(\mathscr{E},\partial\mathscr{E})$.
\end{enumerate}
\end{Thm}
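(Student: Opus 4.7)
The plan is to route the eleven-way equivalence through a handful of fundamental connections: the harmonic gradient estimate, the Cauchy--Riemann equations, the subharmonicity of $|\nabla u|^p$, the path characterization (\ref{eq1.0}) of an $\mathscr{L}_\psi$-extension domain, and the already-established Theorem E applied to the holomorphic function $f$ itself. The trivial implications $\mathscr{C}_1 \Rightarrow \mathscr{C}_3$, $\mathscr{C}_5 \Rightarrow \mathscr{C}_7$ (via the reverse triangle inequality $||a|-|b||\leq|a-b|$), and $\mathscr{C}_{10} \Rightarrow \mathscr{C}_{11}$ (restriction) require no comment.

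I would first prove $\mathscr{C}_1 \Leftrightarrow \mathscr{C}_2$. The forward direction uses the classical gradient bound for harmonic functions: applied to $u - u(z)$ on $\mathbb{B}^n(z, d_\mathscr{E}(z)/2)\subset\mathscr{E}$, the mean-value identity for partial derivatives gives $|\nabla u(z)| \leq C d_\mathscr{E}(z)^{-1}\operatorname{osc}_{\mathbb{B}^n(z, d_\mathscr{E}(z)/2)} u$, and $\mathscr{C}_1$ controls the oscillation by $C\psi(d_\mathscr{E}(z))$. The reverse direction is an integration step: given $z,w\in\mathscr{E}$, Lappalainen's criterion furnishes a rectifiable curve $\gamma\subset\mathscr{E}$ joining $z$ and $w$ with $\int_\gamma \psi(d_\mathscr{E}(\zeta))/d_\mathscr{E}(\zeta)\,ds(\zeta)\leq C\psi(|z-w|)$, and $|u(z) - u(w)| \leq \int_\gamma |\nabla u|\,ds$ closes the estimate. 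Next, the Cauchy--Riemann equations $\partial u/\partial x_j = \partial v/\partial y_j$ and $\partial u/\partial y_j = -\partial v/\partial x_j$ yield the pointwise identity $|\nabla u(z)| = |\nabla v(z)|$, so $\mathscr{C}_2 \Leftrightarrow \mathscr{C}_6$; combined with the previous step this gives $\mathscr{C}_1 \Leftrightarrow \mathscr{C}_2 \Leftrightarrow \mathscr{C}_6 \Leftrightarrow \mathscr{C}_5$.

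For $\mathscr{C}_2 \Leftrightarrow \mathscr{C}_4$ (and symmetrically $\mathscr{C}_6 \Leftrightarrow \mathscr{C}_8$) I would invoke the classical fact that, for a real-valued harmonic function on $\mathbb{R}^m$, $|\nabla u|^p$ is subharmonic whenever $p\geq (m-2)/(m-1)$; here $m = 2n$, producing exactly the threshold $(2n-2)/(2n-1)$ in the statement, while $n=1$ gives $m=2$ and the threshold is $0$, explaining the relaxed hypothesis $p>0$. Then $\mathscr{C}_2 \Rightarrow \mathscr{C}_4$ is a trivial sup-to-mean estimate, and $\mathscr{C}_4 \Rightarrow \mathscr{C}_2$ follows by the sub-mean-value inequality applied to $|\nabla u|^p$ on $\mathbb{B}^n(z, d_\mathscr{E}(z)/2)$ and taking $p$-th roots. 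Theorem E applied to $f$ delivers $\mathscr{C}_9 \Leftrightarrow \mathscr{C}_{10} \Leftrightarrow \mathscr{C}_{11}$, and $\mathscr{C}_1 \Leftrightarrow \mathscr{C}_9$ follows from $|f(z) - f(w)|^2 = |u(z)-u(w)|^2 + |v(z)-v(w)|^2$ together with the equivalences already established.

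The conceptually new step---and exactly what lets us discard the boundedness hypothesis used in Theorem F---is $\mathscr{C}_3 \Rightarrow \mathscr{C}_1$ (and symmetrically $\mathscr{C}_7 \Rightarrow \mathscr{C}_5$). For $z \in \mathscr{E}$ and $w$ with $|z-w| < d_\mathscr{E}(z)/2$, the segment $[z,w]$ lies in $\mathscr{E}$. If $u$ has constant sign on $[z,w]$, then $|u| = \pm u$ there and the Lipschitz bound of $|u|$ transfers verbatim to $u$. Otherwise continuity of $u$ forces a zero $z_0 \in [z,w]$, and then
\begin{equation*}
|u(z) - u(w)| \leq |u(z)| + |u(w)| = \bigl(|u(z)| - |u(z_0)|\bigr) + \bigl(|u(w)| - |u(z_0)|\bigr) \leq 2C\psi(|z-w|),
\end{equation*}
using the monotonicity of $\psi$ together with $\mathscr{C}_3$. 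This proves $u \in \mbox{loc}\mathscr{L}_\psi(\mathscr{E})$, whence $\mathscr{C}_1$ holds because $\mathscr{E}$ is an $\mathscr{L}_\psi$-extension domain. I expect this sign-change/zero-insertion argument to be the main obstacle; it is elementary but is the precise device that dispenses with the global boundedness of $f$ that Dyakonov had to assume.
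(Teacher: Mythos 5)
Your proposal is correct, and apart from one step it follows the same architecture as the paper: the cycle $(\mathscr{C}_{1})\Leftrightarrow(\mathscr{C}_{2})$ via a gradient estimate on $\mathbb{B}^{n}(z,d_{\mathscr{E}}(z)/2)$ plus integration along a Lappalainen curve, $(\mathscr{C}_{2})\Leftrightarrow(\mathscr{C}_{6})$ via $|\nabla u|=|\nabla v|$, $(\mathscr{C}_{2})\Leftrightarrow(\mathscr{C}_{4})$ via the Stein--Weiss subharmonicity of $|\nabla u|^{p}$ for $p\geq(2n-2)/(2n-1)$, and $(\mathscr{C}_{9})\Leftrightarrow(\mathscr{C}_{10})\Leftrightarrow(\mathscr{C}_{11})$ by citing Theorem E for $f$. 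The genuine divergence is in $(\mathscr{C}_{3})\Rightarrow(\mathscr{C}_{1})$. The paper first shows $M_{z}-|u(z)|\leq 3C\psi(d_{\mathscr{E}}(z))$, where $M_{z}$ is the sup of $|u|$ over $\mathbb{B}^{n}(z,d_{\mathscr{E}}(z))$ (using the extension of the Lipschitz bound to $\partial\mathscr{E}$ and $\psi(2t)\leq 2\psi(t)$), and then applies Kalaj's sharp Schwarz--Pick inequality for pluriharmonic functions (Theorem M) to the normalized function $u(z+d_{\mathscr{E}}(z)w)/M_{z}$ to deduce the gradient bound $(\mathscr{C}_{2})$. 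Your route instead uses the elementary intermediate-value argument: on a segment $[z,w]\subset\mathscr{E}$ with $|z-w|<\tfrac12 d_{\mathscr{E}}(z)$, either $u$ keeps a constant sign (so the bound for $|u|$ transfers verbatim to $u$) or $u$ vanishes at some $z_{0}\in[z,w]$ and $|u(z)-u(w)|\leq|u(z)|+|u(w)|\leq 2C\psi(|z-w|)$; this gives $u\in\mathrm{loc}\mathscr{L}_{\psi}(\mathscr{E})$ and the extension-domain property finishes. This argument is correct (it only needs continuity of the real-valued $u$ and monotonicity of $\psi$) and is strictly more elementary --- it bypasses the Schwarz--Pick machinery entirely and makes transparent why the real-valued case is so much easier than the genuinely hard modulus problem for complex-valued $f$ handled by Theorem E, where no sign-change device is available. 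What the paper's route buys is a direct derivation of the quantitative gradient bound $(\mathscr{C}_{2})$ from $(\mathscr{C}_{3})$ with an explicit constant $24C/\pi$, and it is the template the authors reuse for the internal-distance version (Theorem \ref{thm-4}) and the harmonic case (Theorem \ref{Har-1}); but for the equivalence itself your shortcut is perfectly adequate, since $(\mathscr{C}_{1})\Leftrightarrow(\mathscr{C}_{2})$ is already in hand.
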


Let $f$ be a holomorphic function or a quasiconformal mapping. It
follows from \cite{Dy1,Dy2,Dy3} that the implication
$$|f|\in\mathscr{L}_{\psi}(\Omega_{1})\Rightarrow\,f\in\mathscr{L}_{\psi}(\Omega_{1})$$
depends heavily on the geometry of the proper subdomain $\Omega_{1}$
of $\mathbb{C}^{n}$ or $\mathbb{R}^m$.  One might still look for a
universal theorem, relating the properties of $|f|$ to those of $f$,
that should work for all domains $\Omega_{1}$ of $\mathbb{C}^n$
 and all holomorphic functions $f$ in $\Omega_{1}$. In order to produce such a statement,
 as in Dyakonov \cite{Dy2},
 we modify the Lipschitz condition
 by using a suitable ``internal distance" in $\Omega_{1}$,
 where $\Omega_1$ a proper subdomain of $\mathbb{C}^n$ or $\mathbb{R}^m$.
 For $z_{1},~z_{2}\in\Omega_{1}$, let
 $$d_{\psi,\Omega_{1}}(z_{1},~z_{2}):=\inf\int_{\gamma}\frac{\psi(d_{\Omega_{1}}(z))}{d_{\Omega_{1}}(z)}\,ds(z),$$
where $\psi$ is a majorant and the infimum is taken over all
rectifiable curves $\gamma\subset\Omega_{1}$ joining $z_{1}$ to
$z_{2}$. Further, we write
$F\in\mathscr{L}_{\psi,{\rm\,int}}(\Omega_{1})$ to mean that there
is a positive constant $C=C(F)$ such that

$$|F(z_{1})-F(z_{2})|\leq\,Cd_{\psi,\Omega_{1}}(z_{1},~z_{2})$$
whenever $z_{1},~z_{2}\in\Omega_{1}$, where  $F$ is a function of
$\Omega_{1}$ into $\mathbb{C}$ or $\mathbb{R}$.

In \cite[Theorem 3]{Dy2}, by replacing the Euclidean distance with
the internal distance, Dyakonov obtained the following result.

\begin{ThmH}{\rm (\cite[Theorem 3]{Dy2})}\label{Dya-3}
Let $\psi$ be a fast majorant, and let $f$ be a holomorphic function
in a  domain $\Omega_{1}\subset\mathbb{C}^{n}$.  Then
$f\in\mathscr{L}_{\psi,{\rm\,int}}(\Omega_{1})$$\Leftrightarrow$$|f|\in\mathscr{L}_{\psi,{\rm\,int}}(\Omega_{1})$.
\end{ThmH}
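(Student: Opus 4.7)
The forward implication is immediate since $\bigl||f(z_{1})|-|f(z_{2})|\bigr|\leq|f(z_{1})-f(z_{2})|$, so any constant that works for $f$ also works for $|f|$. The substantive direction is the converse: assuming $|f|\in\mathscr{L}_{\psi,\mathrm{int}}(\Omega_{1})$, I would deduce $f\in\mathscr{L}_{\psi,\mathrm{int}}(\Omega_{1})$. My plan is to first establish the pointwise gradient estimate $|\nabla f(z)|\leq C\psi(d_{\Omega_{1}}(z))/d_{\Omega_{1}}(z)$ on $\Omega_{1}$, and then to integrate this bound along arbitrary rectifiable curves so as to recover the desired inequality against $d_{\psi,\Omega_{1}}$.

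To get the gradient bound, I would localize to the ball $B_{z_{0}}=\mathbb{B}^{n}(z_{0},r_{0})$ with $r_{0}=d_{\Omega_{1}}(z_{0})/4$. For any two points $w_{1},w_{2}\in B_{z_{0}}$ the Euclidean segment $[w_{1},w_{2}]$ lies inside $\Omega_{1}$ and along it $d_{\Omega_{1}}(\zeta)\geq d_{\Omega_{1}}(z_{0})/2\geq 2|w_{1}-w_{2}|$. Since $\psi(s)/s$ is non-increasing, this gives
\[
\frac{\psi(d_{\Omega_{1}}(\zeta))}{d_{\Omega_{1}}(\zeta)}\leq\frac{\psi(|w_{1}-w_{2}|)}{|w_{1}-w_{2}|},
\]
and integrating over $[w_{1},w_{2}]$ yields $d_{\psi,\Omega_{1}}(w_{1},w_{2})\leq\psi(|w_{1}-w_{2}|)$. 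Combined with the hypothesis on $|f|$, this shows that $|f|$ satisfies an ordinary Euclidean $\psi$-Lipschitz condition on $B_{z_{0}}$ with a constant independent of $z_{0}$.

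Since $B_{z_{0}}$ is an $\mathscr{L}_{\psi}$-extension domain for fast $\psi$, Theorem E applied to $B_{z_{0}}$ upgrades this to $f\in\mathscr{L}_{\psi}(B_{z_{0}})$ with a constant that is likewise uniform in $z_{0}$. Cauchy's integral estimate applied to the holomorphic function $f-f(z_{0})$ on a sub-ball of radius $r_{0}/2$ then gives
\[
|\nabla f(z_{0})|\leq\frac{C}{r_{0}}\sup_{|\zeta-z_{0}|\leq r_{0}/2}|f(\zeta)-f(z_{0})|\leq\frac{C\,\psi(r_{0}/2)}{r_{0}}\leq C'\,\frac{\psi(d_{\Omega_{1}}(z_{0}))}{d_{\Omega_{1}}(z_{0})},
\]
the last step using that $\psi$ is non-decreasing. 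For arbitrary $z_{1},z_{2}\in\Omega_{1}$ and any rectifiable curve $\gamma\subset\Omega_{1}$ joining them, integrating the gradient bound yields $|f(z_{1})-f(z_{2})|\leq C\int_{\gamma}\psi(d_{\Omega_{1}})/d_{\Omega_{1}}\,ds$, and passing to the infimum over $\gamma$ produces $|f(z_{1})-f(z_{2})|\leq C\,d_{\psi,\Omega_{1}}(z_{1},z_{2})$.

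The delicate point is the uniformity of the constant produced by Theorem E on the shrinking balls $B_{z_{0}}$. One must verify that Dyakonov's proof transfers uniformly under dilations of the ball, and here the fast-majorant hypothesis $\int_{0}^{\delta}\psi(t)/t\,dt\leq C\psi(\delta)$ is precisely what controls the behavior of the $\psi$-Lipschitz norm under rescaling. Once this uniformity is secured, the chain from local $\psi$-Lipschitz control of $|f|$ to a pointwise gradient bound and then to the internal Lipschitz inequality is routine, and no further structural hypothesis on $\Omega_{1}$ is needed.
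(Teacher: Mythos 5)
You should first note that the paper does not supply a proof of Theorem H: it is quoted from Dyakonov \cite{Dy2}, and the Remark immediately after it, backed by Proposition \ref{prop-1}, explicitly states that the hypothesis ``$\Omega_1$ bounded'' must be added. The in-paper argument closest to a proof is Theorem \ref{thm-4}, which generalizes Theorem H; there the hard implication is obtained by bounding $M_z-|u(z)|\le C\psi(d_{\Omega_1}(z))$ directly from the internal Lipschitz condition on the modulus (integrating along the radius of the Whitney ball and using \eqref{eq2x}) and then applying the Schwarz--Pick inequality of Theorem M to the rescaled function $u(z+d_{\Omega_1}(z)\cdot)/M_z$. Your skeleton --- reverse triangle inequality for the easy direction, pointwise gradient bound plus integration along rectifiable curves for the converse --- is the same; the difference is that you extract the gradient bound by invoking Theorem E as a black box on the shrinking balls $B_{z_0}$.

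That is precisely where the gap is, and it is not merely a technical formality. The $\mathscr{L}_{\psi}$-extension constant of a ball of radius $R$ is controlled by $\sup_{0<\delta\le 2R}\psi(\delta)^{-1}\int_0^{\delta}\psi(t)t^{-1}\,dt$, and the fast-majorant condition \eqref{eq2x} only bounds this for $\delta<\delta_0$; Proposition \ref{prop-1} exhibits a fast majorant for which the quantity tends to infinity with $\delta$. Since your balls have radius $d_{\Omega_1}(z_0)/4$, the constants you need are uniform in $z_0$ only if $\sup_{z_0}d_{\Omega_1}(z_0)<\infty$. Your closing claim that ``no further structural hypothesis on $\Omega_{1}$ is needed'' is therefore wrong: boundedness of $\Omega_1$ is exactly what rescues the ``delicate point'' you defer, and the theorem as quoted (without it) is the very defect the paper's Remark corrects. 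The clean repair is to bypass Theorem E altogether and argue as in the paper's proof of $(\mathscr{D}_{3})\Rightarrow(\mathscr{D}_{2})$: estimate $M_{z}-|f(z)|\le C\psi(d_{\Omega_1}(z))$ from the internal condition on $|f|$, then apply the Schwarz lemma to $f(z+d_{\Omega_1}(z)w)/M_{z}$ to get $d_{\Omega_1}(z)|\nabla f(z)|\le C(M_{z}-|f(z)|)$; this makes all constant dependences explicit and shows exactly where boundedness enters. (A minor slip besides: for $w_1,w_2\in B_{z_0}$ one only has $|w_1-w_2|<2r_0=d_{\Omega_1}(z_0)/2$, so the chain $d_{\Omega_1}(\zeta)\ge d_{\Omega_1}(z_0)/2\ge 2|w_1-w_2|$ can fail; but $d_{\Omega_1}(\zeta)\ge 3r_0\ge\tfrac{3}{2}|w_1-w_2|$ still yields $\psi(d_{\Omega_1}(\zeta))/d_{\Omega_1}(\zeta)\le\psi(|w_1-w_2|)/|w_1-w_2|$, so your conclusion there survives.)
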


\begin{Rem}
In fact, in view of Proposition \ref{prop-1}, we should assume that
$\Omega_1$  is bounded in Theorem H.
\end{Rem}

\begin{Pro}\label{prop-1}
There exists a fast majorant $\psi_0$ such that there does not exist
a constant $C>0$ such that
\[
\int_{0}^{r}\frac{\psi_0(t)}{t}\,dt\leq C \psi_0(r), \quad \forall
r>0.
\]
\end{Pro}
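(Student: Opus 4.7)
The plan is to construct an explicit majorant $\psi_0$ that grows linearly near $0$ (so the Dini-type condition (\ref{eq2x}) is trivially satisfied for small $\delta$, hence $\psi_0$ is fast) but grows so slowly at infinity that $\int_0^r \psi_0(t)/t\,dt$ outpaces $\psi_0(r)$ as $r\to\infty$. A concrete choice is
\[
\psi_0(t) := \begin{cases} t, & 0 \leq t \leq 1,\\ 1+\log t, & t > 1.\end{cases}
\]

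First, I would verify that $\psi_0$ is a majorant in the sense of the paper. Clearly $\psi_0(0)=0$, and both pieces take the value $1$ at $t=1$, so $\psi_0$ is continuous and strictly increasing. Moreover $\psi_0(t)/t$ equals $1$ on $(0,1]$, while on $(1,\infty)$ it equals $(1+\log t)/t$, whose derivative $-(\log t)/t^{2}$ is negative; hence $\psi_0(t)/t$ is non-increasing on $(0,\infty)$, as required.

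Second, I would check fastness: for $0 < \delta \leq 1$,
\[
\int_0^\delta \frac{\psi_0(t)}{t}\,dt = \int_0^\delta 1\,dt = \delta = \psi_0(\delta),
\]
so (\ref{eq2x}) holds with $C=1$ and $\delta_0 = 1$. Finally, to rule out the claimed global estimate, I would compute for $r>1$
\[
\int_0^r \frac{\psi_0(t)}{t}\,dt = 1 + \int_1^r \frac{1+\log t}{t}\,dt = 1 + \log r + \frac{(\log r)^2}{2},
\]
while $\psi_0(r) = 1+\log r$. Therefore
\[
\frac{\int_0^r \psi_0(t)/t\,dt}{\psi_0(r)} = 1 + \frac{(\log r)^2}{2(1+\log r)} \longrightarrow \infty \quad\text{as } r\to\infty,
\]
so no constant $C$ makes the global inequality hold for all $r>0$.

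There is essentially no obstacle here: the proposition simply records that fastness is a purely local condition at $0$, leaving complete freedom to sabotage the integral inequality at large $r$. Any majorant that is linear near $0$ and grows like (or more slowly than) a logarithm at infinity would do equally well; the explicit choice above just makes every step a one-line calculation.
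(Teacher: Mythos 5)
Your proof is correct and follows essentially the same strategy as the paper's: both construct an explicit majorant that is linear near the origin (making fastness trivial) and logarithmic at infinity (the paper glues $t/e$ to $\ln t$ at $t=e$, you glue $t$ to $1+\log t$ at $t=1$), then show the ratio $\int_0^r\psi_0(t)t^{-1}\,dt\,/\,\psi_0(r)$ grows like $\log r$. All your verifications (monotonicity of $\psi_0(t)/t$, the fastness constant, and the divergence computation) check out.
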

\begin{proof} Let
\[
\psi_0(t)=\left\{
\begin{array}{ll}
\frac{1}{e}t, & 0\leq t\leq e, \\
\ln t, & t>e.
\end{array}
\right.
\]
Then $\psi_0$ is a majorant and
\[
\int_0^{\delta}\frac{\psi_0(t)}{t}\,dt=\frac{1}{e}\delta=\psi_0(\delta),
\quad 0<\delta<e.
\]
So, $\psi_0$ is a fast majorant. On the other hand, for $r>e$, we
have
\[
\int_{0}^{r}\frac{\psi_0(t)}{t}\,dt=1+\int_{e}^{r}\frac{\ln
t}{t}\,dt=\frac{1}{2}+\frac{1}{2}(\ln r)^2
>\frac{1}{2}(\ln r) \psi_0(r).
\]
The proof of this proposition is finished.
\end{proof}

By  replacing the Euclidean distance with the internal distance, we
establish extensions of Theorems F, G and H and also a connection
among Theorems F, G and H as follows.

\begin{Thm}\label{thm-4}
 Let $\psi$ be a fast majorant, and let $\Omega_{1}$ be a bounded domain of $\mathbb{C}^{n}$.
If $u$ and $v$ are real-valued pluriharmonic functions in
$\Omega_{1}$ with $f=u+iv$  holomorphic on $\Omega_1$, then  the
following statements are equivalent.
\begin{enumerate}
\item[{\rm ($\mathscr{D}_{1}$)}]  $u\in\mathscr{L}_{\psi,{\rm\,int}}(\Omega_{1})$;
\item[{\rm ($\mathscr{D}_{2}$)}]
There is a positive constant $C$ such that
$$|\nabla\,u(z)|\leq\,C\frac{\psi(d_{\Omega_{1}}(z))}{d_{\Omega_{1}}(z)},
\quad z\in\Omega_{1};$$
\item[{\rm ($\mathscr{D}_{3}$)}]  $|u|\in\mathscr{L}_{\psi,{\rm\,int}}(\Omega_{1})$;
\item[{\rm ($\mathscr{D}_{4}$)}] If $n\geq2$, then, for a given  $p\geq\frac{2n-2}{2n-1}$, there is a positive constant $C$ such that
\begin{equation}\label{eq-rfc-1-1}
I_{u,p}(z)\leq\,C\frac{\psi(d_{\Omega_{1}}(z))}{d_{\Omega_{1}}(z)},~z\in\Omega_{1}.
\end{equation}
Moreover, if $n=1$, then, for a given $p>0$, there is a positive
constant $C$ such that {\rm(\ref{eq-rfc-1-1})} holds;

\item[{\rm ($\mathscr{D}_{5}$)}]  $v\in\mathscr{L}_{\psi,{\rm\,int}}(\Omega_{1})$;
\item[{\rm ($\mathscr{D}_{6}$)}] There is a positive constant $C$ such that
 $$|\nabla\,v(z)|\leq\,C\frac{\psi(d_{\Omega_{1}}(z))}{d_{\Omega_{1}}(z)},~z\in\Omega_{1};$$
\item[{\rm ($\mathscr{D}_{7}$)}]  $|v|\in\mathscr{L}_{\psi,{\rm\,int}}(\Omega_{1})$;
\item[{\rm ($\mathscr{D}_{8}$)}] If $n\geq2$, then, for a given  $p\geq\frac{2n-2}{2n-1}$, there is a positive constant $C$ such that
\begin{equation}\label{eq-rfc-2-1}
I_{v,p}(z)\leq\,C\frac{\psi(d_{\Omega_{1}}(z))}{d_{\Omega_{1}}(z)},~z\in\Omega_{1}.
\end{equation}
Moreover, if $n=1$, then, for a given  $p>0$, there is a positive
constant $C$ such that {\rm(\ref{eq-rfc-2-1})} holds;
\item[{\rm ($\mathscr{D}_{9}$)}] $f\in\mathscr{L}_{\psi,{\rm\,int}}(\Omega_{1})$;
\item[{\rm ($\mathscr{D}_{10}$)}]
$|f|\in\mathscr{L}_{\psi,{\rm\,int}}(\Omega_{1})$.

\end{enumerate}
\end{Thm}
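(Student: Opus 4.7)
The strategy mirrors that of Theorem \ref{thm-3}, with the internal distance $d_{\psi,\Omega_1}$ playing the role of the Euclidean distance, so that the bounded-domain hypothesis (together with the warning of Proposition \ref{prop-1}) replaces the extension-domain requirement. The key geometric input is a local compatibility: for $z\in\Omega_1$ and $w\in\mathbb{B}^n(z,d_{\Omega_1}(z)/2)$, integration along the segment $[z,w]$, combined with the fact that $d_{\Omega_1}(\zeta)$ is pinched between $d_{\Omega_1}(z)/2$ and $3d_{\Omega_1}(z)/2$ on that segment and with the non-increasing monotonicity of $\psi(t)/t$, yields
$$
d_{\psi,\Omega_1}(z,w)\;\leq\;\int_{[z,w]}\frac{\psi(d_{\Omega_1}(\zeta))}{d_{\Omega_1}(\zeta)}\,ds(\zeta)\;\leq\;C\,\psi(|z-w|).
$$
Consequently, on such balls any statement ``$F\in\mathscr{L}_{\psi,{\rm\,int}}(\Omega_1)$'' implies the Euclidean Lipschitz estimate $|F(z)-F(w)|\leq C'\psi(|z-w|)$, so the local half of all the arguments from the proof of Theorem \ref{thm-3} transports to the present setting without change.

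I would then establish the equivalences within the $u$-block $(\mathscr{D}_1)$--$(\mathscr{D}_4)$ as follows. The implication $(\mathscr{D}_2)\Rightarrow(\mathscr{D}_1)$ follows by integrating $|\nabla u|$ along an arbitrary rectifiable curve $\gamma\subset\Omega_1$ joining $z_1$ and $z_2$, then taking the infimum over $\gamma$. The step $(\mathscr{D}_1)\Rightarrow(\mathscr{D}_3)$ is immediate from $||u(z_1)|-|u(z_2)||\leq|u(z_1)-u(z_2)|$. For $(\mathscr{D}_3)\Rightarrow(\mathscr{D}_2)$, the local compatibility above turns the hypothesis into an oscillation bound $\operatorname{osc}(|u|,\mathbb{B}^n(z,d_{\Omega_1}(z)/2))\leq C\psi(d_{\Omega_1}(z))$, and then the Pavlovi\'c--Dyakonov technique (using subharmonicity of $|u|^p$ for a suitable $p$ together with mean-value and gradient estimates for the pluriharmonic $u$) produces the pointwise gradient bound of $(\mathscr{D}_2)$. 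The equivalence $(\mathscr{D}_4)\Leftrightarrow(\mathscr{D}_2)$ is handled by the mean-value inequality for the subharmonic function $|\nabla u|^p$ (valid in the stated range of $p$) combined with standard gradient estimates on balls; the $v$-block $(\mathscr{D}_5)$--$(\mathscr{D}_8)$ is dealt with in exactly the same way.

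The two blocks are linked by the Cauchy--Riemann equations: for the holomorphic $f=u+iv$, one has $u_{x_j}=v_{y_j}$ and $u_{y_j}=-v_{x_j}$ for each $j$, hence the pointwise identity $|\nabla u(z)|=|\nabla v(z)|$, so $(\mathscr{D}_2)\Leftrightarrow(\mathscr{D}_6)$. For the $f$-block, $(\mathscr{D}_9)$ is equivalent to the conjunction of $(\mathscr{D}_1)$ and $(\mathscr{D}_5)$ via $|f(z_1)-f(z_2)|\leq|u(z_1)-u(z_2)|+|v(z_1)-v(z_2)|$ and the reverse Lipschitz bounds for real and imaginary parts; and $(\mathscr{D}_{10})\Leftrightarrow(\mathscr{D}_9)$ is precisely the content of Theorem H (legitimate here because $\Omega_1$ is bounded, as required by the remark following that theorem). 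Chaining these three blocks closes the circle of equivalences.

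The main obstacle is the implication $(\mathscr{D}_3)\Rightarrow(\mathscr{D}_2)$: extracting a pointwise gradient bound for $u$ from a modulus oscillation bound for $|u|$. This requires a careful adaptation of Pavlovi\'c's argument to the pluriharmonic, higher-dimensional setting with an internal-distance majorant; the essential step is to exploit subharmonicity of $|u|^p$ for a suitable exponent together with the fast-majorant condition, so as to upgrade the local oscillation of $|u|$ on $\mathbb{B}^n(z,d_{\Omega_1}(z)/2)$ to a pointwise bound on $|\nabla u(z)|$ of the expected order $\psi(d_{\Omega_1}(z))/d_{\Omega_1}(z)$.
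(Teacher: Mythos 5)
Your architecture is essentially the paper's: reduce the internal-distance hypotheses to Euclidean/ball-local estimates via segment integrals, then re-run the machinery of Theorem \ref{thm-3}, chain the $u$-, $v$- and $f$-blocks through $|\nabla u|=|\nabla v|$, and invoke Theorem H for $(\mathscr{D}_9)\Leftrightarrow(\mathscr{D}_{10})$. Your local compatibility estimate $d_{\psi,\Omega_1}(z,w)\leq\int_{[z,w]}\psi(d_{\Omega_1}(\zeta))/d_{\Omega_1}(\zeta)\,ds\leq\psi(|z-w|)$ on $\mathbb{B}^n(z,d_{\Omega_1}(z)/2)$ is correct (only the lower pinch $d_{\Omega_1}(\zeta)\geq d_{\Omega_1}(z)/2\geq|z-w|$ is needed) and is in fact marginally cleaner than the paper's route, which compares $d_{\Omega_1}(\zeta)$ with the distance to the boundary of the auxiliary ball and then uses the fast-majorant integral $\int_0^r\psi(t)/t\,dt\leq C\psi(r)$.

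The one place you have not actually closed the argument is $(\mathscr{D}_3)\Rightarrow(\mathscr{D}_2)$, which you flag as the main obstacle and propose to attack via subharmonicity of $|u|^p$. That is not the mechanism that works here (and it is the tool for the separate step $(\mathscr{D}_4)\Rightarrow(\mathscr{D}_2)$, where one uses subharmonicity of $|\nabla u|^p$ from Stein--Weiss). The paper converts the oscillation bound $M_z-|u(z)|\leq C\psi(d_{\Omega_1}(z))$, where $M_z=\sup\{|u(\xi)|:|\xi-z|<d_{\Omega_1}(z)\}$, into the pointwise gradient bound by applying Kalaj's Schwarz--Pick lemma (Theorem M) to the rescaled pluriharmonic function $\eta\mapsto u(z+d_{\Omega_1}(z)\eta)/M_z$, which yields
\[
d_{\Omega_1}(z)\,|\nabla u(z)|\leq\frac{8}{\pi}\bigl(M_z-|u(z)|\bigr);
\]
this inequality is already derived in the proof of Theorem \ref{thm-3} (see (\ref{eq-chh10})), so the obstacle you worry about is resolved by citing that step rather than by a new adaptation of Pavlovi\'c's argument. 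With that substitution your proof matches the paper's.
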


For real-valued harmonic functions in $\mathbf{B}^{n}$, where
$n\geq2$,  Pavlovi\'c proved the following result in \cite{Pa-2007}.

\begin{ThmI}{\rm (See the ``Abstract" of \cite{Pa-2007})}\label{P-07}
Let $u$ be a real-valued harmonic function  in $\mathbf{B}^{n}$ and
continuous on $\overline{\mathbf{B}^{n}}$, where $n\geq2$. Then,
 for $\alpha\in(0,1)$, the following conditions are equivalent:

\begin{enumerate}
\item[{\rm $\bullet$}] $|u(x)-u(y)|\leq\,C|x-y|^{\alpha}, ~~x,y\in\mathbf{B}^{n};$
\item[{\rm $\bullet$}] $\big||u(y)|-|u(\zeta)|\big|\leq\,C|y-\zeta|^{\alpha}, ~~y,\zeta\in\mathbf{S}^{n-1};$
\item[{\rm $\bullet$}] $\big||u(\zeta)|-|u(r\zeta)|\big|\leq\,C|1-r|^{\alpha}, ~~\zeta\in\mathbf{S}^{n-1},~r\in(0,1).$
\end{enumerate}
\end{ThmI}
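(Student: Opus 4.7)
The plan is to establish the three equivalences through the cycle $(1)\Rightarrow(2)\Rightarrow(3)\Rightarrow(1)$, where the first two implications are essentially immediate and the main work is concentrated in $(3)\Rightarrow(1)$, which is where the harmonicity of $u$ enters in an essential way.

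The easy direction uses the continuity of $u$ on $\overline{\mathbf{B}^{n}}$ to extend the Hölder condition in (1) up to the closed ball; then the reverse triangle inequality $\bigl||a|-|b|\bigr|\leq|a-b|$ delivers (2) and (3) by specialization.

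For the key implication $(3)\Rightarrow(1)$, I would first upgrade (3) to the non-absolute radial estimate $|u(\zeta)-u(r\zeta)|\leq C(1-r)^{\alpha}$. This splits into two cases: if $u(\zeta)$ and $u(r\zeta)$ have the same sign (including a vanishing one), then $\bigl||u(\zeta)|-|u(r\zeta)|\bigr|=|u(\zeta)-u(r\zeta)|$ and (3) applies directly; if the signs differ, the intermediate value theorem on the radial segment produces some $r'\in(r,1)$ with $u(r'\zeta)=0$, and two applications of (3) force both $|u(\zeta)|$ and $|u(r\zeta)|$ to be $O((1-r)^{\alpha})$, whence $|u(\zeta)-u(r\zeta)|\leq|u(\zeta)|+|u(r\zeta)|$ gives the same estimate. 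The second step is to combine this sharp radial bound with the interior derivative estimate for harmonic functions to obtain
\[
|\nabla u(x)|\leq C(1-|x|)^{\alpha-1},\quad x\in\mathbf{B}^{n},
\]
after which integration along straight segments together with the standard Hardy--Littlewood lemma yields the Hölder condition (1).

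The main obstacle will be sharpening the naive derivative bound $|\nabla u(x)|\leq C(1-|x|)^{-1}\sup_{B(x,(1-|x|)/2)}|u|$, which alone only yields the exponent $-1$. To gain the missing factor $(1-|x|)^{\alpha}$, I would apply the interior estimate not to $u$ itself but to the perturbation $u(y)-u(x/|x|)$, whose supremum on $B(x,(1-|x|)/2)$ is controlled by the radial estimate $|u(y)-u(y/|y|)|\leq C(1-|y|)^{\alpha}$ just established, provided this estimate propagates from a single ray to the whole ball. This propagation is the delicate point; I would handle it via a Harnack-type argument combined with the harmonic majorization $|u|\leq U$, where $U$ is the Poisson integral of the boundary function $|u|$ on $\mathbf{S}^{n-1}$, noting that $U$ is itself $\alpha$-Hölder on $\overline{\mathbf{B}^{n}}$ by the classical Hardy--Littlewood theorem, which feeds back to justify the uniform control of $|u|$ near the boundary needed for the gradient bound.
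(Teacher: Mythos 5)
Your outline has two genuine gaps, one logical and one analytic.

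First, the logical structure does not close. You announce the cycle $(1)\Rightarrow(2)\Rightarrow(3)\Rightarrow(1)$, but the ``easy direction'' you actually carry out is $(1)\Rightarrow(2)$ and $(1)\Rightarrow(3)$ (continuity up to $\overline{\mathbf{B}^{n}}$ plus the reverse triangle inequality). Combined with $(3)\Rightarrow(1)$ this yields $(1)\Leftrightarrow(3)$ and $(1)\Rightarrow(2)$, but nothing is ever deduced \emph{from} condition (2), so its equivalence with the other two is not established. And $(2)\Rightarrow(3)$ is not ``essentially immediate'': (2) is purely tangential boundary information while (3) compares a boundary point with interior radial points, and passing between them uses harmonicity in an essential way. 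A correct route is to show first, via an intermediate-value argument on great circles, that (2) forces $u$ itself (not merely $|u|$) to be $\alpha$-H\"older on $\mathbf{S}^{n-1}$, and then to invoke the classical Privalov/Hardy--Littlewood theorem that the Poisson extension of an $\alpha$-H\"older boundary function, $0<\alpha<1$, is $\alpha$-H\"older on $\overline{\mathbf{B}^{n}}$; none of this appears in your sketch.

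Second, the crux of $(3)\Rightarrow(1)$ is exactly the step you flag as ``delicate'', and your proposed patch is circular. The IVT step giving $|u(\zeta)-u(r\zeta)|\le C(1-r)^{\alpha}$ is fine. But applying the interior estimate to $u(\cdot)-u(x/|x|)$ on $B(x,(1-|x|)/2)$ requires bounding $|u(y)-u(x/|x|)|$ there; the radial estimate controls $|u(y)-u(y/|y|)|$, and the remaining piece $|u(y/|y|)-u(x/|x|)|$ is tangential boundary regularity of $u$, which (3) does not provide. Your fallback --- majorizing $|u|$ by the Poisson integral of its boundary values and using that this majorant is $\alpha$-H\"older --- presupposes that $|u|$ is $\alpha$-H\"older on $\mathbf{S}^{n-1}$, i.e.\ condition (2), which is not available inside this implication. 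The tool that actually makes the modulus-to-function passage work (used by Pavlovi\'c, and by the present paper in the proof of Theorem \ref{Har-1} via Theorems N and O) is the sharp harmonic Schwarz--Pick estimate: normalizing $u$ on $\mathbf{B}^{n}(z,d(z))$ by $M_{z}=\sup_{\mathbf{B}^{n}(z,d(z))}|u|$ gives $d(z)|\nabla u(z)|\le C\,(M_{z}-|u(z)|)$, which converts oscillation of the \emph{modulus} into a gradient bound without any tangential information about $u$. Even with that estimate in hand, showing that the purely radial hypothesis (3) controls $M_{z}-|u(z)|$ still requires a further argument (this is the substantive content of Pavlovi\'c's theorem), and it is absent from your proposal.
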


By using different proof methods from that in  \cite{Pa-2007},
 we  extend Theorems D, E, G, H and I to real-valued harmonic functions on
 an $\mathscr{L}_{\psi}$-extension domain of $\mathbb{R}^{n}$ as follows, where $n\geq2$.

\begin{Thm}\label{Har-1}
Let $\psi$ be a fast majorant,  and let $\Omega_{2}$ be an
$\mathscr{L}_{\psi}$-extension domain of $\mathbb{R}^{n}$, where
$n\geq2$. If $u$ is a real-valued harmonic function in $\Omega_{2}$,
then  the following statements are equivalent.
\begin{enumerate}
\item[{\rm ($\mathscr{F}_{1}$)}] $u\in\mathscr{L}_{\psi}(\Omega_{2})$;
\item[{\rm ($\mathscr{F}_{2}$)}]
There is a positive constant $C$ such that
$$|\nabla\,u(x)|\leq\,C\frac{\psi(d_{\Omega_{2}}(x))}{d_{\Omega_{2}}(x)},
\quad x\in\Omega_{2};$$
\item[{\rm ($\mathscr{F}_{3}$)}] $|u|\in\mathscr{L}_{\psi}(\Omega_{2})$;
\item[{\rm ($\mathscr{F}_{4}$)}] $|u|\in\mathscr{L}_{\psi}(\Omega_{2},\partial\Omega_{2})$;
\item[{\rm ($\mathscr{F}_{5}$)}] If $n>2$, then, for a given  $p\geq\frac{n-2}{n-1}$, there is a positive constant $C$ such that
\begin{equation}\label{eq-rfj-1}\tilde{I}_{u,p}(x)\leq\,C\frac{\psi(d_{\Omega_{2}}(x))}{d_{\Omega_{2}}(x)},~x\in\Omega_{2},
\end{equation} where
$\tilde{I}_{u,p}(x):=|\mathbf{B}^{n}(x,d_{\Omega_2}(x)/2)|^{-1/p}\|\nabla\,u\|_{p,\mathbf{B}^{n}(x,d_{\Omega_2}(x)/2)}$.
Moreover, if $n=2$, then, for a given $p>0$, there is a positive
constant $C$ such that {\rm(\ref{eq-rfj-1})} holds.
\end{enumerate}
\end{Thm}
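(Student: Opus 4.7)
The plan is to establish the cycle $(\mathscr{F}_1) \Rightarrow (\mathscr{F}_3) \Rightarrow (\mathscr{F}_4) \Rightarrow (\mathscr{F}_2) \Rightarrow (\mathscr{F}_1)$ together with the equivalence $(\mathscr{F}_2) \Leftrightarrow (\mathscr{F}_5)$. The chain $(\mathscr{F}_1) \Rightarrow (\mathscr{F}_3) \Rightarrow (\mathscr{F}_4)$ is immediate from $\big||u(x)|-|u(y)|\big| \leq |u(x)-u(y)|$ and restriction, noting that $|u|$ extends continuously to $\overline{\Omega_{2}}$ when $|u| \in \mathscr{L}_{\psi}(\Omega_{2})$. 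For $(\mathscr{F}_2) \Rightarrow (\mathscr{F}_1)$, I would invoke Lappalainen's characterization (\ref{eq1.0}) of $\mathscr{L}_{\psi}$-extension domains: any $x,y \in \Omega_{2}$ can be joined by a rectifiable curve $\gamma \subset \Omega_{2}$ with $\int_\gamma \psi(d_{\Omega_{2}}(\zeta))/d_{\Omega_{2}}(\zeta)\,ds(\zeta) \leq C\psi(|x-y|)$, so integrating $|\nabla u|$ along $\gamma$ under $(\mathscr{F}_2)$ yields $|u(x)-u(y)| \leq C'\psi(|x-y|)$.

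The heart of the proof is $(\mathscr{F}_4) \Rightarrow (\mathscr{F}_2)$, a real-harmonic version of Dyakonov's sign-splitting trick. Fix $x \in \Omega_{2}$, put $d := d_{\Omega_{2}}(x)$, pick $\zeta \in \partial\Omega_{2}$ with $|x-\zeta| = d$, and set $B := \mathbf{B}^{n}(x, d/2)$. Since $\psi(3d/2) \leq (3/2)\psi(d)$ (because $\psi(t)/t$ is non-increasing), two applications of $(\mathscr{F}_4)$ through $\zeta$ yield
\[
\big||u(y)| - |u(x)|\big| \leq \big||u(y)| - |u(\zeta)|\big| + \big||u(\zeta)| - |u(x)|\big| \leq C_{1} \psi(d), \quad y \in \overline{B}.
\]
Now split on the size of $|u(x)|$. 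If $|u(x)| > 2C_{1}\psi(d)$, then $|u(y)| > C_{1}\psi(d) > 0$ on $\overline{B}$, so $u$ has constant sign on $B$ and $|u| = \pm u$ is harmonic there; the standard harmonic gradient estimate applied to $|u|$ (after subtracting the constant $|u(x)|$) gives $|\nabla u(x)| = |\nabla|u|(x)| \leq (C_{n}/d)\sup_{\overline{B}}\big||u| - |u(x)|\big| \leq C_{2}\psi(d)/d$. If $|u(x)| \leq 2C_{1}\psi(d)$, then $|u| \leq 3C_{1}\psi(d)$ on $\overline{B}$, and applying the same gradient estimate to $u$ itself yields the same bound. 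This step is the main obstacle, since $u$ is real-valued and may change sign, precluding the log-subharmonicity argument that Dyakonov uses for holomorphic $f$ in Theorem E; the workaround is to exploit the harmonicity of $|u|$ on any sign-constant region.

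For $(\mathscr{F}_2) \Leftrightarrow (\mathscr{F}_5)$, I would invoke the classical Stein--Weiss subharmonicity result: for real-valued harmonic $u$ in $\mathbb{R}^{n}$ with $n>2$, $|\nabla u|^{p}$ is subharmonic whenever $p \geq (n-2)/(n-1)$; for $n=2$ the function $\partial_{x_{1}}u - i\partial_{x_{2}}u$ is holomorphic, so $|\nabla u|^{p}$ is subharmonic for every $p>0$. The sub-mean value inequality on $\mathbf{B}^{n}(x, d/2)$ then gives $|\nabla u(x)| \leq \tilde{I}_{u,p}(x)$, proving $(\mathscr{F}_5) \Rightarrow (\mathscr{F}_2)$. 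Conversely, for $y \in \mathbf{B}^{n}(x, d/2)$ one has $d_{\Omega_{2}}(y) \in [d/2, 3d/2]$; since $\psi(t)/t$ is non-increasing and $\psi$ non-decreasing, $\psi(d_{\Omega_{2}}(y))/d_{\Omega_{2}}(y) \leq 2\psi(d)/d$, whence $\tilde{I}_{u,p}(x) \leq C\psi(d)/d$ under $(\mathscr{F}_2)$.
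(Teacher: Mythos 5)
Your proof is correct, and its overall architecture --- the gradient condition $(\mathscr{F}_2)$ as the hub, Lappalainen's characterization (\ref{eq1.0}) for $(\mathscr{F}_2)\Rightarrow(\mathscr{F}_1)$, and the Stein--Weiss subharmonicity of $|\nabla u|^{p}$ for the equivalence with $(\mathscr{F}_5)$ --- coincides with the paper's. The genuine difference is in the key step converting a bound on the oscillation of $|u|$ into a pointwise gradient bound. The paper sets $M_{x}=\sup_{\mathbf{B}^{n}(x,d_{\Omega_2}(x))}|u|$, rescales $u$ to a harmonic function of $\mathbf{B}^{n}$ into $(-1,1)$, and applies the sharp harmonic Schwarz--Pick lemmas (Theorems N and O) to get $d_{\Omega_2}(x)\,|\nabla u(x)|\leq C\,(M_{x}-|u(x)|)\leq C\psi(d_{\Omega_2}(x))$. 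You instead split on the size of $|u(x)|$: if $|u(x)|>2C_{1}\psi(d)$ then $u$ is nonvanishing, hence of constant sign, on the half-ball, so $|u|$ is harmonic there and the classical interior estimate $|\nabla v(x)|\leq (n/r)\sup|v|$ applies to $|u|-|u(x)|$; otherwise it applies to $u$ itself. Both arguments are valid. Yours is more elementary and self-contained, needing only the standard derivative estimate, and it handles $(\mathscr{F}_4)\Rightarrow(\mathscr{F}_2)$ head-on through the nearest boundary point rather than by the paper's ``similar to $(\mathscr{F}_1)\Leftrightarrow(\mathscr{F}_3)$'' reduction; the paper's route, by contrast, imports explicit sharp constants and is the one that transfers verbatim to the pluriharmonic setting of Theorem \ref{thm-3} via Theorem M. Two points you should make explicit: one may assume $\psi>0$ on $(0,\infty)$ (otherwise $\psi\equiv0$ and everything is trivial), and $(\mathscr{F}_4)$ presupposes the continuous extension of $|u|$ to $\partial\Omega_{2}$, which your use of the boundary point $\zeta$ requires.
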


If  the Euclidean distance is replaced by
 the internal distance in Theorem \ref{Har-1}, then we obtain the following result.
 Here we omit the proof because it suffices to use (\ref{Ha-ch-1}), (\ref{Ha-ch-2}) and Theorems  N, O instead of (\ref{Poisson-1t}), (\ref{eq-4ct}) and Theorem M, respectively, and
 use arguments similar to those in the proof of Theorem \ref{thm-4}.

\begin{Thm}\label{Har-2}
Let $\psi$ be a fast majorant,  and let $\Omega_{3}$ be a   bounded
domain of $\mathbb{R}^{n}$, where $n\geq2$. If $u$ is a real-valued
harmonic function in $\Omega_{3}$, then  the following statements
are equivalent.
\begin{enumerate}
\item[{\rm ($\mathscr{G}_{1}$)}] $u\in\mathscr{L}_{\psi,{\rm\,int}}(\Omega_{3})$;
\item[{\rm ($\mathscr{G}_{2}$)}]
There is a positive constant $C$ such that
$$|\nabla\,u(x)|\leq\,C\frac{\psi(d_{\Omega_{3}}(x))}{d_{\Omega_{3}}(x)},
\quad x\in\Omega_{3};$$
\item[{\rm ($\mathscr{G}_{3}$)}]
$|u|\in\mathscr{L}_{\psi,{\rm\,int}}(\Omega_{3})$;
\item[{\rm ($\mathscr{G}_{4}$)}] If $n>2$, then, for a given  $p\geq\frac{n-2}{n-1}$, there is a positive constant $C$ such that
\begin{equation}\label{eq-rfj-2}\tilde{I}_{u,p}(x)\leq\,C\frac{\psi(d_{\Omega_{3}}(x))}{d_{\Omega_{3}}(x)},~x\in\Omega_{3}.
\end{equation}
Moreover, if $n=2$, then, for a given $p>0$, there is a positive
constant $C$ such that {\rm(\ref{eq-rfj-2})} holds.
\end{enumerate}
\end{Thm}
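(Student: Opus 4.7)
My plan is to establish the equivalences via the cycle $(\mathscr{G}_{1})\Rightarrow(\mathscr{G}_{3})\Rightarrow(\mathscr{G}_{2})\Rightarrow(\mathscr{G}_{1})$ together with the separate equivalence $(\mathscr{G}_{2})\Leftrightarrow(\mathscr{G}_{4})$, following the blueprint of the proof of Theorem \ref{thm-4}, but with the tools proper to holomorphic functions replaced by their real-harmonic counterparts, as indicated by the remark preceding the statement (using (\ref{Ha-ch-1}), (\ref{Ha-ch-2}) and Theorems N, O in place of (\ref{Poisson-1t}), (\ref{eq-4ct}) and Theorem M). Boundedness of $\Omega_{3}$ ensures that $d_{\Omega_{3}}$ is bounded and that the internal distance $d_{\psi,\Omega_{3}}$ is finite, so the internal Lipschitz condition is a genuine constraint.

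The implication $(\mathscr{G}_{1})\Rightarrow(\mathscr{G}_{3})$ is immediate from $||u(x)|-|u(y)||\leq|u(x)-u(y)|$. The implication $(\mathscr{G}_{2})\Rightarrow(\mathscr{G}_{1})$ is also routine: for $z_{1},z_{2}\in\Omega_{3}$ and any rectifiable $\gamma\subset\Omega_{3}$ joining them, integrate the pointwise gradient bound and pass to the infimum over $\gamma$,
\[
|u(z_{1})-u(z_{2})|\leq \int_{\gamma}|\nabla u(\xi)|\,ds(\xi)\leq C\int_{\gamma}\frac{\psi(d_{\Omega_{3}}(\xi))}{d_{\Omega_{3}}(\xi)}\,ds(\xi).
\]

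The core step is $(\mathscr{G}_{3})\Rightarrow(\mathscr{G}_{2})$. Fix $x_{0}\in\Omega_{3}$, put $r=d_{\Omega_{3}}(x_{0})/2$ and let $B=\mathbf{B}^{n}(x_{0},r)$. Every segment between points of $B$ stays in $B$, where $d_{\Omega_{3}}\geq r$, so the monotonicity of $t\mapsto\psi(t)/t$ gives
\[
d_{\psi,\Omega_{3}}(x,y)\leq \frac{\psi(r)}{r}|x-y|, \qquad x,y\in B.
\]
Now I split on the sign of $u$ on $B$, mimicking the Pavlovi\'c--Dyakonov trick. If $u$ is of one sign on $B$, then $|u|$ is harmonic on $B$, and the classical gradient estimate for harmonic functions (Theorem N) applied to $|u|-|u(x_{0})|$, together with the internal-Lipschitz bound on $|u|$, yields $|\nabla u(x_{0})|=|\nabla |u|(x_{0})|\leq C\psi(r)/r$. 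If instead $u(y)=0$ for some $y\in B$, then $(\mathscr{G}_{3})$ and the internal-distance bound above give $|u(x)|=\big||u(x)|-|u(y)|\big|\leq C\psi(r)$ throughout $B$, and the same harmonic gradient estimate applied directly to $u$ yields $|\nabla u(x_{0})|\leq C\psi(r)/r$. In either case $\psi(r)/r\leq 2\psi(d_{\Omega_{3}}(x_{0}))/d_{\Omega_{3}}(x_{0})$, by the monotonicity of $\psi$ and $r=d_{\Omega_{3}}(x_{0})/2$.

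The equivalence $(\mathscr{G}_{2})\Leftrightarrow(\mathscr{G}_{4})$ is handled by Theorem O, which asserts the subharmonicity of $|\nabla u|^{p}$ for $p\geq (n-2)/(n-1)$ when $n>2$, and for any $p>0$ when $n=2$. The direction $(\mathscr{G}_{2})\Rightarrow(\mathscr{G}_{4})$ is immediate from the pointwise bound, while $(\mathscr{G}_{4})\Rightarrow(\mathscr{G}_{2})$ follows from the sub-mean-value inequality for $|\nabla u|^{p}$ over the ball $\mathbf{B}^{n}(x_{0},d_{\Omega_{3}}(x_{0})/2)$. The main obstacle, as always in this family of results, is the case split in $(\mathscr{G}_{3})\Rightarrow(\mathscr{G}_{2})$: the hypothesis only controls $|u|$, not $u$, so one must locate a zero of $u$ in $B$ to convert the internal-Lipschitz information on $|u|$ into a genuine $L^{\infty}$ estimate on $u$ over a ball of radius comparable to $d_{\Omega_{3}}(x_{0})$, which is where the comparison between internal and Euclidean distance on interior balls is used crucially.
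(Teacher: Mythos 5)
Your proposal is correct, but the key implication is handled by a genuinely different route than the paper's. The paper proves Theorem \ref{Har-2} by transplanting the proof of Theorem \ref{thm-4}: the passage from $|u|\in\mathscr{L}_{\psi,{\rm\,int}}(\Omega_{3})$ to the gradient bound goes through the quantity $M_{x}-|u(x)|$ and the normalized function $u(x+d_{\Omega_{3}}(x)\eta)/M_{x}$, to which the sharp harmonic Schwarz--Pick lemmas (Theorem N for $n=2$, Theorem O for $n\geq3$) are applied, yielding $d_{\Omega_{3}}(x)|\nabla u(x)|\leq C(M_{x}-|u(x)|)\leq C\psi(d_{\Omega_{3}}(x))$. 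You instead split according to whether $u$ vanishes on the half-ball $B=\mathbf{B}^{n}(x_{0},d_{\Omega_{3}}(x_{0})/2)$: if not, $|u|$ is itself harmonic and the classical interior gradient estimate applies to $|u|-|u(x_{0})|$; if so, the internal-Lipschitz bound on $|u|$ becomes a sup-bound $|u|\leq C\psi(r)$ on $B$ and the same estimate applies to $u$ directly. This is valid and buys two things: it needs only the elementary Cauchy-type gradient estimate rather than the sharp lemmas N and O, and your comparison $d_{\psi,\Omega_{3}}(x,y)\leq\frac{\psi(r)}{r}|x-y|$ on the half-ball (where $d_{\Omega_{3}}\geq r$) uses only the monotonicity of $\psi(t)/t$, bypassing the fast-majorant integral condition that the paper invokes at the corresponding point. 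What the paper's route buys is uniformity of treatment with Theorems \ref{thm-3} and \ref{thm-4} and explicit constants. One correction: Theorem O does \emph{not} assert the subharmonicity of $|\nabla u|^{p}$; it is the Schwarz--Pick estimate for harmonic $u:\mathbf{B}^{n}\to(-1,1)$, $n\geq3$, i.e.\ precisely the tool your case-split replaces. The subharmonicity of $|\nabla u|^{p}$ for $p\geq\frac{n-2}{n-1}$ ($n>2$) and $p>0$ ($n=2$), which you need for $(\mathscr{G}_{4})\Rightarrow(\mathscr{G}_{2})$, is the Stein--Weiss theorem \cite[Theorem A]{SW}; this is a misattribution only, not a gap, since the fact invoked is exactly the one the paper uses. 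Also note that in citing ``Theorem N'' for the classical gradient estimate you are again reaching for the wrong (and dimension-restricted) tool; the standard estimate $|\nabla h(x_{0})|\leq\frac{Cn}{r}\sup_{B}|h|$ is all that is required there.
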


The proofs of Theorems \ref{thm-1}, \ref{thm-2} and \ref{thm-2c}
will be presented in section \ref{sec3}, and the proof of Theorems
\ref{thm-3}, \ref{thm-4} and \ref{Har-1} will be given in section
\ref{sec4}.

\section{The Riesz type inequalities and the Hardy-Littlewood type theorems}\label{sec3}

The following results will play an important role in the proof of
Theorem \ref{thm-1}.

\begin{LemJ}{\rm (cf. \cite[Lemma 5]{CPR})}\label{Lemx}
Suppose that $x,~y\in[0,\infty)$ and $\tau\in(0,\infty)$. Then
$$(x+y)^{\tau}\leq2^{\max\{\tau-1,0\}}(x^{\tau}+y^{\tau}).$$
\end{LemJ}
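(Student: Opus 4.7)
The plan is to split the argument into two cases based on the value of $\tau$, matching the two values that $\max\{\tau-1,0\}$ can take. The trivial case $x=y=0$ gives both sides equal to $0$, so I may assume $x+y>0$.

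Case 1: $\tau\geq 1$. Here I would exploit the convexity of the map $t\mapsto t^{\tau}$ on $[0,\infty)$. Applying Jensen's inequality (or equivalently the midpoint convexity inequality) to the two points $x$ and $y$ yields
\[
\left(\frac{x+y}{2}\right)^{\tau}\leq \frac{x^{\tau}+y^{\tau}}{2},
\]
which rearranges to $(x+y)^{\tau}\leq 2^{\tau-1}(x^{\tau}+y^{\tau})$, matching $2^{\max\{\tau-1,0\}}$ in this range.

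Case 2: $0<\tau<1$. Here the goal is to prove the stronger subadditivity statement $(x+y)^{\tau}\leq x^{\tau}+y^{\tau}$, since $\max\{\tau-1,0\}=0$. Assuming without loss of generality that $x+y>0$, I would normalize by setting $s=x/(x+y)\in[0,1]$ and $t=y/(x+y)\in[0,1]$, so $s+t=1$. Since $0<\tau<1$ and $s,t\in[0,1]$, we have $s^{\tau}\geq s$ and $t^{\tau}\geq t$, hence $s^{\tau}+t^{\tau}\geq s+t=1$. Multiplying through by $(x+y)^{\tau}$ gives the desired inequality.

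The argument is entirely elementary; there is no real obstacle beyond correctly identifying which exponent regime corresponds to which bound. The only subtlety worth noting is the boundary case $\tau=1$, where both cases coincide and the inequality becomes an equality, consistent with $2^{\max\{0,0\}}=1$.
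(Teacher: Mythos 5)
Your proof is correct. The paper does not actually prove this lemma — it only cites it (as Lemma 5 of the reference [CPR]) — so there is no in-paper argument to compare against; your two-case treatment (convexity of $t\mapsto t^{\tau}$ for $\tau\geq 1$, and subadditivity via normalization $s=x/(x+y)$, $t=y/(x+y)$ for $0<\tau<1$) is the standard and complete way to establish it.
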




\begin{ThmK}{\rm (\cite[Theorem 7]{H-L})}\label{HL}
Suppose that $f=u+iv$ is a holomorphic function in $\mathbb{D}$ with $v(0)=0$. 
If  $u\in\mathscr{PH}^{p}(\mathbb{D})$ for some $0<p\leq1$, then
$v$ satisfies

\be\label{eq-Har-Lit}M_{p}(r,v)\leq\,C\|u\|_{p}+C\|u\|_{p}\left(\log\frac{1}{1-r}\right)^{\frac{1}{p}},\ee
where $C$ is a positive constant depending only on $p$.
\end{ThmK}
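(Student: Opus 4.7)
The plan is to represent $v$ as the conjugate Poisson integral of the dilated boundary data $u_\rho(\cdot) = u(\rho\,\cdot)$ for $\rho \to 1^-$, and to estimate its $L^p$-means using the growth of the $L^1$-norm of the conjugate Poisson kernel. Since $v(0) = 0$, for all $0 \leq s < \rho < 1$ we have
\[
v(se^{i\theta}) = \frac{1}{2\pi}\int_0^{2\pi} Q_{s/\rho}(\theta - t)\, u(\rho e^{it})\, dt, \qquad Q_r(\phi) := \frac{2r\sin\phi}{1 - 2r\cos\phi + r^2},
\]
and a direct computation yields $\|Q_r\|_{L^1(\mathbb{T})} \leq C(1 + \log\frac{1}{1-r})$. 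For $p = 1$, Young's convolution inequality immediately gives
\[
M_1(s, v) \leq \|Q_{s/\rho}\|_{L^1}\,\|u_\rho\|_{L^1} \leq C\left(1 + \log\frac{1}{1 - s/\rho}\right)\|u\|_1,
\]
and passing $\rho \to 1^-$ via Fatou's lemma yields the stated inequality, with the term $C\|u\|_1$ absorbing the short-range behaviour.

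For the delicate range $0 < p < 1$, the naive approach breaks because $\|u_\rho\|_{L^1}$ is not controlled by $\|u\|_p$. I would instead combine Kolmogorov's weak-type $(1,1)$ bound for the Hilbert transform on $\mathbb{T}$ with a layer-cake decomposition
\[
M_p^p(s, v) = p\int_0^\infty \lambda^{p-1}\,|\{\theta \in [0,2\pi) : |v(se^{i\theta})| > \lambda\}|\,d\lambda,
\]
splitting the integral at a threshold $\Lambda$. The short part is bounded by $C\Lambda^p$, while the tail is bounded by Kolmogorov's estimate together with the $L^1$-size of $Q_{s/\rho}$. To convert the $\|u_\rho\|_{L^1}$ dependence back into a bound in terms of $\|u\|_p$, I would appeal to the atomic/dyadic decomposition of the boundary distribution of $u$ in $\mathscr{PH}^p$ (or, equivalently, exploit the subharmonicity of $|f|^p$, which is valid for any holomorphic $f$ and any $p > 0$). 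Choosing $\Lambda$ of the order of $\|u\|_p(\log\frac{1}{1-s})^{1/p}$ then balances the two contributions and produces precisely the exponent $1/p$ on the logarithm.

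The main obstacle is the case $p < 1$: because the boundary data of $u$ need not be a function or even a Borel measure, every estimate must be carried out on the dilates $u_\rho$ with constants uniform in $\rho$, and the final step passes $\rho \to 1^-$ via Fatou's lemma. The sharp exponent $1/p$ on $\log\frac{1}{1-r}$ is delicate and will not drop out of a direct interpolation between M.\ Riesz's inequality on $L^q$ (whose constants blow up as $q \to 1^+$) and a trivial $L^\infty$ bound; only the layer-cake optimization, with the correct scaling of $\Lambda$ in terms of $\|u\|_p$ and $\log\frac{1}{1-s}$, delivers the sharp exponent.
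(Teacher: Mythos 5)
This is a quoted classical result (Hardy--Littlewood, Theorem~7 of \cite{H-L}); the paper gives no proof of it, so your proposal has to stand on its own. Your $p=1$ argument is fine: the conjugate Poisson representation through the dilates, the bound $\|Q_r\|_{L^1}\leq C(1+\log\frac{1}{1-r})$, and Minkowski's integral inequality give exactly the stated estimate (you do not even need Fatou, since the left-hand side is independent of $\rho$). Incidentally, $p=1$ is the only case the paper actually uses, in the proof of $(\mathscr{B}_2)$.

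The case $0<p<1$, however, has a genuine gap, and it sits precisely where you flag the difficulty. Kolmogorov's weak-type $(1,1)$ bound gives $|\{|v(se^{i\theta})|>\lambda\}|\leq C\|u_\rho\|_{L^1}/\lambda$, and the layer-cake optimization then yields $M_p(s,v)\leq C_p\|u_\rho\|_{L^1}$ --- everything is funneled through the $L^1$ norm of the dilates. But for $u\in h^p$ with $p<1$ the best general bound is $M_1(\rho,u)\leq C(1-\rho)^{1-1/p}\|u\|_p$, which grows polynomially, so no choice of the threshold $\Lambda$ can produce a logarithmic bound: balancing $\Lambda^p$ against $\|u_\rho\|_{L^1}\Lambda^{p-1}$ forces $\Lambda\sim\|u_\rho\|_{L^1}$, not $\Lambda\sim\|u\|_p(\log\frac{1}{1-s})^{1/p}$. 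The two repairs you suggest do not close this. An atomic decomposition of the ``boundary distribution'' of $u$ presupposes $u\in\mathrm{Re}\,H^p$, i.e.\ $f\in H^p$ --- but that is exactly what fails in general (if it held, $v$ would be bounded in $h^p$ with no logarithm at all, and the theorem would be vacuous); a general $u\in h^p$, $p<1$, need not have boundary values in any usable sense. Likewise, subharmonicity of $|f|^p$ only helps once you control $M_p(r,f)$, which is the conclusion, not the hypothesis. The classical route is different: from subharmonicity of $|u|^p$ (valid in the plane for every $p>0$, by the Stein--Weiss theorem the paper cites as \cite[Theorem A]{SW}) one derives the mean gradient estimate $M_p(t,f')\leq C\|u\|_p/(1-t)$; then one writes $v(re^{i\theta})-v(0)=\int_0^r\partial_t v\,dt$, splits $[0,r]$ into $O\bigl(\log\frac{1}{1-r}\bigr)$ dyadic annuli, applies $(\sum_k a_k)^p\leq\sum_k a_k^p$, and checks that each annulus contributes $O(\|u\|_p^p)$ to $M_p^p(r,v)$. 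The count of annuli is what produces the factor $\bigl(\log\frac{1}{1-r}\bigr)^{1/p}$; you should replace the weak-type/layer-cake scheme for $p<1$ by this (or an equivalent) argument.
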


\begin{ThmL}{\rm (\cite[Theorem 5]{M})}\label{MK}
Suppose that $\Omega\subset\mathbb{C}^{n}$ is a bounded symmetric
domain with origin and Bergman-Silov boundary $b$. Let $f=u+iv$ be a
holomorphic function in $\Omega$. If $u\in\mathscr{PH}^{1}(\Omega)$,
then $v\in\mathscr{PH}^{q}(\Omega)$ for all $q\in(0,1)$.
\end{ThmL}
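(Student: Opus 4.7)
The plan is to reduce the statement to the one-dimensional disk case by slicing $f$ along complex lines through the origin, and then to apply a Kolmogorov-type conjugate-function inequality on $\mathbb{D}$. For each $\zeta \in b$, the circularity of $\Omega$ ensures $\lambda\zeta \in \Omega$ for every $\lambda \in \mathbb{D}$, so $f_\zeta(\lambda) := f(\lambda\zeta) = u_\zeta(\lambda) + i v_\zeta(\lambda)$ is holomorphic on $\mathbb{D}$ with $u_\zeta$, $v_\zeta$ real harmonic, and $v_\zeta - v(0)$ is the normalized harmonic conjugate of $u_\zeta$.

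The key technical tool is the rotation invariance of $\sigma$: the rotations $\zeta \mapsto e^{i\theta}\zeta$ belong to $\Gamma_0$ and hence preserve $\sigma$. Averaging in $\theta$ and using Fubini gives, for every $q > 0$ and $\rho \in [0,1)$,
\[
\int_b M_q^q(\rho, u_\zeta)\, d\sigma(\zeta) = M_q^q(\rho, u), \qquad \int_b M_q^q(\rho, v_\zeta)\, d\sigma(\zeta) = M_q^q(\rho, v).
\]
Specializing to $q = 1$ and passing to the limit $\rho \to 1^-$ (using monotone convergence together with the fact that $M_1(\rho, u_\zeta)$ is nondecreasing in $\rho$ by subharmonicity of $|u_\zeta|$) yields
\[
\int_b \|u_\zeta\|_{h^1(\mathbb{D})}\, d\sigma(\zeta) = \|u\|_1 < \infty,
\]
so that $u_\zeta \in h^1(\mathbb{D})$ for $\sigma$-a.e. $\zeta \in b$.

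For each such $\zeta$, I would invoke the classical Kolmogorov conjugate-function theorem on the disk, which asserts $\|v_\zeta - v(0)\|_{h^q(\mathbb{D})} \leq C_q \|u_\zeta\|_{h^1(\mathbb{D})}$ for every $q \in (0,1)$. Combining this with the quasi-subadditivity $(a+b)^q \leq a^q + b^q$ valid for $0 < q \leq 1$, substituting into the slice identity for $M_q^q(\rho, v)$, and finally applying Jensen's inequality for $q < 1$ to $\int_b \|u_\zeta\|_{h^1}^q\, d\sigma$, one obtains
\[
M_q^q(\rho, v) \leq C_q^q \|u\|_1^q + |v(0)|^q
\]
uniformly in $\rho \in [0,1)$, whence $v \in \mathscr{PH}^q(\Omega)$.

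I expect the main obstacle to be the Kolmogorov step. Theorem K as quoted above only gives $M_q(\rho, v_\zeta) \leq C\|u_\zeta\|_q\bigl[1 + (\log(1/(1-\rho)))^{1/q}\bigr]$, whose logarithmic factor in $\rho$ is not killed by integration against $\sigma$. One therefore needs the sharper $h^1 \to h^q$ bound of Kolmogorov (equivalently, the weak-type $(1,1)$ inequality for the Hilbert transform on $\mathbb{T}$) in order to eliminate the $\rho$-dependence; this is the only nontrivial analytic input beyond the slicing and Fubini-type manipulations.
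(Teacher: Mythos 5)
Theorem L is not proved in the paper at all: it is quoted from Mitchell \cite[Theorem 5]{M}, so there is no internal proof to compare against. Your argument is, however, a correct self-contained proof, and it uses exactly the slicing machinery the authors deploy elsewhere (the identity $\int_b M_{q,\zeta}^q(\rho,u)\,d\sigma(\zeta)=M_q^q(\rho,u)$ is the same Fubini-plus-rotation-invariance computation that opens their proof of Theorem 2.1; the rotations $z\mapsto e^{i\theta}z$ are automorphisms of the circular domain $\Omega$ fixing $0$, hence lie in $\Gamma_0$ and preserve both $b$ and $\sigma$). The passage from $\sup_\rho$ to $\lim_\rho$ via monotone convergence is justified because $|u_\zeta|$ is subharmonic on $\mathbb{D}$, so each slice mean is nondecreasing in $\rho$, and the star-shapedness and circularity of $\Omega$ guarantee $\lambda\zeta\in\Omega$ for $\lambda\in\mathbb{D}$, $\zeta\in b$, so the slices are well defined. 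Your diagnosis of the one nontrivial analytic input is also right: Theorem K's bound carries a $\bigl(\log\frac{1}{1-\rho}\bigr)^{1/q}$ factor that survives integration over $b$ and is therefore useless here, whereas Kolmogorov's theorem gives $M_q(\rho, v_\zeta-v(0))\le C_q\|u_\zeta\|_{h^1(\mathbb{D})}$ uniformly in $\rho$ for $q\in(0,1)$; combined with the $q$-subadditivity of $t\mapsto t^q$ and Jensen's inequality applied to the probability measure $\sigma$, this yields $\sup_\rho M_q^q(\rho,v)\le C_q^q\|u\|_1^q+|v(0)|^q<\infty$, which is the assertion. The argument is complete as written; the only thing worth making explicit is the citation for Kolmogorov's $h^1\to h^q$ inequality (e.g.\ Theorem 4.2 in Duren's book \cite{Du1}), since that is the step doing all the work.
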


\subsection{The proof of Theorem \ref{thm-1}} We first prove $(\mathscr{B}_{1})$. Let
$f=h+\overline{g}=u+iv\in\mathscr{PH}^{p}(\Omega)$, where
$h=u_{1}+iv_{1}$ and $g=u_{2}+iv_{2}$. Then
$u,~v\in\mathscr{PH}^{p}(\Omega)$.  Set $F=h+g$ and $v^{\ast}={\rm
Im}(F)$, where $``{\rm Im}"$ denotes the imaginary part of a complex
number. It is not difficult to know that ${\rm Re}(F)={\rm
Re}(f)=u\in\mathscr{PH}^{p}(\Omega)$ and by using Fubini's theorem,
we obtain
\[
M_{p}^{p}(r,u)=\int_{b}|u(r\zeta)|^{p}d\sigma(\zeta)=\int_{b}M_{p,\zeta}^{p}(r,u)d\sigma(\zeta),
\]
where \beq\label{eq-chh-1b-new}
M_{p,\zeta}^{p}(r,u):=\frac{1}{2\pi}\int_{0}^{2\pi}|u(re^{i\theta}\zeta)|^{p}d\theta.\eeq
Since $|u|^{p}$ is plurisubharmonic on $\Omega$ (see \cite{Vl}),
$|U(z)|^{p}:=|u(z\zeta)|^{p}$ is subharmonic with respect to
$z\in\mathbb{D}$. Thus, by (\ref{eq-chh-1b-new}),  we see that
$M_{p,\zeta}^{p}(r,u)$ is increasing on $r\in[0,1)$ for  all
$\zeta\in\, b$, which, together with Lebesgue's monotone convergence
Theorem, yields that
\beq\label{eq-chh-2}\|u\|_{p}=\left(\int_{b}\lim_{r\to 1^{-}}
M_{p,\zeta}^{p}(r,u)d\sigma(\zeta)\right)^{\frac{1}{p}}<\infty.\eeq
Without loss of generality, we assume that $v_{1}(0)=v_{2}(0)=0$.
Since $V(z):=v^{\ast}(z\zeta)$ is the harmonic conjugate of $U(z)$
on $z\in\mathbb{D}$, by (\ref{Riez-2}), we see that

\be\label{eq-chh-3}
M_{p,\zeta}(r,v^{\ast})\leq\cot\frac{\pi}{2p^{\ast}}M_{p,\zeta}(r,u)
\ee for all $r\in (0,1)$ and $\zeta\in\,b$. By raising both sides of
(\ref{eq-chh-3}) to the $p$-th power and integrating over $b$, we
have

\beq\label{eq-chh-4}
\|v^{\ast}\|_{p}&=&\sup_{r\in[0,1)}M_{p}(r,v^{\ast}) \leq
\cot\frac{\pi}{2p^{\ast}}\sup_{r\in[0,1)}M_{p}(r,u)\\ \nonumber
&=&\cot\frac{\pi}{2p^{\ast}}\|u\|_{p}<\infty. \eeq

It follows from   Lemma J that

$$|v_{2}|^{p}=|{\rm Im}(g)|^{p}=\frac{|v^{\ast}-v|^{p}}{2^{p}}\leq\frac{|v^{\ast}|^{p}+|v|^{p}}{2},$$
which, together with (\ref{eq-chh-4}), yields that \beqq
\|v_{2}\|_{p}=\sup_{r\in[0,1)}M_{p}(r,v_{2})\leq2^{-\frac{1}{p}}\left(\|v^{\ast}\|_{p}+\|v\|_{p}\right)<\infty.
\eeqq

For $\xi\in\Omega$, let
$$G(\xi)=-ig(\xi)+iu_{2}(0)=v_{2}(\xi)+i(u_{2}(0)-u_{2}(\xi)).$$ By
using similar reasoning as in the proof of (\ref{eq-chh-4}), we have
$$\|{\rm Im}(G)\|_{p}\leq\cot\frac{\pi}{2p^{\ast}}\|v_{2}\|_{p}<\infty,$$
which implies that $g\in\mathscr{PH}^{p}(\Omega)$. Another desired
conclusion $h\in\mathscr{PH}^{p}(\Omega)$ follows from the following
inequality

$$|h|^{p}=|f-\overline{g}|^{p}\leq2^{p-1}(|f|^{p}+|g|^{p}).$$

Now we prove ($\mathscr{B}_{1}{\rm(I)}$). Since
$$|\mathcal{Q}(z)|^{p}:=\left(|h(z\zeta)|^{2}+|g(z\zeta)|^{2}\right)^{\frac{p}{2}}$$ is subharmonic with respect to
$z\in\mathbb{D}$ (see e.g. \cite[Lemma 2.13]{K-2019}),  we see that
$M_{p,\zeta}^{p}\left(r,\left(|h|^{2}+|g|^{2}\right)^{\frac{1}{2}}\right)$
is increasing on $r\in[0,1)$ for   all $\zeta\in\, b$, which,
together with $(\mathscr{B}_{1})$, Lemma J  and Lebesgue's monotone
convergence Theorem, yields that
\be\label{eq-chh-2-1}\sup_{r\in[0,1)}\left(\int_{b}M_{p,\zeta}^{p}(r,\mathcal{Q})d\sigma(\zeta)\right)^{\frac{1}{p}}=
\left(\int_{b}\lim_{r\to
1^{-}}M_{p,\zeta}^{p}(r,\mathcal{Q})d\sigma(\zeta)\right)^{\frac{1}{p}}<\infty.\ee
By Theorem C ($\mathscr{A}_{1}$), we have

\be\label{eq-chh-3t}
M_{p,\zeta}(r,\mathcal{Q})\leq\frac{1}{C_{1}(p)}M_{p,\zeta}(r,f) \ee
for  all $r\in(0,1)$ and $\zeta\in\, b$. Raise both sides of
(\ref{eq-chh-3t}) to the $p$-th power and integrate over $b$, which,
together with
 (\ref{eq-chh-2-1}), gives that

$$\lim_{r\to 1^{-}}\left(\int_{b}\big(|h(r\zeta)|^{2}+
|g(r\zeta)|^{2}\big)^{\frac{p}{2}}d\sigma(\zeta)\right)^{\frac{1}{p}}\leq
\frac{1}{C_{1}(p)} \|f\|_{p}.$$ The conclusion of
($\mathscr{B}_{1}{\rm(II)}$) follows from the similar reasoning as
in the proof of ($\mathscr{B}_{1}{\rm(I)}$).

Next, we prove ($\mathscr{B}_{2}$).  Since
$f=h+\overline{g}=u+iv\in\mathscr{PH}^{1}(\Omega)$, we see that
$u,~v\in\mathscr{PH}^{1}(\Omega)$. By Theorem L, we know that
$v^{\ast}={\rm Im}(F)={\rm Im}(h+g)\in\mathscr{PH}^{q}(\Omega)$ for
all $q\in(0,1)$. Consequently,
\[
|v_{2}|^{q}=|{\rm
Im}(g)|^{q}=\frac{|v^{\ast}-v|^{q}}{2^{q}}\leq\frac{|v^{\ast}|^{q}+|v|^{q}}{2^{q}},
\]
which, together with
$$\sup_{r\in[0,1)}\int_{b}|v(r\zeta)|^{q}d\sigma(\zeta)\leq
\sup_{r\in[0,1)}\left(\left(\int_{b}|v(r\zeta)|d\sigma(\zeta)\right)^{q}\left(\int_{b}d\sigma(\zeta)\right)^{1-q}\right)<\infty,$$
 gives that $v_{2}\in\mathscr{PH}^{q}(\Omega)$ for all $q\in(0,1)$. Let $F_{1}=h-g$ and $u^{\ast}={\rm Re}(F_{1}).$
Then
$${\rm Re}(-iF_{1})={\rm Im}(F_{1})={\rm Im}(f)\in\mathscr{PH}^{1}(\Omega),$$
which, together with Theorem L, implies that

$$-u^{\ast}={\rm Im}(-iF_{1})\in\mathscr{PH}^{q}(\Omega)$$ for all $q\in(0,1)$.
Hence

\[
|u_{2}|^{q}=|{\rm
Re}(g)|^{q}=\frac{|u-u^{\ast}|^{q}}{2^{q}}\leq\frac{|u|^{q}+|u^{\ast}|^{q}}{2^{q}},
\]
which, together with
$$\sup_{r\in[0,1)}\int_{b}|u(r\zeta)|^{q}d\sigma(\zeta)\leq
\sup_{r\in[0,1)}\left(\left(\int_{b}|u(r\zeta)|d\sigma(\zeta)\right)^{q}\left(\int_{b}d\sigma(\zeta)\right)^{1-q}\right)<\infty,$$
yields that $u_{2}\in\mathscr{PH}^{q}(\Omega)$ for all $q\in(0,1)$.
Combining the results $u_{2}, v_{2} \in\mathscr{PH}^{q}(\Omega)$ for
all $q\in(0,1)$  and  Lemma J gives that
$g=u_{2}+iv_{2}\in\mathscr{PH}^{q}(\Omega)$ for all $q\in(0,1)$.
 Thus, by Lemma J,  we see that $h=f-\overline{g}$ also belongs to $\mathscr{PH}^{q}(\Omega)$ for all $q\in(0,1)$.

Now we prove the second part of ($\mathscr{B}_{2}$). Using the above
notations, we may assume that $u_1(0)=u_2(0)=v_1(0)=v_2(0)=0$. Since
$M_{1,\zeta}(r,u)$ is increasing on $r\in[0,1)$ for  all $\zeta\in\,
b$, by (\ref{eq-chh-2}), we see that $M_{1,\zeta}(1,u)<\infty$ for
almost all $\zeta\in\, b$. Hence it follows from  (\ref{eq-Har-Lit})
that there is an absolute constant $C>0$ such that, for all
$r\in[0,1)$ and almost all $\zeta\in\, b$,

\be\label{eq-chh-8} M_{1,\zeta}(r,v^{\ast})\leq\,
CM_{1,\zeta}(1,u)\left(1+\log\frac{1}{1-r}\right). \ee By
integrating both sides of (\ref{eq-chh-8}) over $b$, we obtain

\be\label{eq-chh-9} M_{1}(r,v^{\ast})\leq\,
C\|u\|_{1}\left(1+\log\frac{1}{1-r}\right)\leq\,
C\|f\|_{1}\left(1+\log\frac{1}{1-r}\right). \ee

Since $$|v_{2}|=|{\rm
Im}(g)|=\frac{|v^{\ast}-v|}{2}\leq\frac{|v^{\ast}|+|v|}{2},$$ by
(\ref{eq-chh-9}), we see that

\beq\label{eqv-0}
M_{1}(r,v_{2})&\leq&\frac{1}{2}\left(\int_{b}|v^{\ast}(r\zeta)|d\sigma(\zeta)+\int_{b}|v(r\zeta)|d\sigma(\zeta)\right)\\
\nonumber
&\leq&\frac{1}{2}\left(M_{1}(r,v^{\ast})+\|f\|_{1}\right)\\
\nonumber
&\leq&\frac{\|f\|_{1}}{2}\left(1+C+C\log\frac{1}{1-r}\right). \eeq
Let $F_{1}^{\ast}=-i(h-g)$ and $\widetilde{u}={\rm
Im}(F_{1}^{\ast})$. Then
\[
{\rm Re}(F_{1}^{\ast})={\rm Im}(f)=v\in\mathscr{PH}^{1}(\Omega),
\]
which, together with   the similar reasoning as in the proof of
$M_{1}(r,v^{\ast})$, implies that there is an absolute constant
$C>0$ such that

\be\label{eq-4v}
M_{1}(r,\widetilde{u})\leq\,C\|f\|_{1}\left(1+\log\frac{1}{1-r}\right).
\ee Elementary calculations lead to

$$|u_{2}|=|{\rm Re}(g)|=\frac{|u+\widetilde{u}|}{2}\leq\frac{|u|+|\widetilde{u}|}{2},$$
which, together with (\ref{eq-4v}), gives that \beq\label{eq-5v}
M_{1}(r,u_{2})&\leq&\frac{1}{2}\left(\int_{b}|\widetilde{u}(r\zeta)|d\sigma(\zeta)+\int_{b}|u(r\zeta)|d\sigma(\zeta)\right)\\
\nonumber &\leq&2^{-1}M_{1}(r,\widetilde{u})+2^{-1}\|f\|_{1}\\
\nonumber &\leq&2^{-1}C\|f\|_{1}\left(1+
\log\frac{1}{1-r}\right)+2^{-1}\|f\|_{1}. \eeq From (\ref{eqv-0})
and (\ref{eq-5v}), we conclude that there is an absolute constant
$C>0$ such that

 \beq\label{eq-6v}
M_{1}(r,g) &\leq&\,M_{1}(r,u_{2})+M_{1}(r,v_{2})\\ \nonumber
&\leq&\,C\|f\|_{1}\left(1+\log\frac{1}{1-r}\right)+\|f\|_{1}. \eeq
On the other hand, by $h=f-\overline{g}$ and (\ref{eq-6v}), we see
that there is an absolute constant $C>0$ such that \beqq
M_{1}(r,h)&\leq&M_{1}(r,f)+M_{1}(r,g)\\ \nonumber
&\leq&2\|f\|_{1}+C\|f\|_{1}\left(1+\log\frac{1}{1-r}\right). \eeqq

Next, we prove the sharpness part of (\ref{Ch-Ha-1}). Let $\Omega$
be the unit polydisk. Consider the functions
$$h(z)=g(z)=\frac{1}{1-z_{j_{0}}},~z\in\Omega,$$ where $j_{0}\in\{1,\ldots,n\}$.
It is not difficult to know that $f=h+\overline{g}=2{\rm
Re}(h)\in\mathscr{PH}^{1}(\Omega)$ (see also  \cite[Chapter 3]{Du1}
or \cite{H-L}). However,    by \cite[Theorem E]{CLW-2019}, we have
\beqq
M_{1}(r,h)&=&\left(\frac{1}{2\pi}\right)^{n}\int_{0}^{2\pi}\cdots\int_{0}^{2\pi}
|h(re^{i\theta_{1}},\ldots,re^{i\theta_{n}})|d\theta_{1}\cdots\,d\theta_{n}\\
&=&\frac{1}{2\pi}\int_{0}^{2\pi}\frac{d\theta_{j_{0}}}{|1-re^{i\theta_{j_{0}}}|}
=\sum_{k=1}^{\infty}\left(\frac{\Gamma\left(n+\frac{1}{2}\right)}{n!\Gamma\left(\frac{1}{2}\right)}\right)^{2}r^{2n}\\
&\sim&\log\frac{1}{1-r}. \eeqq

At last,  we prove ($\mathscr{B}_{3}$). Suppose that $z^0\in
\partial \Omega$ be fixed such that
\[
h_0(z^0,z^0)=\max_{z\in \overline{\Omega}}h_0(z,z),
\]
where $h_0$ denotes the Bergman metric of $\Omega$ at $0$. Then,
for $z\in\Omega$, let $$f(z)=h(z)+\overline{g(z)},$$ where
\[
h(z)=g(z)=i\log\frac{h_0(z^0,z^0)+h_0(z,z^0)}{h_0(z^0,z^0)-h_0(z,z^0)}.
\]
By \cite[Theorem 6.5]{Lo}, $z^0\in b$. Also, it is not difficult to
know that $h,~g\notin\mathscr{PH}^{\infty}(\Omega)$ and
$f\in\mathscr{PH}^{\infty}(\Omega).$ The proof of this theorem is
finished. \qed

\subsection{The proof of Theorem \ref{thm-2}}
If $\|f\|_{p,\Omega,\omega}=\infty$, then it is obvious. Without
loss of generality, we assume that $\|f\|_{p,\Omega,\omega}<\infty$.
Let $f=h+\overline{g}=u+iv$, where $h=u_{1}+iv_{1}$ and
$g=u_{2}+iv_{2}$. Set $F=h+g$. Then ${\rm Re}(F)={\rm Re}(f)$. Since
$\|f\|_{p,\Omega,\omega}<\infty,$ we see that both
$\|u\|_{p,\Omega,\omega}$ and $\|v\|_{p,\Omega,\omega}$ are finite.
This gives $\|{\rm
Re}(F)\|_{p,\Omega,\omega}=\|u\|_{p,\Omega,\omega}\leq
\|f\|_{p,\Omega,\omega}<\infty.$
We have

$$ \inf_{a\in\mathbb{R}}\|{\rm
Re}(F)-a\|_{p,\Omega,\omega}\leq \|{\rm
Re}(F)\|_{p,\Omega,\omega}\leq \|f\|_{p,\Omega,\omega}<\infty.
$$
It follows from the Cauchy Riemann equations and \cite[Theorem
3.1]{N} that there is a positive constant $C$ depending only on $p$,
$q$, $n$,   $\delta$ and $M$ such that
$$\inf_{a\in\mathbb{R}}\|{\rm Im}(F)-a\|_{p,\Omega,\omega}\leq\, C\inf_{a\in\mathbb{R}}\|{\rm Re}(F)-a\|_{p,\Omega,\omega}
\leq C\|f\|_{p,\Omega,\omega}<\infty.$$
 Without loss of generality, let $$\|{\rm Im}(F)-a_{0}\|_{p,\Omega,\omega}
 =\inf_{a\in\mathbb{R}}\|{\rm Im}(F)-a\|_{p,\Omega,\omega}.$$
Since $${\rm Im}(g)=v_{2}=\frac{\mbox{\rm Im}(F)-v}{2},$$ we see
that

\begin{eqnarray*}
 \left\|{\rm
 Im}(g)-\frac{a_0}{2}\right\|_{p,\Omega,\omega}&=&\frac{1}{2}\left(\int_{\Omega}|{\rm Im}(F(z))-a_0-v(z)|^{p}\omega(z)d\mu(z)\right)^{\frac{1}{p}}\\
 &\leq&\frac{1}{2}C_0\left(\|{\rm Im}(F)-a_0\|_{p,\Omega,\omega}+\|v\|_{p,\Omega,\omega}\right)\\
 &\leq & \frac{1}{2}C_0(C+1)\|f\|_{p,\Omega,\omega},
 \end{eqnarray*}
 where
$C_{0}=2^{\max\left\{1-\frac{1}{p},\frac{1}{p}-1\right\}}.$ Using
the similar method, we obtain that there exists a constant $a_1\in
\mathbb{R}$ such that
$$\|{\rm Re}(g)-a_1\|_{p,\Omega,\omega}
\leq C\left\|{\rm
 Im}(g)-\frac{a_0}{2}\right\|_{p,\Omega,\omega}
 \leq
 \frac{1}{2}C_0C(C+1)\|f\|_{p,\Omega,\omega}.$$
Consequently, by Lemma J, we have

\beq \label{eq-ch-1} \left\|g-a_1-\frac{a_0}{2}i
\right\|_{p,\Omega,\omega} &\leq& C_{0} \bigg(\|{\rm
Re}(g)-a_1\|_{p,\Omega,\omega}\\ \nonumber
&&+\Big\|{\rm
Im}(g)-\frac{a_0}{2}\Big\|_{p,\Omega,\omega}\bigg)
\\
\nonumber &\leq & \frac{1}{2}C_0^2(C+1)^2\|f\|_{p,\Omega,\omega}.
\eeq

Since $h=f-\overline{g}$, by (\ref{eq-ch-1}) and Lemma J, we see
that

\beq\label{eq-ch-2} \left\|h+a_1-\frac{a_0}{2}i
\right\|_{p,\Omega,\omega} &\leq&
C_0\left(\|f\|_{p,\Omega,\omega}+\left\|g-a_1-\frac{a_0}{2}i
\right\|_{p,\Omega,\omega}\right)\\ \nonumber
&\leq&
C\|f\|_{p,\Omega,\omega}. \eeq Combining  (\ref{eq-ch-1}) and
(\ref{eq-ch-2}) gives the desired result. \qed

\subsection{The proof of Theorem \ref{thm-2c}}

We use the following lemma to prove Theorem \ref{thm-2c}.

\begin{Lem}\label{Lemx-3}
Let $\Omega$ be  a bounded domain in $\mathbb{C}^n$ and let
$\omega\in L_{loc}^{1}(\mathbb{C}^{n})$ with $\omega>0$ a. e..
Assume that $p\in[1,\infty)$. Then for complex valued functions
$f\in L^p(\Omega, \omega)$, we have $|f_{\Omega,\omega}|<\infty$ and
 \beq\label{eq-3-3-2}
\| f-f_{\Omega,\omega}\|_{p,\Omega, \omega}\leq
2^{2-\frac{1}{p}}\inf_{a\in \mathbb{C}}\|f-a\|_{p,\Omega,\omega}.
\eeq Also, for real valued functions $u\in L^p(\Omega, \omega)$, we
have $|u_{\Omega,\omega}|<\infty$ and
\[
\| u-u_{\Omega,\omega}\|_{p,\Omega, \omega}\leq
2^{2-\frac{1}{p}}\inf_{a\in \mathbb{R}}\|u-a\|_{p,\Omega,\omega}.
\]
\end{Lem}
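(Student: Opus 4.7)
The plan is to reduce the claim to a triangle-inequality/H\"older argument. Since $\Omega$ is bounded, $\omega\in L^1_{\mathrm{loc}}(\mathbb{C}^n)$ is integrable over $\Omega$, and since $\omega>0$ a.e.~we have $\int_\Omega \omega\,d\mu \in (0,\infty)$, so the average $f_{\Omega,\omega}$ makes sense as soon as the numerator is finite.

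For any constant $a\in\mathbb{C}$, linearity of the average gives $f_{\Omega,\omega}-a=(f-a)_{\Omega,\omega}$. Applying H\"older's inequality with exponents $p$ and $p/(p-1)$ to $|f-a|\cdot\omega=|f-a|\omega^{1/p}\cdot\omega^{1/p'}$, I obtain
\[
|f_{\Omega,\omega}-a|\leq \frac{1}{\int_\Omega\omega\,d\mu}\int_\Omega|f-a|\omega\,d\mu\leq \frac{\|f-a\|_{p,\Omega,\omega}}{\left(\int_\Omega\omega\,d\mu\right)^{1/p}}.
\]
Taking $a=0$ and using $f\in L^p(\Omega,\omega)$ shows $|f_{\Omega,\omega}|<\infty$. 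Multiplying both sides by $\bigl(\int_\Omega\omega\,d\mu\bigr)^{1/p}$, and recalling that for a constant $c$ one has $\|c\|_{p,\Omega,\omega}=|c|\bigl(\int_\Omega\omega\,d\mu\bigr)^{1/p}$, I get the key intermediate bound
\[
\|f_{\Omega,\omega}-a\|_{p,\Omega,\omega}\leq \|f-a\|_{p,\Omega,\omega}.
\]

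Now I combine this with the triangle inequality in the form supplied by Lemma J (with $\tau=p\geq 1$):
\[
|f-f_{\Omega,\omega}|^p\leq\bigl(|f-a|+|a-f_{\Omega,\omega}|\bigr)^p\leq 2^{p-1}\bigl(|f-a|^p+|a-f_{\Omega,\omega}|^p\bigr).
\]
Integrating against $\omega\,d\mu$ and using the intermediate bound yields
\[
\|f-f_{\Omega,\omega}\|_{p,\Omega,\omega}^p\leq 2^{p-1}\bigl(\|f-a\|_{p,\Omega,\omega}^p+\|a-f_{\Omega,\omega}\|_{p,\Omega,\omega}^p\bigr)\leq 2^{p}\|f-a\|_{p,\Omega,\omega}^p,
\]
so $\|f-f_{\Omega,\omega}\|_{p,\Omega,\omega}\leq 2\|f-a\|_{p,\Omega,\omega}\leq 2^{2-1/p}\|f-a\|_{p,\Omega,\omega}$ (since $2\leq 2^{2-1/p}$ for $p\geq 1$). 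Taking the infimum over $a\in\mathbb{C}$ finishes the complex-valued case. For the real-valued statement, observe that $u_{\Omega,\omega}$ is real whenever $u$ is, so the same argument goes through with $a$ ranging over $\mathbb{R}$.

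There is no serious obstacle here: the argument is essentially a weighted H\"older estimate followed by a triangle inequality. The only points needing a little care are verifying $0<\int_\Omega\omega\,d\mu<\infty$ so the average is legitimately defined, and tracking the constants through Lemma J so as to land on $2^{2-1/p}$ rather than a larger factor.
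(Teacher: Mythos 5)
Your proof is correct and follows essentially the same route as the paper: Hölder's inequality to show $\|f_{\Omega,\omega}-a\|_{p,\Omega,\omega}\leq\|f-a\|_{p,\Omega,\omega}$, followed by the Lemma J triangle-type estimate. The only (cosmetic) difference is that you apply Lemma J pointwise and integrate, which actually yields the slightly sharper constant $2\leq 2^{2-1/p}$, whereas the paper applies it at the level of norms to get $2^{2-1/p}$ directly.
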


\begin{proof}
We give a proof for $|f_{\Omega,\omega}|<\infty$ and that for
(\ref{eq-3-3-2}). The proof for the second part is similar. Since
$f\in L^p(\Omega, \omega)$, $p\in[1,\infty)$,  $\Omega$ is bounded
and $\omega\in L_{loc}^{1}(\mathbb{C}^{n})$ with $\omega>0$ a.e., we
obtain $|f_{\Omega,\omega}|<\infty$.

Without loss of generality, let
$$\|f-a_{0}\|_{p,\Omega,\omega}
 =\inf_{a\in\mathbb{C}}\|f-a\|_{p,\Omega,\omega}.$$
 Then, by Lemma J, we have
\begin{eqnarray*}
\| f-f_{\Omega, \omega}\|_{p,\Omega, \omega} &=& \|
(f-a_0)+(a_0-f_{\Omega, \omega})\|_{p,\Omega, \omega}
\\
&\leq& 2^{1-\frac{1}{p}}\left(\| f-a_0\|_{p,\Omega, \omega}
+\|a_0-f_{\Omega, \omega}\|_{p,\Omega, \omega}\right).
\end{eqnarray*}
Since
\begin{eqnarray*}
\|a_0-f_{\Omega, \omega}\|_{p,\Omega, \omega} &=& |a_0-f_{\Omega,
\omega}| \left(\int_{\Omega}\omega(z)d\mu(z)\right)^{\frac{1}{p}}
\\
&\leq&
\int_{\Omega}|a_0-f(z)|\omega(z)d\mu(z)\left(\int_{\Omega}\omega(z)d\mu(z)\right)^{\frac{1}{p}-1}
\\
&\leq & \| a_0-f\|_{p,\Omega,\omega},
\end{eqnarray*}
where the last inequality in the case $p>1$ follows from the
H\"{o}lder inequality. Thus, we have
\[
\| f-f_{\Omega, \omega}\|_{p,\Omega, \omega} \leq
2^{2-\frac{1}{p}}\| f-a_0\|_{p,\Omega, \omega}.
\]
This completes the proof.
\end{proof}

Theorem \ref{thm-2c} follows from Theorem \ref{thm-2} and Lemma
\ref{Lemx-3}. \qed

\section{The  Hardy-Littlewood type theorems and smooth moduli}\label{sec4}

Let's  begin this section with the following result.

\begin{ThmM}{\rm (\cite[Theorem  2.3]{K-2017})}\label{ThmF}
Let $u$ be a pluriharmonic function of $\mathbb{B}^{n}$ into
$(-1,1)$. Then, for $z\in\mathbb{B}^{n}$, the following sharp
inequality holds:
$$|\nabla\,u(z)|\leq\frac{4}{\pi}\frac{1-u^{2}(z)}{1-|z|^{2}}.$$
\end{ThmM}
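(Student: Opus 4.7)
The plan is to reduce Theorem~M to the sharp one-dimensional harmonic Schwarz--Pick estimate by slicing along complex lines. Since a pluriharmonic function restricted to any complex affine line is harmonic in the slicing parameter, this reduction is natural. I would first note that for real-valued $u$,
$$|\nabla u(z_0)|^2 = 4\sum_{j=1}^n \Bigl|\tfrac{\partial u}{\partial z_j}(z_0)\Bigr|^2,$$
and by Cauchy--Schwarz, $|\nabla u(z_0)| = 2\sup_{|\eta|=1}\bigl|\sum_j \eta_j\, \partial u/\partial z_j(z_0)\bigr|$, so it suffices to bound the complex directional derivative uniformly in unit $\eta\in\mathbb{C}^n$.

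For fixed $z_0\in\mathbb{B}^n$ and unit $\eta$, I would set $h(\zeta):=u(z_0+\zeta\eta)$, which is harmonic on the disk $D=\{\zeta\in\mathbb{C}:z_0+\zeta\eta\in\mathbb{B}^n\}$ and valued in $(-1,1)$. A routine computation expresses $D$ as the disk with center $\zeta^\ast=-\overline{\langle\eta,z_0\rangle}$ and radius $R=\sqrt{1-|z_0|^2+|\langle\eta,z_0\rangle|^2}$, which gives the crucial identity $R^2-|\zeta^\ast|^2 = 1-|z_0|^2$. A parallel chain-rule calculation, together with the fact that $h$ is real-valued, yields $|\nabla h(0)| = 2\bigl|\sum_j \eta_j\,\partial u/\partial z_j(z_0)\bigr|$, so $|\nabla u(z_0)| = \sup_{|\eta|=1}|\nabla h(0)|$.

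The key analytic input is the sharp one-dimensional estimate: for harmonic $H\colon\mathbb{D}\to(-1,1)$,
$$|\nabla H(w)|\leq \frac{4}{\pi}\,\frac{1-H^2(w)}{1-|w|^2},\qquad w\in\mathbb{D}.$$
This is the real-valued special case of Kalaj's sharp Schwarz--Pick inequality for harmonic mappings; its proof rests on the Poisson representation together with an explicit extremal computation, with the equality case realized by $H_0(w)=\tfrac{2}{\pi}\arg\tfrac{1+w}{1-w}$. Transferring this inequality to $D(\zeta^\ast,R)$ via the affine map $w\mapsto\zeta^\ast+Rw$ and evaluating at $\zeta=0$ produces
$$|\nabla h(0)|\leq \frac{4R}{\pi}\,\frac{1-u^2(z_0)}{R^2-|\zeta^\ast|^2} = \frac{4R}{\pi}\,\frac{1-u^2(z_0)}{1-|z_0|^2}.$$
Since $R=\sqrt{1-|z_0|^2+|\langle\eta,z_0\rangle|^2}\leq 1$ for every unit $\eta$, taking the supremum over $|\eta|=1$ gives the desired bound $|\nabla u(z_0)|\leq \tfrac{4}{\pi}(1-u^2(z_0))/(1-|z_0|^2)$.

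The principal obstacle is the 1D sharp inequality with the factor $1-H^2(w)$: the classical Heinz--Khavinson bound merely yields $|\nabla H(0)|\leq 4/\pi$ when $H(0)=0$, and recovering the precise $(1-H^2)$ factor at arbitrary interior points requires a careful estimate of the $z$-gradient of the Poisson kernel paired with the extremal function $H_0$ above, which both attains equality and certifies sharpness. Once the 1D inequality is granted, the slicing reduction is essentially routine, and the sharpness in $\mathbb{B}^n$ follows by pulling $H_0$ back along a one-dimensional complex slice through the origin.
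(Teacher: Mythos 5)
Your argument is correct. Note first that the paper does not prove Theorem~M at all: it is quoted verbatim from Kalaj \cite[Theorem 2.3]{K-2017}, so there is no in-paper proof to compare against. Your slicing reduction is the natural (and, as far as I can tell, the standard) way to obtain the ball case: all the individual steps check out --- the identity $|\nabla u|^2=4\sum_j|\partial u/\partial z_j|^2$ for real $u$, the description of the slice domain $D$ as the disk centered at $\zeta^\ast=-\overline{\langle\eta,z_0\rangle}$ with $R^2-|\zeta^\ast|^2=1-|z_0|^2$, the relation $|\nabla h(0)|=2\bigl|\sum_j\eta_j\,\partial u/\partial z_j(z_0)\bigr|$, the affine transfer of the planar inequality, and the bound $R\le 1$ via Cauchy--Schwarz. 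The one point worth flagging is that the ``principal obstacle'' you identify --- the sharp planar inequality $|\nabla H(w)|\le\frac{4}{\pi}\,\frac{1-H^2(w)}{1-|w|^2}$ for harmonic $H:\mathbb{D}\to(-1,1)$ --- is not something you need to re-derive: it is exactly Theorem~N of this paper (Kalaj--Vuorinen, \cite[Theorem 1.8]{KV}), stated for $\mathbf{B}^2=\mathbb{D}$, so you may simply cite it. With that citation in place, your proof is a complete and self-contained derivation of Theorem~M from Theorem~N, including the sharpness via the extremal $H_0(w)=\frac{2}{\pi}\arg\frac{1+w}{1-w}$ pulled back along a complex line through the origin.
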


\subsection{The proof of Theorem \ref{thm-3}}
We first prove
($\mathscr{C}_{1}$)$\Leftrightarrow$($\mathscr{C}_{2}$). Let's begin
to prove ($\mathscr{C}_{1}$)$\Rightarrow$($\mathscr{C}_{2}$).
 For any fixed $z=(z_{1},\ldots, z_{n}) \in\mathscr{E}$, let
$r=2d_{\mathscr{E}}(z)/3$. By the assumption, for
$w\in\overline{\mathbb{B}^{n}(z,r)},$ we see that there is a
positive constant $C$ such that \be\label{eq-1t}|u(z)-u(w)|\leq\,
C\psi(|z-w|)\leq\,C\psi(r).\ee For
$w=(w_{1},\ldots,w_{n})\in\mathbb{B}^{n}(z,r),$ it follows from the
invariant Poisson integral formula for real valued pluriharmonic
functions that \be\label{Poisson-1t} u(w)=\int_{\partial
\mathbb{B}^n}P_{r}(w-z,\xi)u(z+r\xi)d\sigma(\xi),\ee where
$P_{r}(w-z,\xi)=\frac{\left(r^{2}-|w-z|^{2}\right)^{n}}{\left|r-\langle
w-z,\xi\rangle\right|^{2n}}$.
 For $j\in\{1,\ldots,n\},$ elementary calculations lead to

\beq\label{eq-3ct} \left|\frac{\partial}{\partial
w_{j}}P_{r}(w-z,\xi)\right|&=&\bigg|
\frac{-n(\overline{w}_{j}-\overline{z}_{j})\left(r^{2}-|w-z|^{2}\right)^{n-1}}{\left|r-\langle
w-z,\xi\rangle\right|^{2n}}\\ \nonumber &&+
\frac{n\overline{\xi}_{j}\left(r^{2}-|w-z|^{2}\right)^{n}(r-\overline{\langle
w-z,\xi\rangle})}{\left|r-\langle
w-z,\xi\rangle\right|^{2n+2}}\bigg|\\ \nonumber
&\leq&\frac{n|w_{j}-z_{j}|\left(r^{2}-|w-z|^{2}\right)^{n-1}}{\left|r-\langle
w-z,\xi\rangle\right|^{2n}}\\ \nonumber
&&+\frac{n\left(r^{2}-|w-z|^{2}\right)^{n}|r-\overline{\langle
w-z,\xi\rangle}|}{\left|r-\langle w-z,\xi\rangle\right|^{2n+2}}.
\eeq It follows from (\ref{eq-3ct}) that, for
$w\in\mathbb{B}^{n}(z,3r/4)$ and $j\in\{1,\ldots,n\}$,

\beq\label{eq-4ct} \left|\frac{\partial}{\partial
w_{j}}P_{r}(w-z,\xi)\right| &\leq&\,n
\bigg(\frac{\frac{3}{4}r^{2n-1}}{\left(r-|w-z|\right)^{2n}}+\frac{\frac{7}{4}r^{2n+1}}{\left(r-|w-z|\right)^{2n+2}}\bigg)\\
\nonumber &\leq&\,\frac{115\cdot4^{2n-1}n}{r}. \eeq


Combining  (\ref{eq-1t}) and (\ref{eq-4ct}) gives
\beqq\label{eq-fff-1} \nonumber | u_{w}(w)|
 \nonumber&\leq&\left(\sum_{j=1}^{n}\bigg(\int_{\partial \mathbb{B}^n}\bigg|\frac{\partial}{\partial w_{j}}P_{r}(w-z,\xi)\bigg||u(z+r\xi)-u(z)|d\sigma(\xi)\bigg)^{2}\right)^{\frac{1}{2}}\\  \nonumber
 &\leq&\frac{115\cdot4^{2n-1}n\sqrt{n}}{r}\int_{\partial \mathbb{B}^n}|(u(z+r\xi)-u(z))|d\sigma(\xi)\\   \nonumber
 &\leq&115\cdot4^{2n-1}n\sqrt{n}C\frac{ \psi(r)}{r}
 \eeqq
and \be\label{jj-1}| u_{\overline{w}}(w)|=|
u_{w}(w)|\leq115\cdot4^{2n-1}n\sqrt{n}C\frac{ \psi(r)}{r},\ee where
$w\in\mathbb{B}^{n}(z,3r/4)$,  $u_{w}=(u_{w_{1}},\ldots,u_{w_{n}})$
and
$u_{\overline{w}}=(u_{\overline{w}_{1}},\ldots,u_{\overline{w}_{n}})$.
Taking $w=z$ in (\ref{jj-1}), we obtain \beq\label{eq-2t}
|\nabla\,u(z)|&=&| u_{w}(z)|+| u_{\overline{w}}(z)|\leq\,C_{1}\frac{
\psi(r)}{r}=\frac{3C_{1}}{2}\frac{
\psi\left(\frac{2d_{\mathscr{E}}(z)}{3}\right)}{d_{\mathscr{E}}(z)}\\\nonumber
&\leq&\,\frac{3C_{1}}{2}\frac{
\psi(d_{\mathscr{E}}(z))}{d_{\mathscr{E}}(z)}, \eeq where
$C_{1}=230\cdot4^{2n-1}n\sqrt{n}C$.


Now, we prove ($\mathscr{C}_{2}$)$\Rightarrow$($\mathscr{C}_{1}$).
Since $\mathscr{E}$ is an $\mathscr{L}_{\psi}$-extension domain, we
see that for all $z_{1},~z_{2}\in\Omega$, by using (\ref{eq1.0}),
there is a positive constant $C$ which does not depend on $z_1$,
$z_2$ and a rectifiable curve $\gamma\subset\Omega$ joining $z_{1}$
to $z_{2}$ such that
\beqq\nonumber|u(z_{1})-u(z_{2})|&\leq&\int_{\gamma}(| u_{w}(\zeta)|+| u_{\overline{w}}(\zeta)|)ds(\zeta)\\
\nonumber
&\leq&\,C_{1}\int_{\gamma}\frac{\psi(d_{\mathscr{E}}(\zeta))}{d_{\mathscr{E}}(\zeta)}\,ds(\zeta)\\
\nonumber &\leq& C\psi(|z_1-z_2|).\eeqq

We come to prove ($\mathscr{C}_{1}$)$\Leftrightarrow$($\mathscr{C}_{3}$). Since ($\mathscr{C}_{1}$)$\Rightarrow$($\mathscr{C}_{3}$) is obvious, we only need to prove ($\mathscr{C}_{3}$)$\Rightarrow$($\mathscr{C}_{1}$). 
Without loss of generality, we assume that
$\sup_{z\in\mathscr{E}}|u(z)|>0$. Let $z\in\mathscr{E}$ be fixed.
Without loss of generality, let $\zeta_{1}\in\partial\mathscr{E}$
such that $d_{\mathscr{E}}(z)=|z-\zeta_{1}|$. Since $\psi(t)/t$ is
non-increasing for $t>0$, we see that
\be\label{Ma-1}\frac{\psi(2t)}{2t}\leq\frac{\psi(t)}{t}. \ee It
follows from (\ref{Ma-1}) that, for all
$w\in\mathbb{B}^{n}(z,d_{\mathscr{E}}(z))$, there is a positive
constant $C$ such that

\beqq\label{Ma-0}
|u(w)|-|u(z)|&\leq&\big||u(z)|-|u(\zeta_{1})|\big|+\big||u(\zeta_{1})|-|u(w)|\big|\\
\nonumber
&\leq&C\psi(2d_{\mathscr{E}}(z))+C\psi(d_{\mathscr{E}}(z))\\
&\leq&3C\psi(d_{\mathscr{E}}(z)). \eeqq Consequently,
\be\label{eq-6ct}
M_{z}-|u(z)|=\sup_{w\in\mathbb{B}^{n}(z,d_{\mathscr{E}}(z))}\left(|u(w)|-|u(z)|\right)\leq3C\psi(d_{\mathscr{E}}(z)),
\ee where $M_{z}:=\sup\{|u(\xi)|:~|\xi-z|<d_{\mathscr{E}}(z)\}$. By
(\ref{eq-6ct}) and the identity Theorem, we have  $0<M_{z}<\infty$.
Let
$$\mathscr{U}(w)=\frac{u(z+d_{\mathscr{E}}(z)w)}{M_{z}},~w\in\mathbb{B}^{n}.$$

By Theorem M, we have \beqq
\frac{d_{\mathscr{E}}(z)}{M_{z}}|\nabla\,u(z)|&=&|\nabla\mathscr{U}(0)|\leq
\frac{4}{\pi}\left(1-|\mathscr{U}(0)|^{2}\right)\leq\frac{8}{\pi}\left(1-|\mathscr{U}(0)|\right)\\
&=& \frac{8}{\pi}\left(1-\frac{|u(z)|}{M_{z}}\right). \eeqq
Consequently, we have

\[
d_{\mathscr{E}}(z)|\nabla\,u(z)|\leq\frac{8}{\pi}(M_{z}-|u(z)|),
\]
which, together with (\ref{eq-6ct}), yields that \be\label{eq-7ct}
|\nabla\,u(z)|\leq\frac{24C}{\pi}\frac{\psi(d_{\mathscr{E}}(z))}{d_{\mathscr{E}}(z)}.
\ee Since ($\mathscr{C}_{2}$) implies ($\mathscr{C}_{1}$), by
(\ref{eq-7ct}), we see that $u\in\mathscr{L}_{\psi}(\mathscr{E})$.

Next, we prove
($\mathscr{C}_{1}$)$\Leftrightarrow$($\mathscr{C}_{4}$). We first
show ($\mathscr{C}_{1}$)$\Rightarrow$($\mathscr{C}_{4}$). Let  $z
\in\mathscr{E}$ be a fixed point. For
$w\in\mathbb{B}^{n}(z,d_{\mathscr{E}}(z)/2)$, it follows from
(\ref{jj-1}) that \be\label{eq-8ct} |\nabla\,u(w)|=| u_{w}(w)|+|
u_{\overline{w}}(w)| \leq\,\frac{3C_{1}}{2}\frac{
\psi(d_{\mathscr{E}}(z))}{d_{\mathscr{E}}(z)}, \ee where $C_{1}$ is
the same as in (\ref{eq-2t}). Raise both sides of (\ref{eq-8ct}) to
the $p$-th power and integrate over
$\mathbb{B}^{n}(z,d_{\mathscr{E}}(z)/2)$. This gives
 $$I_{u,p}(z)\leq\,\frac{3C_{1}}{2}\frac{\psi(d_{\mathscr{E}}(z))}{d_{\mathscr{E}}(z)}.$$

Now we begin to prove
($\mathscr{C}_{4}$)$\Rightarrow$($\mathscr{C}_{1}$). Since
$p\geq\frac{2n-2}{2n-1}$ for $n\geq 2$ and $p>0$ for $n=1$, for any
given point $z\in\mathscr{E}$, it follows from \cite[Theorem  A]{SW}
that  $|\nabla\,u|^{p}$ is subharmonic
in $\overline{\mathbb{B}^{n}(z,d_{\mathscr{E}}(z)/2)}$. 
Then

\beqq
|\nabla\,u(z)|^{p}\leq\frac{1}{|\mathbb{B}^{n}(z,d_{\mathscr{E}}(z)/2)|}\int_{\mathbb{B}^{n}(z,d_{\mathscr{E}}(z)/2)}|\nabla\,u(w)|^{p}d\mu(w),
\eeqq which implies that there is a positive constant $C$ such that

\be\label{eq-10ct}
|\nabla\,u(z)|\leq\,I_{u,p}(z)\leq\,C\frac{\psi(d_{\mathscr{E}}(z))}{d_{\mathscr{E}}(z)}.
\ee Since ($\mathscr{C}_{2}$) implies ($\mathscr{C}_{1}$), by
(\ref{eq-10ct}), we see that  ($\mathscr{C}_{4}$) implies
($\mathscr{C}_{1}$).


By using similar reasoning as in the proof of
``($\mathscr{C}_{1}$)$\Leftrightarrow$($\mathscr{C}_{2}$)$\Leftrightarrow$($\mathscr{C}_{3}$)$
\Leftrightarrow$($\mathscr{C}_{4}$)", we conclude that
($\mathscr{C}_{5}$)$\Leftrightarrow$($\mathscr{C}_{6}$)$\Leftrightarrow$($\mathscr{C}_{7}$)$\Leftrightarrow$($\mathscr{C}_{8}$).
 Since $|\nabla\,u|=|\nabla\,v|$, we see that
 ($\mathscr{C}_{2}$) is equivalent  to ($\mathscr{C}_{6}$).
 Therefore,

\be\label{pp-1}(\mathscr{C}_{1})\Leftrightarrow(\mathscr{C}_{2})\Leftrightarrow(\mathscr{C}_{3})\Leftrightarrow(\mathscr{C}_{4})\Leftrightarrow(\mathscr{C}_{5})
\Leftrightarrow(\mathscr{C}_{6})\Leftrightarrow(\mathscr{C}_{7})\Leftrightarrow(\mathscr{C}_{8}).\ee

At last, we show that
\be\label{pp-2}(\mathscr{C}_{1})\Leftrightarrow(\mathscr{C}_{9}).\ee
 Since $(\mathscr{C}_{9})\Rightarrow(\mathscr{C}_{1})$ is obvious,
we only need to prove
($\mathscr{C}_{1}$)$\Rightarrow$($\mathscr{C}_{9}$). Let
$u\in\mathscr{L}_{\psi}(\mathscr{E})$. Then
$v\in\mathscr{L}_{\psi}(\mathscr{E})$, which implies that there is a
positive constant $C$ such that \beqq
|f(z_{1})-f(z_{2})|&\leq&|u(z_{1})-u(z_{2})|+|v(z_{1})-v(z_{2})|\\
&\leq&\,C\psi(|z_{1}-z_{2}|)+C\psi(|z_{1}-z_{2}|)\\
&=&2C\psi(|z_{1}-z_{2}|) \eeqq for  $z_{1},~z_{2}\in\mathscr{E}$.
Consequently, ($\mathscr{C}_{9}$) holds. Combining (\ref{pp-1}),
(\ref{pp-2})  and Theorem E gives

\beqq
(\mathscr{C}_{1})&\Leftrightarrow&(\mathscr{C}_{2})\Leftrightarrow(\mathscr{C}_{3})\Leftrightarrow(\mathscr{C}_{4})\Leftrightarrow(\mathscr{C}_{5})
\Leftrightarrow(\mathscr{C}_{6})\Leftrightarrow(\mathscr{C}_{7})
\Leftrightarrow(\mathscr{C}_{8})\Leftrightarrow(\mathscr{C}_{9})\\
&\Leftrightarrow&(\mathscr{C}_{10})\Leftrightarrow(\mathscr{C}_{11}).
\eeqq

 The proof of this theorem is finished.
\qed

\subsection{The proof of Theorem \ref{thm-4}}
Let's prove ($\mathscr{D}_{1}$)$\Leftrightarrow$($\mathscr{D}_{2}$).
We first prove $(\mathscr{D}_{1})\Rightarrow(\mathscr{D}_{2})$. For
any fixed $z=(z_{1},\ldots, z_{n}) \in\Omega_{1}$, let
$r=2d_{\Omega_{1}}(z)/3$.  It follows from (\ref{Poisson-1t}) and
(\ref{eq-4ct}) that, for $w\in\mathbb{B}^{n}(z,3r/4)$,
\be\label{jjj-1} | u_{w}(w)|
 \leq\frac{C(n)}{r}\int_{\partial \mathbb{B}^n}|(u(z+r\xi)-u(z))|d\sigma(\xi),
 \ee where $C(n)=115\cdot4^{2n-1}n\sqrt{n}.$ Next, we estimate $|u(w_{\xi})-u(z)|$ for $\xi\in \partial \mathbb{B}^n$, where $w_{\xi}=z+r\xi$.
By the assumption, we see that there is a positive constant $C$ such
that

\be\label{eq-py-0}
|u(w_{\xi})-u(z)|\leq\,Cd_{\psi,\Omega_{1}}(w_{\xi},~z)\leq\,C\int_{[z,w_{\xi}]}\frac{\psi(d_{\Omega_{1}}(\zeta))}{d_{\Omega_{1}}(\zeta)}\,ds(\zeta),
\ee where $[z,w_{\xi}]$ is the straight segment with endpoints $z$
and $w_{\xi}$. Since
$[z,w_{\xi}]\subset\overline{\mathbb{B}^{n}(z,r)}\subset\Omega_{1}$
and $\psi(t)/t$ is non-increasing in $(0,\infty)$, we see that

\be\label{eq-py-1}
\frac{\psi(d_{\Omega_{1}}(\zeta))}{d_{\Omega_{1}}(\zeta)}\leq\frac{\psi(d_{\mathbb{B}^{n}(z,r)}(\zeta))}{d_{\mathbb{B}^{n}(z,r)}(\zeta)}
\ee for $\zeta\in[z,w_{\xi}]$. Note that
\be\label{eq-py-2}d_{\mathbb{B}^{n}(z,r)}(\zeta)=r-|\zeta-z|.\ee
Combining (\ref{eq2x}), (\ref{eq-py-0}), (\ref{eq-py-1}) and
(\ref{eq-py-2}) gives that there is a positive constant $C$ such
that

\beq\label{eq-py-3}
|u(w_{\xi})-u(z)|&\leq&\,C\int_{[z,w_{\xi}]}\frac{\psi(r-|\zeta-z|)}{r-|\zeta-z|}\,ds(\zeta)=\,C\int_{0}^{r}\frac{\psi(t)}{t}dt
\\ \nonumber &\leq&C\psi(r),
\eeq where the last inequality follows from the assumptions that
$\psi$ is a fast majorant and $\Omega_1$ is bounded. Since $\psi$ is
a continuous increasing function, by (\ref{jjj-1}) and
(\ref{eq-py-3}),  we conclude that

\beq\label{eq-py-4} | u_{w}(w)|&\leq&\frac{C\cdot
C(n)}{r}\int_{\partial \mathbb{B}^n}\psi(r)d\sigma(\xi)=C\cdot
C(n)\frac{\psi(r)}{r}\\ \nonumber &=& C\cdot
C(n)\frac{\psi\left(\frac{2}{3}d_{\Omega_{1}}(z)\right)}{\frac{2}{3}d_{\Omega_{1}}(z)}\\
\nonumber &\leq&\frac{3}{2}C\cdot
C(n)\frac{\psi\left(d_{\Omega_{1}}(z)\right)}{d_{\Omega_{1}}(z)}.
 \eeq
Taking $w=z$ in (\ref{eq-py-4}), we obtain \beqq |\nabla\,u(z)|=|
u_{w}(z)|+| u_{\overline{w}}(z)|=2| u_{w}(z)|\leq 3C\cdot
C(n)\frac{\psi\left(d_{\Omega_{1}}(z)\right)}{d_{\Omega_{1}}(z)}.
\eeqq

Now, we  prove $(\mathscr{D}_{2})\Rightarrow(\mathscr{D}_{1})$. By
the assumption, we see that, for $z_{1},~z_{2}\in\Omega_{1}$,

\beqq
|u(z_{1})-u(z_{2})|&\leq&\inf\int_{\gamma}|\nabla\,u(\zeta)|ds(\zeta)\leq
C\cdot \inf\int_{\gamma}\frac{\psi(d_{\Omega_{1}}(z))}{d_{\Omega_{1}}(z)}\,ds(z)\\
&=& C\cdot d_{\psi,\Omega_{1}}(z_{1},~z_{2}), \eeqq where the
infimum is taken over all rectifiable curves
$\gamma\subset\Omega_{1}$ joining $z_{1}$ to $z_{2}$. Consequently,
$u\in\mathscr{L}_{\psi,{\rm\,int}}(\Omega_{1})$.

Next, we  prove
($\mathscr{D}_{1}$)$\Leftrightarrow$($\mathscr{D}_{3}$). Since
($\mathscr{D}_{1}$)$\Rightarrow$($\mathscr{D}_{3}$) is obvious, we
only need to prove
($\mathscr{D}_{3}$)$\Rightarrow$($\mathscr{D}_{1}$). Without loss of
generality, we assume that $\sup_{z\in\Omega_{1}}|u(z)|>0$. Let
$z\in\Omega_{1}$ be fixed. For
$w\in\mathbb{B}^{n}(z,d_{\Omega_{1}}(z))$, there is a positive
constant $C$ such that

\be\label{eq-chh-11}
|u(w)|-|u(z)|\leq\,Cd_{\psi,\Omega_{1}}(w,~z)\leq\,C\int_{[w,z]}\frac{\psi(d_{\Omega_{1}}(\zeta))}{d_{\Omega_{1}}(\zeta)}\,ds(\zeta),
\ee where $[w,z]$ denotes the straight segment with endpoints $w$
and $z$. We observe that if $\zeta\in[w,z]$, then one has
$$[w,z]\subset\mathbb{B}^{n}(z,d_{\Omega_{1}}(z))\subset\Omega_{1}$$
and therefore

$$d_{\Omega_{1}}(\zeta)\geq\,d_{\mathbb{B}^{n}(z,d_{\Omega_{1}}(z))}(\zeta).$$
This gives

\be\label{eq-chh-12}\frac{\psi(d_{\Omega_{1}}(\zeta))}{d_{\Omega_{1}}(\zeta)}\leq\frac{\psi(d_{\mathbb{B}^{n}(z,d_{\Omega_{1}}(z))}(\zeta))}{d_{\mathbb{B}^{n}(z,d_{\Omega_{1}}(z))}(\zeta)}.\ee
Since $\psi$ is fast and $\Omega_1$ is bounded, by (\ref{eq-chh-11})
and (\ref{eq-chh-12}), we see  that there is a positive constant $C$
such that

\beqq
|u(w)|-|u(z)|&\leq&\,C\int_{[w,z]}\frac{\psi(d_{\mathbb{B}^{n}(z,d_{\Omega_{1}}(z))}(\zeta))}{d_{\mathbb{B}^{n}(z,d_{\Omega_{1}}(z))}(\zeta)}\,ds(\zeta)\\
&=&C\int_{[w,z]}\frac{\psi(d_{\Omega_{1}}(z)-|\zeta-z|)}{d_{\Omega_{1}}(z)-|\zeta-z|}\,ds(\zeta)\\
&\leq &C\int_{0}^{d_{\Omega_{1}}(z)}\frac{\psi(t)}{t}dt\\
&\leq&\,C\psi(d_{\Omega_{1}}(z)). \eeqq From this, we obtain that
\be\label{eq-chh-13} M_{z}-|u(z)|\leq\,C\psi(d_{\Omega_{1}}(z)),\ee
where $M_{z}:=\sup\{|u(\xi)|:~|\xi-z|<d_{\Omega_{1}}(z)\}$. By
(\ref{eq-chh-13}) and the identity principle, we have
$0<M_z<\infty$. Let
$$U(\eta)=\frac{u(z+d_{\Omega_{1}}(z)\eta)}{M_{z}},~\eta\in\mathbb{B}^{n},$$

From the proof of Theorem \ref{thm-3}, we have \be\label{eq-chh10}
d_{\Omega_{1}}(z)|\nabla\,u(z)|\leq\frac{8}{\pi}(M_{z}-|u(z)|). \ee
Combining (\ref{eq-chh-13}) and (\ref{eq-chh10}) yields that there
is a positive constant $C$ such that

\be\label{eq-100dt}
|\nabla\,u(z)|\leq\,C\frac{\psi(d_{\Omega_{1}}(z))}{d_{\Omega_{1}}(z)}.
\ee Since $(\mathscr{D}_{2})$ implies $(\mathscr{D}_{1})$, by
(\ref{eq-100dt}), we see that  $(\mathscr{D}_{3})$ implies
$(\mathscr{D}_{1})$.

We prove $(\mathscr{D}_{1})\Leftrightarrow(\mathscr{D}_{4})$. We
first show $(\mathscr{D}_{1})\Rightarrow(\mathscr{D}_{4})$. Let  $z
\in\Omega_{1}$ be a fixed point. For
$w\in\mathbb{B}^{n}(z,d_{\Omega_{1}}(z)/2)$, it follows from
(\ref{eq-py-4}) that \be\label{eq-8dt} |\nabla\,u(w)|=| u_{w}(w)|+|
u_{\overline{w}}(w)| \leq\,\frac{3C_{1}}{2}\frac{
\psi(d_{\Omega_{1}}(z))}{d_{\Omega_{1}}(z)}, \ee where $C_{1}=C\cdot
C(n)$ is the same as in (\ref{eq-py-4}). Raise both sides of
(\ref{eq-8dt}) to the $p$-th power and integrate over
$\mathbb{B}^{n}(z,d_{\Omega_{1}}(z)/2)$. This gives
 $$I_{u,p}(z)\leq\,\frac{3C_{1}}{2}\frac{\psi(d_{\Omega_{1}}(z))}{d_{\Omega_{1}}(z)}.$$

Now we begin to prove
($\mathscr{D}_{4}$)$\Rightarrow$($\mathscr{D}_{1}$). For any given
point $z\in\Omega_{1}$, it follows from \cite[Theorem  A]{SW} that
$|\nabla\,u|^{p}$ is subharmonic in
$\overline{\mathbb{B}^{n}(z,d_{\Omega_{1}}(z)/2)}$. Then

\beqq
|\nabla\,u(z)|^{p}\leq\frac{1}{|\mathbb{B}^{n}(z,d_{\Omega_{1}}(z)/2)|}\int_{\mathbb{B}^{n}(z,d_{\Omega_{1}}(z)/2)}|\nabla\,u(w)|^{p}d\mu(w),
\eeqq which implies that there is a positive constant $C$ such that

\be\label{eq-10dt}
|\nabla\,u(z)|\leq\,I_{u,p}(z)\leq\,C\frac{\psi(d_{\Omega_{1}}(z))}{d_{\Omega_{1}}(z)}.
\ee Since $(\mathscr{D}_{2})$ implies $(\mathscr{D}_{1})$, by
(\ref{eq-10dt}), we see that  $(\mathscr{D}_{4})$ implies
$(\mathscr{D}_{1})$.

Since the remaining proof of this theorem is similar to the proof of
Theorem \ref{thm-3}, we omit it here. The proof of this theorem is
finished. \qed

\begin{ThmN}{\rm (\cite[Theorem 1.8]{KV})}\label{KV-2012}
Let $u:~\mathbf{B}^{2}\rightarrow(-1,1)$ be a harmonic function.
Then
$$|\nabla\,u(x)|\leq\frac{4}{\pi}\frac{1-|u(x)|^{2}}{1-|x|^2},
\quad x\in \mathbf{B}^{2}.$$ In addition, this inequality is sharp.
\end{ThmN}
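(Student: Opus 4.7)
The plan is to reduce Theorem N to the case $x=0$ by exploiting conformal invariance of the Laplace equation in dimension two, and then to establish the origin estimate via a Poisson-integral variational argument. Identifying $\mathbf{B}^{2}$ with the unit disk $\mathbb{D}\subset\mathbb{C}$, fix $x_{0}\in \mathbf{B}^{2}$ and let $\psi$ be a M\"obius self-map with $\psi(0)=x_{0}$ and $|\psi'(0)|=1-|x_{0}|^{2}$. Since $\psi$ is conformal, $\tilde u:=u\circ \psi$ is harmonic on $\mathbf{B}^{2}$ with $|\tilde u|<1$, $\tilde u(0)=u(x_{0})$, and $|\nabla \tilde u(0)|=(1-|x_{0}|^{2})|\nabla u(x_{0})|$. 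The theorem therefore reduces to showing
\[
|\nabla u(0)|\le \frac{4}{\pi}\bigl(1-u(0)^{2}\bigr)
\]
for every real harmonic $u:\mathbf{B}^{2}\to (-1,1)$.

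For the origin estimate, first dilate: $u_{r}(z):=u(rz)$ for $r\in(0,1)$ extends continuously to $\overline{\mathbf{B}^{2}}$, so the Poisson representation is available; proving the bound for each $u_{r}$ and letting $r\to 1^{-}$ yields the general case. Differentiating the Poisson kernel at $z=0$ gives
\[
\partial_{x_{1}}u(0)=\frac{1}{\pi}\int_{0}^{2\pi}\cos\phi\, u(e^{i\phi})\,d\phi,\qquad \partial_{x_{2}}u(0)=\frac{1}{\pi}\int_{0}^{2\pi}\sin\phi\, u(e^{i\phi})\,d\phi,
\]
and rotating coordinates so that $\nabla u(0)$ lies along the first axis, the task becomes to maximise $\frac{1}{\pi}\int_{0}^{2\pi}\cos\phi\, u_{b}(\phi)\,d\phi$ over measurable $u_{b}$ with $|u_{b}|\le 1$ and mean value $c:=u(0)$.

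A bang-bang Lagrange-multiplier argument identifies the extremal as $u_{b}(\phi)=\operatorname{sgn}(\cos\phi-\cos\phi_{0})$ with $\phi_{0}=\pi(1+c)/2$ chosen so that the mean-value constraint holds; a direct computation gives the maximal value $4\sin\phi_{0}=4\cos\frac{\pi c}{2}$. Hence
\[
|\nabla u(0)|\le \frac{4}{\pi}\cos\frac{\pi c}{2}\le \frac{4}{\pi}\bigl(1-c^{2}\bigr),
\]
the last step being the elementary inequality $\cos\frac{\pi t}{2}\le 1-t^{2}$ on $[0,1]$, verified by checking the endpoints and analysing the sign of the second derivative of the difference. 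Sharpness of the constant $\frac{4}{\pi}$ is witnessed at $x_{0}=0$, $c=0$ by the explicit harmonic function $u_{*}(z)=\frac{2}{\pi}\operatorname{Im}\log\frac{1+z}{1-z}$, for which $u_{*}(0)=0$ and $|\nabla u_{*}(0)|=\frac{4}{\pi}$.

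The main obstacle is the variational step: identifying the bang-bang extremal within the $L^{\infty}$-ball with prescribed mean and justifying the maximiser rigorously (standard, but requiring some care with the sign of the Lagrange multiplier when $|c|$ is close to $1$). The subsequent algebraic inequality and the dilation-limit argument are routine. The whole approach crucially relies on two-dimensional conformal invariance of harmonicity, a feature absent in higher dimensions, which is the reason Theorem M (pluriharmonic, any $n$) and Theorem N (harmonic, $n=2$) appear as separate statements.
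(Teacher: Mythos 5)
This statement is quoted from Kalaj--Vuorinen \cite{KV} and the paper offers no proof of its own, so there is nothing internal to compare against; your argument is correct and is essentially the standard (indeed the original) proof: M\"obius reduction to the origin, the Poisson-kernel formula for $\nabla u(0)$, the constrained extremal problem on the boundary circle, and the elementary inequality $\cos\frac{\pi t}{2}\leq 1-t^{2}$. The one step you flag as delicate, the bang--bang extremal, closes rigorously in one line by duality: for any $\lambda\in\mathbb{R}$,
\begin{equation*}
\int_{0}^{2\pi}\cos\phi\,u_{b}\,d\phi=\int_{0}^{2\pi}(\cos\phi-\lambda)u_{b}\,d\phi+2\pi\lambda c\leq\int_{0}^{2\pi}|\cos\phi-\lambda|\,d\phi+2\pi\lambda c,
\end{equation*}
and choosing $\lambda=\cos\phi_{0}$ with $\phi_{0}=\pi(1+c)/2$ makes the right-hand side equal to $4\sin\phi_{0}=4\cos\frac{\pi c}{2}$, with no case analysis needed since $\phi_{0}\in(0,\pi)$ for all $c\in(-1,1)$.
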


\begin{ThmO}{\rm (\cite[Theorem 2.5]{KPM})}\label{KPM-2021}
Let $n\geq3$ and $u:~\mathbf{B}^{n}\rightarrow(-1,1)$ be a harmonic
function. Then
$$|\nabla\,u(x)|\leq\frac{n}{2}\frac{1-|u(x)|^{2}}{1-|x|},\quad
x\in \mathbf{B}^{n}.$$ In addition, this inequality is strict for
$n\geq4$.
\end{ThmO}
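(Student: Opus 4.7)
The plan is to prove Theorem O by first reducing the general estimate to the pointwise bound at $x=0$ via a scaling argument, and then handling that case with the Poisson integral representation combined with a linear extremal problem on the sphere. For the reduction, fix $x_0\in\mathbf{B}^n$ and set $R=1-|x_0|$, so that $\mathbf{B}^n(x_0,R)\subset\mathbf{B}^n$. The rescaled function $v(y)=u(x_0+Ry)$ is harmonic on $\mathbf{B}^n$, maps into $(-1,1)$, and satisfies $v(0)=u(x_0)$ and $\nabla v(0)=R\nabla u(x_0)$. Hence the theorem will follow from the pointwise bound
\[
|\nabla u(0)|\leq \frac{n}{2}(1-u(0)^2)
\]
for every harmonic $u:\mathbf{B}^n\to(-1,1)$. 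Observe that the denominator $1-|x|$ in the statement is precisely the radius of the largest Euclidean ball around $x$ contained in $\mathbf{B}^n$, which is why one can afford to lose the power $1-|x|$ vs. $1-|x|^2$ and thereby avoid the M\"obius-type trick that is not available for harmonic functions when $n\geq 3$.

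For the estimate at the origin, I would represent $u$ as the Poisson integral of its $L^\infty$ boundary values $u^*$ with $\|u^*\|_\infty\leq 1$. Differentiating $P(y,\zeta)=(1-|y|^2)/|y-\zeta|^n$ at $y=0$ gives $\partial_{y_k}P(0,\zeta)=n\zeta_k$, so
\[
\nabla u(0)=n\int_{\mathbf{S}^{n-1}}\zeta\,u^*(\zeta)\,d\sigma(\zeta), \qquad u(0)=\int_{\mathbf{S}^{n-1}}u^*(\zeta)\,d\sigma(\zeta).
\]
Setting $c=u(0)$ and choosing a unit vector $h$ realizing $|\nabla u(0)|=h\cdot\nabla u(0)$ reduces the problem to bounding
\[
\int_{\mathbf{S}^{n-1}}(h\cdot\zeta)\,u^*(\zeta)\,d\sigma(\zeta)\leq \frac{1-c^2}{2}
\]
over all measurable $u^*:\mathbf{S}^{n-1}\to[-1,1]$ with $\int u^*\,d\sigma=c$.

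This one-variable extremal problem I would solve by the bathtub principle: the maximizer has the bang--bang form $u^*(\zeta)=\mathrm{sgn}(h\cdot\zeta-t)$ with threshold $t\in(-1,1)$ determined by the mean constraint $\sigma\{\zeta:h\cdot\zeta>t\}=(1+c)/2$. Parametrizing by $s=h\cdot\zeta\in[-1,1]$, whose distribution on $\mathbf{S}^{n-1}$ has density proportional to $(1-s^2)^{(n-3)/2}$, collapses the problem to a scalar inequality. For $n=3$ the density is constant, a direct computation yields $t=-c$ and maximum exactly $(1-c^2)/2$, so $|\nabla u(0)|\leq \tfrac{3}{2}(1-u(0)^2)$ holds with equality in the limit. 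For $n\geq 4$ the weight $(1-s^2)^{(n-3)/2}$ concentrates mass away from the poles, the optimal threshold shifts, and the resulting maximum is strictly below $(1-c^2)/2$, accounting for the strictness claim.

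The main obstacle will be verifying, for $n\geq 4$, the scalar comparison
\[
c_n\int_{-1}^{1}|s-t(c)|(1-s^2)^{\frac{n-3}{2}}\,ds + t(c)\,c \;\leq\; \frac{1-c^2}{2},
\]
with $c_n$ the normalizing constant and $t(c)$ determined implicitly by the mean constraint, and proving strict inequality on $(-1,1)$. I would attack this by differentiating in $c$ (using the implicit relation to compute $t'(c)$), reducing the task to a monotonicity/convexity statement for a one-variable function that vanishes at the boundary values $c=\pm 1$ (where both sides trivially vanish). The cleanliness of the constant $n/2$ despite the non-sharpness for $n\geq 4$ suggests that this scalar inequality admits a streamlined proof via an integration by parts that replaces $(1-s^2)^{(n-3)/2}$ by a derivative identity, which is where I expect the real work of Theorem O to lie.
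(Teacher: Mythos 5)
This is Theorem O, which the paper merely quotes from \cite{KPM} and never proves, so there is no in-paper argument to compare against. Your architecture --- rescale by $R=1-|x_0|$ to reduce to a gradient bound at the origin, represent $u$ as the Poisson integral of boundary data $u^*$ with $\|u^*\|_\infty\le 1$, use $\nabla u(0)=n\int_{\mathbf{S}^{n-1}}\zeta\,u^*(\zeta)\,d\sigma(\zeta)$, and then solve the linear extremal problem under the mean constraint by the bathtub principle --- is exactly the standard route to such Schwarz--Pick lemmas (it is how \cite{KV} handles $n=2$ and how \cite{KPM} proceeds), and every step you actually carry out checks out, including $\partial_{y_k}P(0,\zeta)=n\zeta_k$ and the projected density $c_n(1-s^2)^{(n-3)/2}$.

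The one step you defer, the scalar inequality, is the entire content of the theorem, so as written the proposal is not yet a proof; but it does close, and more cleanly than your ``differentiate in $c$'' plan suggests if you parametrize by the threshold $t$ rather than by $c$. Writing $\nu$ for the measure $c_n(1-s^2)^{(n-3)/2}\,ds$ and $F(t)=\nu(\{s>t\})$, the bang--bang maximizer has value $2\int_t^1 s\,d\nu=\tfrac{2c_n}{n-1}(1-t^2)^{(n-1)/2}$ and mean $c=2F(t)-1$, so the target $V\le\tfrac{1}{2}(1-c^2)$ becomes $\tfrac{c_n}{n-1}(1-t^2)^{(n-1)/2}\le F(t)\bigl(1-F(t)\bigr)$. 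The difference $G(t)$ of the two sides vanishes at $t=\pm1$ and satisfies $G'(t)=c_n(1-t^2)^{(n-3)/2}\bigl(2F(t)-1+t\bigr)$. The factor $H(t)=2F(t)-1+t$ is odd, vanishes at $0$ and $\pm1$, and has $H'(t)=1-2c_n(1-t^2)^{(n-3)/2}$; for $n\ge4$ one has $2c_n>1$, so $H'$ changes sign exactly once on $(0,1)$, forcing $H<0$ on $(0,1)$ and $H>0$ on $(-1,0)$, hence $G$ increases on $(-1,0)$, decreases on $(0,1)$, and is therefore strictly positive on $(-1,1)$ --- this is the claimed strictness. For $n=3$, $F(t)=(1-t)/2$ gives $G\equiv0$ and equality for bang--bang boundary data, consistent with strictness only for $n\ge4$. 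So your outline is correct and completable, but the paragraph you labelled ``the main obstacle'' must actually be supplied for this to count as a proof.
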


\subsection{The proof of Theorem \ref{Har-1}}
We first prove ($\mathscr{F}_{1}$)$\Rightarrow$($\mathscr{F}_{2}$).
For any fixed $x\in\mathbf{B}^{n}$, let $r=2d_{\Omega_{2}}(x)/3$.
For $y\in\overline{\mathbf{B}^{n}(x,r)}$, it follows from the
Poisson integral formula that
$$u(y)=\int_{\mathbf{S}^{n-1}}\mathbf{P}_{r}(y-x,\zeta)u(r\zeta+x)d\sigma(\zeta),$$
where
$\mathbf{P}_{r}(y-x,\zeta)=r^{n-2}\left(r^{2}-|y-x|^{2}\right)/|y-x-r\zeta|^{n}$
is the Poisson kernel for $\mathbf{B}^{n}(x,r)$. By subtracting
$u(x)$ from both sides and differentiating them with respect to
$y\in\mathbf{B}^{n}(x,r)$, we have
\be\label{Ha-ch-1}\nabla\,u(y)=\int_{\mathbf{S}^{n-1}}\nabla\mathbf{P}_{r}(y-x,\zeta)(u(r\zeta+x)-u(x))d\sigma(\zeta).\ee
If $y\in\mathbf{B}^{n}(x,3r/4)$, then there is a positive constant
$C$ depending only on $n$ such that \be\label{Ha-ch-2}
|\nabla\mathbf{P}_{r}(y-x,\zeta)|\leq\frac{C}{r}~\mbox{for}~\zeta\in\mathbf{S}^{n-1}.
\ee From the assumption, we see that there is a positive constant
$C$ such that

\be\label{Ha-ch-3} |u(r\zeta+x)-u(x)|\leq\,C\psi(r). \ee Combining
(\ref{Ha-ch-1}), (\ref{Ha-ch-2}) and (\ref{Ha-ch-3}) yields that
there is a positive constant $C$ such that

\be\label{Ha-ch-4}
|\nabla\,u(y)|\leq\,C\frac{\psi(r)}{r}=\frac{3C}{2}\frac{\psi\left(\frac{2}{3}d_{\Omega_{2}}(x)\right)}{d_{\Omega_{2}}(x)}
\leq\frac{3C}{2}\frac{\psi(d_{\Omega_{2}}(x))}{d_{\Omega_{2}}(x)}.
\ee Taking $y=x$ in (\ref{Ha-ch-4}), we have
$$|\nabla\,u(x)|\leq\frac{3C}{2}\frac{\psi(d_{\Omega_{2}}(x))}{d_{\Omega_{2}}(x)}.$$

The proof of ($\mathscr{F}_{2}$)$\Rightarrow$($\mathscr{F}_{1}$) is
similar to the proof of Theorem \ref{thm-3}. Also, if we replace
Theorem M by Theorems  N and O in the proof of Theorem \ref{thm-3},
then we can prove
($\mathscr{F}_{1}$)$\Leftrightarrow$($\mathscr{F}_{3}$). The proof
of ($\mathscr{F}_{1}$)$\Leftrightarrow$($\mathscr{F}_{4}$) is
similar to the proof
($\mathscr{F}_{1}$)$\Leftrightarrow$($\mathscr{F}_{3}$).

At last, we show
($\mathscr{F}_{1}$)$\Leftrightarrow$($\mathscr{F}_{5}$). We first
prove ($\mathscr{F}_{1}$)$\Rightarrow$($\mathscr{F}_{5}$). Let
$x\in\Omega_{2}$ be a fixed point. For $y\in\mathbf{B}^{n}(x,
d_{\Omega_{2}}(x)/2)$, it follows from (\ref{Ha-ch-4}) that there is
a positive constant $C$ such that

\be\label{Ha-ch-6} |\nabla\,u(y)|
\leq\frac{3C}{2}\frac{\psi(d_{\Omega_{2}}(x))}{d_{\Omega_{2}}(x)}.
\ee Raise both sides of (\ref{Ha-ch-6}) to the $p$-th power and
integrate over $\mathbf{B}^{n}(x, d_{\Omega_{2}}(x)/2)$. This gives
$$\tilde{I}_{u,p}(x)\leq\,\frac{3C}{2}\frac{\psi(d_{\Omega_{2}}(x))}{d_{\Omega_{2}}(x)}.$$

Next we begin to prove
($\mathscr{F}_{5}$)$\Rightarrow$($\mathscr{F}_{1}$). Since
$p\geq\frac{n-2}{n-1}$ for $n> 2$ and $p>0$ for $n=2$, for any given
point $x\in\Omega_{2}$, it follows from \cite[Theorem  A]{SW} that
$|\nabla\,u|^{p}$ is subharmonic
in $\overline{\mathbf{B}^{n}(x, d_{\Omega_{2}}(x)/2)}$. 
Then

\beqq |\nabla\,u(x)|^{p}\leq\frac{1}{|\mathbf{B}^{n}(x,
d_{\Omega_{2}}(x)/2)|}\int_{\mathbf{B}^{n}(x,
d_{\Omega_{2}}(x)/2)}|\nabla\,u(x)|^{p}d\mu(x), \eeqq which implies
that there is a positive constant $C$ such that

\be\label{Ha-ch-8}
|\nabla\,u(x)|\leq\,\tilde{I}_{u,p}(x)\leq\,C\frac{\psi(d_{\Omega_{2}}(x))}{d_{\Omega_{2}}(x)}.
\ee Since ($\mathscr{F}_{2}$) implies ($\mathscr{F}_{1}$), by
(\ref{Ha-ch-8}), we see that  ($\mathscr{F}_{5}$) implies
($\mathscr{F}_{1}$). The proof of this theorem is finished. \qed



\section{Acknowledgments}
 The research of the first author was partly supported by the National Science
Foundation of China (grant no. 12071116), the Hunan Provincial
Natural Science Foundation of China (No. 2022JJ10001), the Key
Projects of Hunan Provincial Department of Education (grant no.
21A0429),
 the Double First-Class University Project of Hunan Province
(Xiangjiaotong [2018]469),  the Science and Technology Plan Project
of Hunan Province (2016TP1020),  and the Discipline Special Research
Projects of Hengyang Normal University (XKZX21002); The research of
the second author was partly supported by JSPS KAKENHI Grant Number
JP22K03363.

\end{document}